\newtheorem{theorem}{\textbf{Theorem}}[section]
\newtheorem{lemma}{\textbf{Lemma}}[section]
\newtheorem{proposition}{\textbf{Proposition}}[section]
\newtheorem{corollary}{\textbf{Corollary}}[section]
\newtheorem{remark}{\textbf{Remark}}[section]
\newtheorem{definition}{\textbf{Definition}}[section]
\def\be{\begin{equation}}
\def\ee{\end{equation}}
\def\bea{\begin{eqnarray}}
\def\eea{\end{eqnarray}}
\def\bt{\begin{theorem}}
\def\et{\end{theorem}}
\def\bl{\begin{lemma}}
\def\el{\end{lemma}}
\def\br{\begin{remark}}
\def\er{\end{remark}}
\def\bp{\begin{proposition}}
\def\ep{\end{proposition}}
\def\bc{\begin{corollary}}
\def\ec{\end{corollary}}
\def\bd{\begin{definition}}
\def\ed{\end{definition}}
\def\non{\nonumber }
\DeclareMathOperator{\tr}{tr}
\newcommand{\A}{\mathcal A}
\newcommand{\F}{\mathcal F}
\newcommand{\T}{\mathbb{T}}
\newcommand{\E}{\mathcal E}
\newcommand{\Id}{\mathbb I}
\newcommand\defeq{\stackrel{\scriptscriptstyle \text{def}}=}
\newcommand{\RR}{\mathbb R}
\begin{document}

\title{Global strong solutions of the full Navier-Stokes and $Q$-tensor system for nematic liquid crystal flows in $2D$: existence and long-time behavior}

\author{
  Cecilia Cavaterra
  \footnote{ Dipartimento di Matematica,
Universit\`a degli Studi di Milano, Via Saldini 50, 20133 Milano,
Italy. \texttt{cecilia.cavaterra@unimi.it}
 }
 \and
  Elisabetta Rocca
  \footnote{Elisabetta Rocca, Weierstrass Institute for Applied
Analysis and Stochastics, Mohrenstr.~39, 10117 Berlin,
Germany and
Dipartimento di Matematica, Universit\`a degli Studi di Milano, Via Saldini 50, 20133 Milano, Italy.
  \texttt{elisabetta.rocca@wias-berlin.de} and \texttt{elisabetta.rocca@unimi.it}
  }
  \and
  Hao Wu
  \footnote{School of Mathematical Sciences and Shanghai
Key Laboratory for Contemporary Applied Mathematics, Fudan
University, 200433, Shanghai, China.
    \texttt{haowufd@yahoo.com}}
\and
  Xiang Xu\footnote{
  Department of Mathematics and Statistics,
  Old Dominion University, Norfolk, VA, 23529, USA.
  \texttt{x2xu@odu.edu}}
}

\date{\today}
\maketitle

\begin{abstract}
We consider a full Navier-Stokes and $Q$-tensor system for incompressible liquid crystal flows of nematic type.
In the two dimensional periodic case, we prove the existence and uniqueness of global strong solutions that are
uniformly bounded in time. This result is obtained without any smallness assumption on the physical parameter
$\xi$ that measures the ratio between tumbling and aligning effects of a shear flow exerting over the liquid
crystal directors. Moreover, we show the uniqueness of asymptotic limit for each global strong solution as time
goes to infinity and provide an uniform estimate on the convergence rate.
\smallskip

\noindent
{\bf Keywords:}~~Nematic liquid crystal flow, $Q$-tensor system, global strong solution, uniqueness of asymptotic limit.

\smallskip

\noindent
{\bf AMS (MOS) subject clas\-si\-fi\-ca\-tion:}~~35B44, 35D30, 35K45, 35Q30, 76A15.
\end{abstract}

\section{Introduction}
\setcounter{equation}{0}
In this paper, we study the global
well-posedness and long-time dynamics of a full coupled incompressible
Navier-Stokes and $Q$-tensor system due to Beris-Edwards \cite{BE94},
which models the evolution of incompressible liquid crystal flows of nematic type.
In the Landau-de Gennes theory \cite{dG93}, the local orientation and degree of ordering for the liquid crystal molecules
are characterized by a symmetric, traceless $d\times d$ tensor $Q$ (here $d$ stands for spatial dimension), which measures
the deviation of the second moment tensor from its isotropic value.
The $Q$-tensor can incorporate the biaxiality of the liquid crystal molecule alignment \cite{MZ10}.
Moreover, if $Q$ has two equal non-zero eigenvalues then it can be formally written as
$Q(x) = s(n(x)\otimes n(x)-{1\over d}\mathbb{I})$, with $s\in \mathbb{R}\setminus\{0\}$ and the vector $n: \mathbb{R}^d \to \mathbb{S}^{d-1}$ representing the averaged macroscopic molecular orientation,
so that the coupled $Q$-tensor system (see \eqref{navier-stokes}-\eqref{Q equ} below) reduces to the well-known Ericksen-Leslie system \cite{EL}.

In this paper, we restrict ourselves to the periodic case. Denote by $\mathbb{T}^{d}$ the periodic box with
period $a_i$ in the $i$-th direction and by $\mathcal{O}=(0,a_1)\times...\times(0,a_d)$ the periodic cell. Without loss of generality, we can simply set $\mathcal{O}=(0,1)^d$.
The coupled PDE system we are going to study consists of incompressible Navier-Stokes equations for the fluid velocity with highly nonlinear anisotropic
 force terms and nonlinear convection-diffusion equations of parabolic type that describe the evolution of the $Q$-tensor (see, e.g., \cite{PZ11}). More precisely, the full coupled Navier-Stokes and Q-tensor system takes the following form:
\begin{align}
u_t+u\cdot\nabla u-\nu\Delta
u+\nabla P &=\lambda \nabla\cdot(\tau+\sigma),\quad\, (x,t)\in \T^{d} \times \mathbb{R}^+,
\label{navier-stokes} \\
\nabla\cdot u &=0, \qquad\qquad \quad \ \ \ (x,t)\in \T^{d} \times \mathbb{R}^+,\label{incom}  \\
Q_t+u\cdot\nabla Q-S(\nabla{u}, Q)&=\Gamma{H}(Q),\qquad \quad\ (x,t)\in \T^{d} \times \mathbb{R}^+. \label{Q equ}
\end{align}
Here, the vector $u(x, t): \T^d \times (0, +\infty) \rightarrow \RR^d$ denotes the velocity field of the fluid and $Q(x, t): \mathbb{T}^d \times (0, +\infty) \rightarrow
S_0^{(d)}$  stands for the order parameter of liquid crystal molecules (see \eqref{S0} for the definition of the set $S_0^{(d)}$).  We assume that the system \eqref{navier-stokes}-\eqref{Q equ} is subject to the periodic boundary conditions
 \begin{equation}
 u(x+e_i,t) = u(x,t), \ \
Q(x+e_i,t)=Q(x,t), \ \ \mbox{for} \ (x,t) \in
\mathbb{T}^{d}\times\mathbb{R}^+,  \label{BC}
\end{equation}
where $\{e_i\}^d_{i=1}$ is the canonical orthonormal basis of $\mathbb{R}^d$. Moreover, the system is subject to initial spatially $1$-periodic data
 \begin{equation}
 u|_{t=0}=u_0(x)\ \ \mbox{with} \ \ \nabla\cdot u_0=0, \quad  Q|_{t=0}=Q_0(x),\quad \text{for}\ x\in \T^d.
 \label{IC}
 \end{equation}
We note that the system
preserves for all time both the symmetry and tracelessness of any solution $Q$ associated
to an initial datum with the same property \cite{PZ11,W12}.

The system \eqref{navier-stokes}-\eqref{Q equ} describes the complex interaction between the
fluid and the alignment of liquid crystal molecules: the evolution of the fluid affects the direction
and position of the molecules while changes in the alignment of molecules will also influence the
fluid velocity. The positive constants $\nu, \lambda$ and $\Gamma$ stand for the fluid
viscosity, the competition between kinetic energy and elastic potential
energy, and macroscopic elastic relaxation time (Deborah number) for
the molecular orientation field, respectively.

The free energy for the liquid crystal molecules is given by (see e.g., \cite{MZ10})
\begin{equation}\label{elastic energy}
\F(Q)\defeq\int_{\T^d}\left(\frac{L}{2}|\nabla{Q}|^2+f_B(Q)\right)\,dx.
\end{equation}
In the definition of $\mathcal{F}(Q)$, the gradient term corresponds to the elastic part of the free energy and $L>0$ is the elastic constant. Here, we simply use the so-called one constant approximation of the Oseen-Frank energy (cf. \cite{BM10}).  On the other hand, the bulk part $f_B(Q)$ of Landau-de Gennes type takes the following form
$$
f_B(Q)=\frac{a}{2}\tr(Q^2)-\frac{b}{3}\mathrm{tr}(Q^3)+\frac{c}{4}\tr^2(Q^2),
$$
where $a, b, c \in \RR$ are material-dependent and
temperature-dependent coefficients that are assumed to be constants
here. In particular, we assume that $$c>0,$$ which is necessary from
the modeling point of view to guarantee that the free energy $\F(Q)$
(and thus the total energy $\mathcal{E}(t)$ of the coupled system \eqref{navier-stokes}-\eqref{Q equ}) is bounded from
below (see \cite{M10, MZ10}).

The tensor $H=H(Q)$ in equation \eqref{Q equ} is related to the variational derivative of the free energy $\F(Q)$ with respect to $Q$ (under the constraint that $Q$ is both symmetric and traceless) such that
\begin{equation}
H(Q)\defeq -\frac{\partial \F(Q)}{\partial Q}=L\Delta{Q}-aQ+b\left(Q^2-\frac1d\mathrm{tr}(Q^2)\mathbb{I}\right)-cQ\tr(Q^2),\label{def of H}
\end{equation}
where $\mathbb{I} \in
\mathbb{R}^{d\times d}$ stands for the identity matrix. Then the matrix valued function $S(\nabla{u}, Q)$ in \eqref{Q equ} takes the following form
\begin{equation}
S(\nabla{u}, Q)\defeq (\xi D+\Omega)\big(Q+\frac1d \Id
\big)+\big(Q+\frac1d \Id \big)(\xi D-\Omega)-2\xi\big(Q+\frac1d \Id
\big)\tr(Q\nabla{u}),\label{S1}
\end{equation}
where $$D=\frac{\nabla u+\nabla^Tu}{2},\quad \Omega=\frac{\nabla
u-\nabla^Tu}{2}$$ represent the symmetric and skew-symmetric parts of
the rate of strain tensor, respectively. We note that $S(\nabla{u}, Q)$
describes the rotating and stretching effects on the order parameter
$Q$ due to the fluid, as the liquid crystal molecules can be tumbled
and aligned by the flow. In particular, the constant parameter $\xi\in\mathbb{R}$
in \eqref{S1} depends on the molecular shapes of the liquid crystal
and it is a measure of the ratio between the tumbling and the
aligning effect that a shear flow exerts on the liquid crystal
director.

Concerning the stress tensors $\tau$ and $\sigma$ on the right-hand side of Navier-Stokes equations \eqref{navier-stokes}, the symmetric part $\tau$
reads
\begin{equation}\label{symmetic tensor}
\tau\defeq-\xi\big(Q+\frac{1}{d}\Id\big)H(Q)-\xi{H}(Q)\big(Q+\frac{1}{d}\Id\big)+2\xi\big(Q+\frac{1}{d}\Id\big)\tr(QH(Q))
-L\nabla{Q}\odot\nabla{Q},
\end{equation}
in which the last term is understood as
$(\nabla{Q}\odot\nabla{Q})_{ij}=\sum_{k,l=1}^{d}\nabla_iQ_{kl}\nabla_jQ_{kl}$.
On the other hand, the skew-symmetric part $\sigma$ is given by
\begin{equation}\label{skew-symmetric tensor}
\sigma\defeq QH(Q)-H(Q)Q.
\end{equation}

We recall some related results in the literature.
The coupled Beris-Edwards system \eqref{navier-stokes}-\eqref{Q equ} has been recently studied by several authors.
For the simpler case $\xi=0$, which means that the molecules only tumble in a shear flow, but they are not
aligned by the flow (cf. \cite{PZ12}), the first contribution is due to \cite{PZ12}, in which  the authors proved the
existence of global weak solutions to the Cauchy problem in $\mathbb{R}^d$ with $d=2, 3$,
and they obtained higher global regularity as well as the
weak-strong uniqueness for $d=2$. Asymptotic behavior of the
Cauchy problem in $\mathbb{R}^3$ with $\xi=0$  is recently discussed in \cite{DFRSS14}.
Besides, initial boundary value problems subject to various boundary conditions for $d=2, 3$
have been investigated by several authors in \cite{ADL15, GR14, GR15} under the assumption $\xi=0$. In these works,
they proved the existence of global weak solutions, existence and
uniqueness of local strong solutions as well as some regularity criteria etc. For the full Navier-Stokes and $Q$-tensor system \eqref{navier-stokes}-\eqref{Q equ}
with general $\xi\in \mathbb{R}$,  existence of global weak solutions for the Cauchy
problem in $\mathbb{R}^d$ with $d=2, 3$ was established in
\cite{PZ11} for sufficiently small $|\xi|$, while the uniqueness of
weak solutions in the $2D$ setting is given quite
recently in \cite{DAZ}. On the other hand, in \cite{ADL14} the authors proved existence of global weak solutions
and local well-posedness with higher time-regularity for the initial boundary value problem
subject to inhomogeneous mixed Dirichlet/Neumann boundary conditions.

Some recent progresses have also been made on the mathematical analysis of certain modified versions
of the Beris-Edwards system in terms of its free energy. For instance, in \cite{W12}, the regular bulk
potential in \eqref{elastic energy} is replaced by a singular
potential of Ball-Majumdar type (cf. \cite{BM10}) that ensures the $Q$-tensor always
stays in the ``physical" region. Then, in the co-rotational regime $\xi=0$,
the author proved the existence of global weak solutions
for $d=2, 3$ and for $d=2$.  Moreover, he obtained the existence and uniqueness of global regular solutions.
In \cite{FRSZ14, FRSZ15}, non-isothermal variants of the Beris-Edwards system were derived and the authors proved
existence of global weak solutions in the case of a singular
potential under periodic boundary conditions for general $\xi$ and $d=3$.
In \cite{HD14}, the authors considered a general Beris-Edwards system where the Dirichlet type elastic functional as in
\eqref{elastic energy} is replaced by three quadratic functionals. For the Cauchy problem in $\RR^3$,
they proved existence of global weak solutions as well as the existence of a unique global strong solution provided that
the fluid viscosity is sufficiently large.
We also refer the interested readers to \cite{CX15, IXZ14} for well-posedness results regarding the $Q$-tensor gradient
flow generated by the general Landau-de Gennes energy with a cubic term (but without fluid coupling).

It is worth mentioning that a rigorous derivation from the
Beris-Edwards system (with general free energy and arbitrary $\xi$) to the classical Ericksen-Leslie system is recently given in
\cite{WZZ15} by using the Hilbert expansion method. We refer to \cite{CR, CRW, HLW14, LL01, WZZ14, WXL13} and the references therein for mathematical analysis
of the general Ericksen-Leslie system either under the unit length constraint of the molecule director or with Ginzburg-Landau approximation of the free energy.

In this paper, we are interested in the global well-posedness and long-time
behavior of the full Navier-Stokes and $Q$-tensor system \eqref{navier-stokes}-\eqref{IC}
in the two dimensional periodic setting. The main difficulty in handling
the current full coupled system with $\xi\in \mathbb{R}$ is due to the fact
that for $\xi\neq 0$ the system \eqref{navier-stokes}-\eqref{IC} no longer enjoys
the maximum principle for the $Q$-equation \eqref{Q equ}, which is
instead true in case $\xi=0$ (see e.g., \cite[Theorem 3]{GR15}). Due
to the loss of control on $Q$ in $L^\infty(0, T; L^\infty)$, extra
difficulties arise in obtaining estimates for highly nonlinear terms
of the system (see Proposition \ref{high2D}).
We note that a similar problem was encountered in \cite{PZ11} to prove the existence of global strong
solutions of the Cauchy problem in $\mathbb{R}^2$ (assuming that $|\xi|$ is sufficiently small). In order to get such highly
nonlinearities under control, the authors of \cite{PZ11} chose to work within the technical
Littlewood-Paley approach and then made use of the sharp
logarithmic Sobolev embedding of $H^{1+\epsilon}$ in $L^\infty$ (cf. \cite{BG}) together with the precise growth
of the constant of the Sobolev embedding of $H^1$ in $L^p$ for any $p>1$ (cf. \cite{CX}), and an optimal choice of
the non-constant index $p$ of interpolation depending on the norm of the solution. Then they established the existence of a unique global strong solution $(u, Q)$ to
the Cauchy problem in $\mathbb{R}^2$, whose $H^s\times H^{1+s}$-norm ($s>0$) may increases at most
quadruply exponential in time.

We note that in \cite{PZ11}, the smallness of the parameter $|\xi|$ is required because of the unboundedness of the whole plane $\mathbb{R}^2$,
which however can be removed in our current periodic setting (see \eqref{below}). In the periodic domain $\T^2$,
the first main result we are able to prove is the existence and uniqueness of global strong solutions $(u, Q)$ for arbitrary $\xi\in \mathbb{R}$,
whose $H^1\times H^2$-norm is indeed uniformly bounded in time (see Theorem \ref{strong2d}).
To achieve this goal, we use the idea of \cite{LL95} for the simplified liquid crystal system together with the interpolation techniques in \cite{PZ11} to derive a
suitable higher-order differential inequality for a specific quantity $\mathcal{A}(t)$ (see \eqref{def of A} for its definition),
which is essentially contained in the energy dissipation of the system \eqref{navier-stokes}-\eqref{IC} and is integrable with respect to time on the unbounded half line $\mathbb{R}^+$ such that $\mathcal{A}(t)\in L^1(0,+\infty)$ (see Proposition \ref{high2D}). The resulting higher-order energy inequality \eqref{hiA} has a delicate double-logarithmic type structure and it plays a crucial role in three aspects of the subsequent proofs:
(1) it yields uniform-in-time estimates on $H^1\times H^2$-norm of the global strong solution $(u,Q)$ provided that $(u_0,Q_0)\in H^1\times H^2$ (see \eqref{highN1}); (2) it implies the decay of $\mathcal{A}(t)$ to zero as $t\to+\infty$ and thus gives a characterization of the $\omega$-limit set of the evolution system \eqref{navier-stokes}-\eqref{IC} (see Lemma \ref{lemma on decay property}); (3) it helps to obtain an uniform estimate on the rate of convergence to equilibrium for the global strong solution (see \eqref{ddecaA}).

Our second main result is about the long-time behavior of global
strong solutions obtained in Theorem \ref{strong2d} (see Theorem \ref{long-time}).
 The problem whether a bounded global solution of a nonlinear
evolution equation will converge to a single equilibrium as
time tends to infinity is of great importance. This issue is nontrivial since the
structure of the equilibrium set may form a continuum for many
dynamic systems in higher space dimensions. For instance, under current
periodic boundary conditions, it is
expected that the dimension of the equilibrium set of our problem
\eqref{navier-stokes}-\eqref{IC} is at least $2$  due to the simple fact that a shift in
each variable produces another steady state. Hence, it is interesting
to determine whether a trajectory defined by a global strong solution
will converge to a single
steady state or not. To this end, we first construct a specific gradient inequality
for tensor valued functions subject to
periodic boundary conditions (see Lemma \ref{lemma on LS inequality}), then we apply the \L
ojasiewicz-Simon approach (see \cite{S83}  and also \cite{FS})
to achieve our aim. This approach turns out to be a powerful tool in
the study of long-time dynamics of evolution equations, and we refer interested
readers to \cite{H06} and the references therein for various
applications.

The rest of this paper is organized as follows. In Section 2 we
introduce the notations as well as some preliminary results,
and then state the main results of this paper.
Section 3 is devoted to the derivation of a specific higher-order differential inequality
that will be crucial in the subsequent proof.
In Section 4, we prove the existence and uniqueness of global strong solutions to the
Beris-Edward system \eqref{navier-stokes}-\eqref{IC}.
In Section 5 we demonstrate that every global strong solution will
converge to a single equilibrium and provide a uniform estimate on the convergence rate.
Some detailed calculations will be presented in the Appendix Section.

\section{Preliminaries and Main Results}
\setcounter{equation}{0}

\subsection{Notations}
Let $X$ be a real Banach space with norm $\|\cdot\|_X$ and $X^*$ be its dual space. By $<\cdot,\cdot>_{X^*,X}$ we indicate the duality product
between $X$ and $X^*$.
We denote by $L^p(\T^d, M)$, $W^{m,p}(\T^d, M)$ the usual Lebesgue and Sobolev spaces defined on the torus $\T^d$ for $M$-valued functions
(e.g., $M=\mathbb{R}$, $M=\mathbb{R}^d$ or $M=\mathbb{R}^d\times \mathbb{R}^d$) that are in $L^p_{loc}(\mathbb{R}^d)$ or $W^{m,p}_{loc}(\mathbb{R}^d)$
and periodic in $\T^d$, with norms denoted by $\|\cdot\|_{L^p}$, $\|\cdot\|_{W^{m,p}}$, respectively.
For $p=2$, we simply denote $H^m(\T^d)= W^{m,2}(\T^d)$ with norm $\|\cdot \|_{H^m}$. In particular for $m=0$, we denote $H^0(\T^d)=L^2(\T^d)$ and the inner product on $L^2(\T^d)$ will be denoted by $(\cdot, \cdot)_{L^2}$.
For simplicity, we shall not distinguish functional spaces when scalar-valued, vector-valued or matrix-valued functions are involved
if they are clear from the context.

Einstein summation convention will be used throughout this paper.
For arbitrary vectors $u, v\in\RR^d$, we denote $u\cdot v=u_i v_i$ the inner product in $\RR^d$.
For any matrix $Q\in \RR^{d\times d}$, we use the Frobenius norm $|Q| =\sqrt{\tr(Q^2)}=\sqrt{Q_{ij}Q_{ij}}$.
Let $S_0^{(d)}$ denote the space of symmetric traceless matrices with spatial dimension $d$,
\begin{equation}
  S_0^{(d)}\defeq\Big\{Q \in \RR^{d \times d}\ | \ Q_{ij}=Q_{ji}, \, \tr(Q)=0, \, i,j=1,...,d  \Big\}.\label{S0}
\end{equation}
Then for matrices $A, B \in S_0^{(d)}$, we denote $A: B=\tr(AB)$.
Concerning the norms for derivatives, we
denote $|\nabla Q|^2(x)=\nabla_kQ_{ij}(x)\nabla_kQ_{ij}(x)$ and
$|\Delta Q|^2(x)=\Delta{Q}_{ij}(x)\Delta{Q}_{ij}(x)$. Sobolev spaces
for $Q$-tensors will be defined in terms of the above norms. For instance,
$L^2(\T^d, S_0^{(d)}) = \{Q: \T^d\rightarrow S_0^{(d)},
\int_{\T^d}|Q(x)|^2\,dx < \infty \}$ and  $H^1(\T^d, S_0^{(d)}) =
\{Q: \T^d\rightarrow S_0^{(d)}, \int_{\T^d}|\nabla
Q(x)|^2+|Q(x)|^2\,dx < \infty \}$ etc. Concerning the divergence of a
$d\times d$ differentiable matrix-valued function $\sigma=(\sigma_{ij})$, its
$i$-th component is given by
$(\nabla\cdot\sigma)_i=\nabla_j\sigma_{ij}$, $1\leq i, j\leq d$.

For any normed space $X$, the subspace of functions in $X$ with zero-mean will be denoted by $\dot X$,
that is $\dot X=\left\{v\in X: \int_{\T^d} v \,dx=0\right\}$. Then we recall the well established functional settings
for periodic solutions to Navier-Stokes equations (see e.g., \cite{Te}):
\begin{align*}
\mathbf{H}&=\{v \in \dot{L}^2(\T^d, \RR^d) ,\ \nabla\cdot v=0\},\\
\mathbf{V}&=\Big\{v\in \dot{H}^1(\T^d,\RR^d), \; \nabla\cdot v=0\Big\},\\
\mathbf{V}'&=\text{the\ dual space of\ } \mathbf{V}.
\end{align*}
In the spatial periodic setting, one can define a mapping $\mathcal{S}$ associated
with the Stokes problem:
 \begin{equation}
 \mathcal{S} u=-\Delta u, \quad  \forall\, u\in D(\mathcal{S})\defeq\{u\in \mathbf{H}, \, \Delta u\in \mathbf{H}\}=\dot H^2(\T^d, \RR^d)\cap
 \mathbf{H}.\label{stokes}
\end{equation}
 The operator $\mathcal{S}$ can be seen as an unbounded
positive linear self-adjoint operator on $\mathbf{H}$. If $D(\mathcal{S})$ is endowed
with the norm induced by $\dot{H}^2(\T^d)$, then $\mathcal{S}$ becomes an
isomorphism from $D(\mathcal{S})$ onto $\mathbf{H}$. For detailed properties of $\mathcal{S}$, we refer to \cite{Te}.

We denote by $C$ a generic constant that may depend on $\nu$,
$\Gamma$, $\lambda$, $\xi$, $L$, $a$, $b$, $c$, $\T^d$ and the
initial data $(u_0, Q_0)$, whose value is allowed to vary on
occurrence. Specific dependence will be pointed out explicitly
if necessary.

\subsection{Basic energy law and global weak solutions}
We first present some basic properties of the Navier-Stokes and $Q$-tensor system
\eqref{navier-stokes}-\eqref{IC} that are valid in both two and three dimensional cases.

The total energy of the system \eqref{navier-stokes}-\eqref{IC} consists of two parts: the kinetic
energy for the velocity field $u$ and the free energy $\F(Q)$ (see \eqref{elastic energy}). More
precisely, we have
\begin{equation}\label{total energy}
\E(t)\defeq \frac12\int_{\T^d} |u|^2(x,t)\,dx+\lambda \F(Q(t)).
\end{equation}
By the same argument as in \cite[Proposition 1]{PZ11} for the whole space case in $\RR^d$ (see also \cite[Proposition 2.1]{DAZ}),
we can derive the following basic energy law:

\begin{lemma}[Basic energy law] \label{prop on basic energy law}
Suppose $d=2, 3$ and $(u, Q)$ be a smooth solution to the problem \eqref{navier-stokes}-\eqref{IC}. Then we have
\begin{equation}\label{basic energy law}
\frac{d}{dt}\E(t)=-\nu\int_{\T^d} |\nabla{u}|^2\,dx-\lambda \Gamma\int_{\T^d} |H(Q)|^2\,dx\leq 0,\quad \forall\, t>0.
\end{equation}
\end{lemma}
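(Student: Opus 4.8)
The plan is to establish the basic energy law by testing each equation against a suitably chosen quantity and then carefully tracking how the nonlinear coupling terms cancel. Concretely, I would multiply the Navier--Stokes equation \eqref{navier-stokes} by $u$ and integrate over $\T^d$, multiply the $Q$-equation \eqref{Q equ} by $\lambda H(Q)$ and integrate, and then add the two resulting identities. The first term gives $\frac{d}{dt}\frac12\int_{\T^d}|u|^2\,dx$ after integrating the time derivative, while the convection term $\int_{\T^d}(u\cdot\nabla u)\cdot u\,dx$ vanishes by the incompressibility constraint \eqref{incom} and the periodic boundary conditions \eqref{BC} (integration by parts produces $-\frac12\int_{\T^d}(\nabla\cdot u)|u|^2\,dx=0$). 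Likewise the pressure term $\int_{\T^d}\nabla P\cdot u\,dx=-\int_{\T^d}P\,\nabla\cdot u\,dx=0$, and the viscosity term yields $-\nu\int_{\T^d}|\nabla u|^2\,dx$, which is one of the two dissipation terms claimed in \eqref{basic energy law}.

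For the $Q$-equation, the key observation is that $H(Q)=-\partial\F(Q)/\partial Q$, as recorded in \eqref{def of H}. Testing $Q_t$ against $\lambda H(Q)$ therefore produces, after using the chain rule and the symmetry/tracelessness preservation noted in the text, precisely $-\lambda\frac{d}{dt}\F(Q)$; here one must integrate the elastic term $L\int_{\T^d}\Delta Q:Q_t\,dx$ by parts to recover $-\frac{L}{2}\frac{d}{dt}\int_{\T^d}|\nabla Q|^2\,dx$ and verify that the bulk terms combine into $\frac{d}{dt}\int_{\T^d}f_B(Q)\,dx$. The diffusion term on the right-hand side contributes $\lambda\Gamma\int_{\T^d}|H(Q)|^2\,dx$ with the opposite sign once moved to the left, giving the second dissipation term. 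Summing the two tested equations then assembles $\frac{d}{dt}\E(t)$ on the left by the definition \eqref{total energy}.

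The main obstacle, and the heart of the computation, is showing that all the remaining coupling terms cancel exactly: the stress contributions $\lambda\int_{\T^d}\nabla\cdot(\tau+\sigma)\cdot u\,dx$ coming from the right-hand side of \eqref{navier-stokes}, and the stretching/rotation term $-\int_{\T^d}S(\nabla u,Q):\lambda H(Q)\,dx$ coming from the left-hand side of \eqref{Q equ}. One integrates the stress term by parts to move the divergence onto $\nabla u$, writing it as $-\lambda\int_{\T^d}(\tau+\sigma):\nabla u\,dx$, and then must check that this expression, together with the $S$-term, sums to zero. This requires splitting $\nabla u = D+\Omega$ and exploiting the explicit algebraic structures of $\tau$ in \eqref{symmetic tensor}, $\sigma$ in \eqref{skew-symmetric tensor}, and $S$ in \eqref{S1}; the symmetric part of $S$ pairs against $\tau$ and the antisymmetric part against $\sigma$, with the $\xi$-dependent stretching terms and the trace corrections $\tr(Q\nabla u)$ and $\tr(QH(Q))$ matching term by term. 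The Frobenius-inner-product identities $A:BC=\tr(ABC)$ and the symmetry of $Q$ and $H(Q)$ are what force these cancellations; the $-L\nabla Q\odot\nabla Q$ piece of $\tau$ is precisely what cancels the transport term $\int_{\T^d}(u\cdot\nabla Q):\lambda H(Q)\,dx$ after integration by parts.

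Since this algebraic verification is lengthy but mechanical, I would carry out the delicate matrix manipulations in detail in the Appendix and invoke them here, exactly as the authors signal by citing the parallel computation in \cite[Proposition 1]{PZ11} and \cite[Proposition 2.1]{DAZ}. The inequality $\frac{d}{dt}\E(t)\le 0$ then follows immediately from the nonnegativity of $\nu\int_{\T^d}|\nabla u|^2\,dx$ and $\lambda\Gamma\int_{\T^d}|H(Q)|^2\,dx$, using $\nu,\lambda,\Gamma>0$.
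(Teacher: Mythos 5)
Your proposal is correct and follows exactly the approach the paper relies on: the paper gives no proof of Lemma \ref{prop on basic energy law} itself but cites \cite[Proposition 1]{PZ11} and \cite[Proposition 2.1]{DAZ}, and those proofs are precisely your computation — test \eqref{navier-stokes} with $u$, test \eqref{Q equ} with $\lambda H(Q)$ (rearranged so that $+\lambda\frac{d}{dt}\F(Q)$ and $+\lambda\Gamma\|H(Q)\|_{L^2}^2$ appear on the left), and verify that the stress terms $-\lambda\int_{\T^d}(\tau+\sigma):\nabla u\,dx$, the term $-\lambda\int_{\T^d}S(\nabla u,Q):H\,dx$, and the transport term cancel via the symmetric/antisymmetric pairing and the $\nabla Q\odot\nabla Q$ identity you describe. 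The only cosmetic point is the sign bookkeeping: testing by $\lambda H(Q)$ literally yields $-\lambda\frac{d}{dt}\F(Q)$ on the left, so one must multiply through by $-1$ (equivalently test by $-\lambda H(Q)$) before summing with the velocity identity, which is what your rearrangement implicitly does.
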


Lemma \ref{prop on basic energy law} reflects the energy dissipation
of the liquid crystal flow and indicates that the energy functional $\mathcal{E}(t)$ which is bounded from below since $c>0$,
serves as a Lyapunov functional for the system \eqref{navier-stokes}-\eqref{IC}. This property provides necessary uniform estimates
for further mathematical analysis of the PDE system \eqref{navier-stokes}-\eqref{IC}, for instance, the existence of global weak solutions.

\begin{lemma}\label{prop on lower order norm}
Suppose $d=2, 3$. Let $(u, Q)$ be a smooth solution to the problem \eqref{navier-stokes}-\eqref{IC}. Then we have
\begin{equation}\label{lower order bound}
\|u(t)\|_{L^2}+\|Q(t)\|_{H^1}\leq C, \qquad \forall\, t>0,
\end{equation}
where the constant $C>0$ depends on $\|u_0\|, \|Q_0\|_{H^1}, L, \lambda, a, b, c$ and
$\T^d$. Moreover, it holds
\begin{equation}
\int_0^T\int_{\T^d} |\nabla{u}(x,t)|^2+ |\Delta{Q}(x, t)|^2\,dxdt < C_T,\qquad\forall\, T>0,\label{intes1}
\end{equation}
where $C_T>0$ may further depend on $\nu$, $\Gamma$ and $T$.
\end{lemma}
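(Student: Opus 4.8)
The plan is to extract everything from the basic energy law of Lemma~\ref{prop on basic energy law} together with the coercivity of the bulk potential guaranteed by the assumption $c>0$. Integrating \eqref{basic energy law} in time gives both the monotonicity $\E(t)\leq\E(0)$ for all $t\geq0$ and the dissipation identity
\begin{equation*}
\E(T)+\nu\int_0^T\|\nabla u\|_{L^2}^2\,dt+\lambda\Gamma\int_0^T\|H(Q)\|_{L^2}^2\,dt=\E(0),\qquad\forall\,T>0.
\end{equation*}
Everything then reduces to converting these into pointwise-in-time and space-time bounds, and the only structural input needed is that $c>0$ makes $f_B$ coercive.

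First I would establish the pointwise lower bound for $f_B$. Writing $\tr(Q^2)=|Q|^2$ and using $|\tr(Q^3)|\leq|Q|^3$ (valid for symmetric $Q$ by comparing $\ell^3$ and $\ell^2$ norms of the eigenvalues), the scalar polynomial $\frac{a}{2}|Q|^2-\frac{|b|}{3}|Q|^3+\frac{c}{4}|Q|^4$ has positive leading coefficient, so Young's inequality yields $f_B(Q)\geq\frac{c}{8}|Q|^4-C_0$ for some $C_0=C_0(a,b,c,d)$. Combined with $\E(t)\leq\E(0)$ and the definitions \eqref{total energy}, \eqref{elastic energy}, this simultaneously bounds $\|u(t)\|_{L^2}$, $\|\nabla Q(t)\|_{L^2}$, and, crucially, $\|Q(t)\|_{L^4}$, all uniformly in $t$. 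On the bounded domain $\T^d$ Hölder's inequality upgrades the $L^4$ bound to an $L^2$ bound on $Q$, and together with the gradient bound this gives \eqref{lower order bound}.

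For the space-time estimate \eqref{intes1}, the same coercivity furnishes $\E(T)\geq-\lambda C_0|\T^d|$, so the dissipation identity shows that $\int_0^T\|\nabla u\|_{L^2}^2\,dt$ and $\int_0^T\|H(Q)\|_{L^2}^2\,dt$ are bounded, in fact uniformly in $T$. It remains to trade the bound on $H(Q)$ for one on $\Delta Q$. Inverting \eqref{def of H} gives
\begin{equation*}
L\Delta Q=H(Q)+aQ-b\Big(Q^2-\tfrac1d\tr(Q^2)\Id\Big)+cQ\tr(Q^2),
\end{equation*}
so I would estimate $\|\Delta Q\|_{L^2}^2$ by $\|H(Q)\|_{L^2}^2$ plus the lower-order contributions $\|Q\|_{L^2}^2$, $\|Q\|_{L^4}^4$, and $\|Q\|_{L^6}^6$. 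The first two are already controlled by \eqref{lower order bound}; for the cubic term I would invoke the Sobolev embedding $H^1\hookrightarrow L^6$, valid for $d=2,3$, to bound $\|Q\|_{L^6}$ by $\|Q\|_{H^1}$. Integrating in time yields $\int_0^T\|\Delta Q\|_{L^2}^2\,dt\leq C\int_0^T\|H(Q)\|_{L^2}^2\,dt+C\,T$, the final $T$-term being exactly the source of the $T$-dependence of $C_T$.

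The step deserving the most care — rather than a genuine obstacle — is the coercivity argument: one must verify that the single sign condition $c>0$ (with $a,b$ of arbitrary sign) suffices to bound $f_B$ below and to control $\|Q\|_{L^4}$, since this one fact delivers at once the uniform $H^1$ bound on $Q$, the lower bound on $\E$, and the control of the nonlinear terms arising in the inversion of $H(Q)$. The remaining manipulations are standard integration of the energy law and Sobolev embeddings in dimension $d\leq3$.
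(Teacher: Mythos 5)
Your proposal is correct and follows essentially the same route as the paper: integrate the basic energy law, use the $c>0$ coercivity of $f_B$ (via Young's inequality) to bound $\mathcal{E}$ from below and extract the uniform $L^2\times H^1$ bounds, then invert \eqref{def of H} and control the bulk terms through $\|Q\|_{L^2}^2+\|Q\|_{L^4}^4+\|Q\|_{L^6}^6$ and Sobolev embedding to trade $\|H(Q)\|_{L^2}$ for $\|\Delta Q\|_{L^2}$. The only cosmetic difference is that you prove the coercivity bound $f_B(Q)\geq \frac{c}{8}|Q|^4-C_0$ directly from the eigenvalue inequality $|\tr(Q^3)|\leq|Q|^3$, whereas the paper invokes an inequality from \cite{PZ11} with a large constant $M$; both are the same absorption argument.
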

\begin{proof}
It follows from Lemma \ref{prop on basic energy law}
that
\begin{equation}
  \mathcal{E}(t) +\int_0^t \int_{\T^d} \nu |\nabla u|^2 + \lambda\Gamma |H(Q)|^2 dxdt= \mathcal{E}(0), \quad  \forall\, t>0.\label{es1}
\end{equation}
We easily infer from the Sobolev embedding theorem ($d=2,3$) that
$$\mathcal{E}(0)=\frac12\|u_0\|_{L^2}^2+\lambda \F(Q_0)\leq C(\|u_0\|_{L^2}, \|Q_0\|_{H^1}).$$
On the other hand, there exists a constant $M=M(a,b,c)>0$ large enough (see \cite[(18)]{PZ11}) such that
$$\frac{M}{2}\mathrm{tr}(Q^2)+\frac{c}{8}\mathrm{tr}^2(Q^2)\leq \left(M+\frac{a}{2}\right)\mathrm{tr}(Q^2)-\frac{b}{3}\mathrm{tr}(Q^3)
+\frac{c}{4}\mathrm{tr}^2(Q^2),$$
which combined with the Young's inequality and the fact $c>0$  yields that
\begin{eqnarray}
\frac{a}{2}\mathrm{tr}(Q^2)-\frac{b}{3}\mathrm{tr}(Q^3)+\frac{c}{4}\mathrm{tr}^2(Q^2)&\geq & -\frac{M}{2}\mathrm{tr}(Q^2)+\frac{c}{8}\mathrm{tr}^2(Q^2)\nonumber\\
&\geq& \frac{1}{2}\mathrm{tr}(Q^2)+ \frac{c}{16}\mathrm{tr}^2(Q^2)-\frac{(M+1)^2}{c}.\nonumber
\end{eqnarray}
Then we have following estimate
\begin{align}
& \frac12\|u(t)\|_{L^2}^2+\frac{\lambda L}{2}\|\nabla{Q}(t)\|_{L^2}^2+\lambda \int_{\T^d} \frac{1}{2}\mathrm{tr}(Q^2(t))+\frac{c}{16}\tr^2(Q^2(t))\,dx\nonumber\\
\leq&\ \frac12\|u(t)\|^2+\lambda \F(Q(t))+\frac{\lambda (M+1)^2}{c}|\T^d|,\label{below}
\end{align}
where $|\T^d|$ stands for the Lebesgue measure of $\T^d$. As a consequence, we can deduce that $\mathcal{E}(t)$ is uniformly bounded from below
by a generic constant only depending on the coefficients $a, b, c, \lambda$ and the size of periodic domain.
Hence, the estimate \eqref{lower order bound} easily follows from \eqref{es1} and \eqref{below}.

Next, we infer from \eqref{lower order bound}, \eqref{es1}, \eqref{below} and the Sobolev embedding theorem $(d=2,3$) that
\begin{align*}
&\int_0^t\int_{\T^d}\big|L\Delta{Q}\big|^2\,dxd\tau \\
\leq&\ 2\int_0^t\int_{\T^d}\big|H(Q)\big|^2\,dxd\tau
+2\int_0^t\int_{\T^d}\left|-aQ+b\left(Q^2-\frac13\mathrm{tr}(Q^2)\mathbb{I}\right)-cQ\tr(Q^2)\right|^2\,dxd\tau \\
\leq&\  2\int_0^t\int_{\T^d}\big|H(Q)\big|^2\,dxd\tau+C \int_0^t \left(\|Q(\tau)\|_{L^2}^2+ \|Q(\tau)\|_{L^4}^4+\|Q(\tau)\|_{L^6}^6\right)d\tau \\
\leq&\  \frac{2}{\lambda\Gamma}\big(\mathcal{E}(0)-\mathcal{E}(t)\big)+Ct\\
\leq&\  C(1+t), \quad \forall\, t>0,
\end{align*}
where $C$ depends on $\|u_0\|, \|Q_0\|_{H^1}, \Gamma, L, \lambda, a, b, c$ and
$\T^d$. Then the conclusion \eqref{intes1}  follows from the above estimate and \eqref{es1}.
\end{proof}
\begin{remark}
For the full Navier-Stokes and Q-tensor system \eqref{navier-stokes}-\eqref{IC} with general $\xi\in \mathbb{R}$,
existence of weak solutions for the Cauchy problem in the whole space $\mathbb{R}^d$ with $d=2 ,3$ is
established in \cite{PZ11} for sufficiently small $\xi$. On the other hand, for the initial boundary value problem
in a bounded domain in $\RR^d$, in \cite{ADL14} existence of global weak solutions under inhomogeneous mixed Dirichlet/Neumann boundary conditions
were obtained without any restriction on $\xi$.
The smallness for $\xi$ can be removed for the bounded domain case because one can use a generic constant depending on the domain size
to get a priori $L^2$ estimates for the $Q$-tensor (see \eqref{below}).
\end{remark}

Since we are working with the periodic domain, the following result can be easily  proved in a way similar to \cite{ADL14}:

\begin{proposition}[Existence of global weak solutions] \label{weakexe}
Suppose that $d=2,3$ and $\xi\in \RR$. For any initial data $(u_0, Q_0)\in \mathbf{H}\times H^1(\T^d, S_0^{(d)})$, the problem \eqref{navier-stokes}-\eqref{IC} possesses at least one global-in-time weak solution $(u,Q)$ such that
\begin{align}
&u \in L^\infty(0, T; \mathbf{H}) \cap L^2(0, T; \mathbf{V}),  \\
&Q \in L^\infty(0, T; H^1(\T^d,S_0^{(d)})) \cap L^2(0, T; H^2(\T^d,S_0^{(d)})).
\end{align}
Moreover, for a.e. $t\in (0, T)$, the following energy inequality holds:
\begin{equation}
  \mathcal{E}(t) +\int_0^t \int_{\T^d} \nu |\nabla u|^2 + \lambda\Gamma |H(Q)|^2 dxdt\leq \mathcal{E}(0).\label{BELin}
\end{equation}
\end{proposition}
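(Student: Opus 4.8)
The plan is to establish the existence of global weak solutions via a Galerkin approximation scheme, following the strategy outlined for the bounded-domain problem in \cite{ADL14} but adapted to the periodic torus $\T^d$. First I would fix finite-dimensional approximation spaces: for the velocity $u$ I would use the eigenfunctions of the Stokes operator $\mathcal{S}$ defined in \eqref{stokes}, which form an orthonormal basis of $\mathbf{H}$ respecting the divergence-free constraint, and for $Q$ I would use the eigenfunctions of $-\Delta$ on $\T^d$ acting componentwise within $S_0^{(d)}$ (equivalently, a Fourier basis, which automatically preserves symmetry and tracelessness since these are linear pointwise constraints). Projecting \eqref{navier-stokes} and \eqref{Q equ} onto these spaces yields a system of ODEs for the Galerkin coefficients; local-in-time existence follows from the Cauchy-Lipschitz theorem because all the nonlinear terms in $S(\nabla u,Q)$, $H(Q)$, $\tau$ and $\sigma$ are polynomial (hence locally Lipschitz) in the coefficients.

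The second step is to derive uniform a priori estimates for the approximate solutions $(u_n, Q_n)$ independent of the approximation parameter $n$. The crucial point is that the Galerkin scheme is compatible with the energy structure: testing the approximate equations against $u_n$ and against $\Gamma H(Q_n)$ respectively and exploiting the precise algebraic cancellation between the stress tensors $\tau, \sigma$ and the term $S(\nabla u_n, Q_n)$, I would reproduce the basic energy law of Lemma \ref{prop on basic energy law} at the discrete level, obtaining the discrete analogue of \eqref{es1}. Combined with the lower bound \eqref{below} on the free energy (valid since $c>0$), this gives, exactly as in Lemma \ref{prop on lower order norm}, the uniform bounds
\begin{equation}
\|u_n\|_{L^\infty(0,T;\mathbf{H})}+\|Q_n\|_{L^\infty(0,T;H^1)}+\|u_n\|_{L^2(0,T;\mathbf{V})}+\|\Delta Q_n\|_{L^2(0,T;L^2)}\leq C,\nonumber
\end{equation}
with $C$ independent of $n$. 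These a priori bounds simultaneously guarantee that the local solutions extend to global ones and provide the weak-compactness needed for passage to the limit. I would also derive uniform bounds on the time derivatives $\partial_t u_n \in L^2(0,T;\mathbf{V}')$ and $\partial_t Q_n$ in a suitable negative-order space by estimating each term of the equations using the above bounds and Sobolev embeddings in $d=2,3$.

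The third step is the passage to the limit $n\to\infty$. Using the a priori estimates I would extract a subsequence converging weakly-$*$ in $L^\infty(0,T;\mathbf{H})$, weakly in $L^2(0,T;\mathbf{V})$ (for $u_n$) and correspondingly for $Q_n$; the bounds on the time derivatives together with the Aubin-Lions-Simon lemma yield strong convergence of $u_n$ in $L^2(0,T;\mathbf{H})$ and of $Q_n$ in $L^2(0,T;H^1)$. The main obstacle, as always in such quasilinear coupled systems, is to pass to the limit in the nonlinear terms: the convective terms $u_n\cdot\nabla u_n$ and $u_n\cdot\nabla Q_n$, the highly nonlinear coupling $S(\nabla u_n,Q_n)$ which contains products of $\nabla u_n$ (only weakly convergent) with $Q_n$, and the stress tensors $\tau,\sigma$ which involve products such as $QH(Q)$ and $\nabla Q\odot\nabla Q$. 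Here the strong convergence of $Q_n$ in $L^2(0,T;H^1)$ is precisely what is needed to handle products of a strongly convergent factor with a weakly convergent one; terms cubic or quartic in $Q_n$ (from $H(Q)$ and $f_B$) are controlled using the strong $L^2(H^1)\hookrightarrow$ compactness together with the uniform $L^\infty(H^1)$ bound and Sobolev interpolation. Since the tracelessness and symmetry of $Q$ are linear constraints preserved by the scheme and stable under weak limits, the limit $Q$ remains $S_0^{(d)}$-valued. Finally, the energy inequality \eqref{BELin} follows from the discrete energy equality by weak lower semicontinuity of the norms appearing on the left-hand side, passing to the limit in the integrated dissipation using Fatou's lemma, which yields the stated inequality (rather than equality) for a.e. $t$.
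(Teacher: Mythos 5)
Your overall strategy (Galerkin approximation, discrete energy law, compactness, weak lower semicontinuity) is the right one in spirit; indeed the paper does not write out a proof at all, but simply observes that the argument of \cite{ADL14} carries over to the periodic setting, the only periodic-specific point being that the lower bound \eqref{below} on the free energy removes any smallness restriction on $\xi$. However, your Step 2 contains a genuine gap. You discretize \emph{both} equations, projecting the $Q$-equation onto a finite-dimensional Fourier space $W_N$, and then claim to test the approximate $Q$-equation with $\Gamma H(Q_n)$ to reproduce the cancellation underlying Lemma \ref{prop on basic energy law}. But $H(Q_n)=L\Delta Q_n+F(Q_n)$ is \emph{not} an element of $W_N$ (the nonlinear bulk terms $Q_n\tr(Q_n^2)$, etc.\ leave the space), so it is not an admissible test function for the projected equation. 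If you instead test with the admissible function $P_N H(Q_n)$, the time-derivative term still gives $-\frac{d}{dt}\mathcal{F}(Q_n)$ (since $\partial_t Q_n\in W_N$ and $P_N$ is self-adjoint), but the transport term and $S(\nabla u_n,Q_n)$ are now paired with $P_N H(Q_n)$, whereas the stress tensors $\tau_n,\sigma_n$ in the velocity equation are built from the unprojected $H(Q_n)$. The exact cancellation therefore fails by commutator terms involving $(I-P_N)H(Q_n)$, and these error terms are supercritical: they cannot be absorbed by the dissipation, so you do not obtain the uniform bounds your whole argument rests on.

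There are two standard ways to repair this, and you must commit to one. Either (i) use a \emph{semi}-Galerkin scheme, projecting only the velocity equation and solving the quasilinear parabolic $Q$-equation exactly for each given $u_n\in V_N$ --- this is precisely the scheme \eqref{approx navier-stokes}--\eqref{approx IC} the paper sets up (following \cite{LL95}) for strong solutions, and with it the identity \eqref{BELN} holds because $H(Q^N)$ is a legitimate test function for the unprojected equation \eqref{approx Q equ}; or (ii) keep the full Galerkin scheme but replace $H(Q_n)$ by $P_N H(Q_n)$ \emph{everywhere} in the approximate system (in $\tau_n$, $\sigma_n$ and on the right-hand side of the $Q$-equation), so that the cancellation holds exactly with the projected quantity, the dissipation becomes $\lambda\Gamma\|P_N H(Q_n)\|_{L^2}^2$, and weak lower semicontinuity still yields \eqref{BELin} in the limit since $P_N H(Q_n)\rightharpoonup H(Q)$. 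A second, more minor, slip: your claim $\partial_t u_n\in L^2(0,T;\mathbf{V}')$ is false for $d=3$ with only the energy bounds (the convective term is merely $L^{4/3}$ in time); this does not hurt, since Aubin--Lions only requires a bound in some $L^p(0,T;X)$ with $p\geq 1$ and $X$ a sufficiently negative-order space, but it should be stated correctly.
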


\subsection{Main results}

In the remaining part of this paper, we shall focus on the two dimensional case that $d=2$. First, we observe the simple fact that when $d=2$,
it holds $\tr(Q^3)=0$ and thus the cubic term with coefficient $b$ in the free energy $\F(Q)$ (see \eqref{elastic energy})
 vanishes (cf. \cite{IXZ14}). As a consequence, we have a simpler expression for $H(Q)$ in the $2D$ case:
\begin{equation}
H(Q)=L\Delta{Q}-aQ-cQ\tr(Q^2).\label{def of H2d}
\end{equation}

Let us introduce the notion of strong solutions to problem \eqref{navier-stokes}-\eqref{IC}:

\begin{definition}
\label{def of strong}
Suppose that $d=2$ and $(u_0, Q_0)\in \mathbf{V}\times H^2\big(\T^2, S_0^{(2)}\big)$. A pair $(u, Q)$ is called a global
strong solution to problem \eqref{navier-stokes}-\eqref{IC} if
\begin{align}
&u \in C([0, +\infty); \mathbf{V}) \cap L_{loc}^2(0, +\infty; H^2(\T^2,\RR^2)), \\
&Q \in C([0, +\infty); H^2(\T^2,S_0^{(2)}) \cap L_{loc}^2(0, +\infty;
H^3(\T^2,S_0^{(2)})).
\end{align}
Moreover, the equations \eqref{navier-stokes} for $u$ and the equations \eqref{Q equ} for $Q$ are satisfied in $L_{loc}^2(0, +\infty; \mathbf{H})$ and $L^2_{loc}(0, +\infty; L^2(\T^2,S_0^{(2)}))$, respectively.
\end{definition}

Then we state the main results of this paper. The first result
is about the global well-posedness of the hydrodynamic system \eqref{navier-stokes}-\eqref{IC} in $\T^2$.

\begin{theorem}[Existence and uniqueness of global strong solutions] \label{strong2d}
Suppose $d=2$ and $\xi\in\RR$. Then, for any $(u_0, Q_0)\in
\mathbf{V}\times H^2\big(\T^2, S_0^{(2)}\big)$, problem
\eqref{navier-stokes}-\eqref{IC} admits a unique global solution
$(u, Q)$ in the sense of Definition \ref{def of strong}, which satisfies
$$\|u(t)\|_{H^1}+\|Q(t)\|_{H^2}\leq C, \quad \forall\, t\geq 0,$$
where  $C>0$ is a constant that depends on $\nu, \Gamma, L, \lambda, a, c,
\T^2, \|u_0\|_{H^1}, \|Q_0\|_{H^2}$ and $\xi$.
\end{theorem}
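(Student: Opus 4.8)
The plan is to obtain Theorem \ref{strong2d} by combining a local existence result with a single global-in-time a priori estimate that both prevents blow-up and is uniform in $t$. First I would construct a local strong solution on a maximal interval $[0,T^\ast)$, either by a Galerkin scheme (projecting \eqref{navier-stokes} onto the eigenspaces of the Stokes operator $\mathcal{S}$ from \eqref{stokes} and \eqref{Q equ} onto those of $-\Delta$, solving the finite-dimensional system, and passing to the limit) or by propagating $\mathbf{V}\times H^2$ regularity of the weak solution furnished by Proposition \ref{weakexe}; uniqueness on $[0,T^\ast)$ is handled separately below. By the standard continuation principle for such coupled systems, everything then reduces to showing that
\[
y(t):=\|u(t)\|_{H^1}^2+\|Q(t)\|_{H^2}^2
\]
stays finite up to $T^\ast$; if in addition $y$ admits a bound independent of $T^\ast$ and of $t$, we simultaneously get $T^\ast=+\infty$ and the asserted uniform estimate.

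For the a priori estimate I would test \eqref{navier-stokes} with the Stokes term $\mathcal{S}u=-\Delta u$ and pair \eqref{Q equ} so as to produce the dissipation $c_0\|\nabla\Delta Q\|_{L^2}^2$ (e.g. against $\Delta^2 Q$ at the Galerkin level, or via the equation satisfied by $H(Q)$), so that after integration by parts the coercive terms $\nu\|\Delta u\|_{L^2}^2$ and $c_0\|\nabla\Delta Q\|_{L^2}^2$ — consistent with the regularity class of Definition \ref{def of strong} — appear on the left:
\[
\frac{d}{dt}\,y(t)+\nu\|\Delta u\|_{L^2}^2+c_0\|\nabla\Delta Q\|_{L^2}^2\le \mathcal{N}(t).
\]
Here $\mathcal{N}(t)$ gathers the convection term $(u\cdot\nabla u,\Delta u)$, the stretching/rotation contributions from $S(\nabla u,Q)$, and the highly nonlinear stresses built from products of $Q$, $H(Q)$ and their derivatives. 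The lower-order bounds \eqref{lower order bound} keep $\|u\|_{L^2}$ and $\|Q\|_{H^1}$ fixed, while integrating the basic energy law of Lemma \ref{prop on basic energy law} over $(0,+\infty)$ and using that $\mathcal{E}$ is bounded below gives $\int_0^{+\infty}\big(\|\nabla u\|_{L^2}^2+\|H(Q)\|_{L^2}^2\big)\,dt\le C$; this is the genuinely $L^1(0,+\infty)$ dissipation that I would package into the coefficient $\mathcal{A}(t)$.

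The crux, and the step I expect to be the main obstacle, is the estimate of $\mathcal{N}(t)$ for general $\xi\ne0$: the maximum principle fails, so no $L^\infty$-in-time bound on $Q$ is available. To keep the cubic term $cQ\tr(Q^2)$ and the mixed products under control I would combine the two-dimensional Ladyzhenskaya and Gagliardo-Nirenberg inequalities with the sharp Brezis-Gallouet logarithmic embedding, following the interpolation strategy of \cite{PZ11}, in order to bound each piece of $\mathcal{N}(t)$ by
\[
\tfrac12\big(\nu\|\Delta u\|_{L^2}^2+c_0\|\nabla\Delta Q\|_{L^2}^2\big)+C\,\mathcal{A}(t)\,(e+y)\log(e+y)\,\log\big(e+\log(e+y)\big),
\]
and absorb the first bracket on the left. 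This yields a double-logarithmic differential inequality $y'\le C\mathcal{A}(t)\,\phi(y)$. Since $\mathcal{A}\in L^1(0,+\infty)$ and $\int^{+\infty}\!\frac{ds}{\phi(s)}=+\infty$, an Osgood/generalized-Gronwall argument bounds $y(t)$ in terms of $y(0)$ and $\int_0^{+\infty}\mathcal{A}\,dt$ only, hence independently of $T^\ast$ and of $t$; this rules out blow-up and delivers $\|u(t)\|_{H^1}+\|Q(t)\|_{H^2}\le C$.

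For uniqueness I would take two strong solutions $(u_1,Q_1)$, $(u_2,Q_2)$ with the same data, set $\ru=u_1-u_2$ and $\rQ=Q_1-Q_2$, and derive a differential inequality for $\|\ru\|_{L^2}^2+\|\rQ\|_{H^1}^2$. Exploiting the uniform $\mathbf{V}\times H^2$ bound just established for each solution, and estimating the quadratic and cubic differences by Ladyzhenskaya's inequality in 2D, the right-hand side takes the form $C(t)\big(\|\ru\|_{L^2}^2+\|\rQ\|_{H^1}^2\big)$ with $C\in L^1_{loc}(0,+\infty)$; Gronwall's lemma then forces $\ru\equiv0$ and $\rQ\equiv0$, which completes the proof.
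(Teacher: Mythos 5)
Your overall skeleton (local existence plus continuation, a double-logarithmic Osgood-type differential inequality whose coefficient is the $L^1(0,+\infty)$ dissipation, Gronwall for uniqueness) matches the paper's, but the proposal glosses over the step on which the whole proof hinges, and that step cannot be carried out the way you describe. Your claim that each piece of $\mathcal{N}(t)$ can be bounded by $\tfrac12(\nu\|\Delta u\|_{L^2}^2+c_0\|\nabla\Delta Q\|_{L^2}^2)+C\A(t)\phi(y)$ using Ladyzhenskaya/Gagliardo--Nirenberg/Br\'ezis--Gallouet is false for the critical coupling terms. Testing \eqref{navier-stokes} with $-\Delta u$ produces, from $-\lambda(\nabla\cdot\sigma,\Delta u)_{L^2}$ and from the $\xi$-part of $\tau$, terms of the form $\lambda\int_{\T^2}Q_{ik}\nabla_jH_{kj}\,\Delta u_i\,dx$; symmetrically, pairing \eqref{Q equ} so as to generate $\|\nabla H\|_{L^2}^2$ dissipation produces from the stretching term $S(\nabla u,Q)$ terms like $\lambda\int_{\T^2}\nabla^2u\,Q\,\nabla H\,dx$. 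Each such term is only bounded by $\|Q\|_{L^\infty}\|\nabla H\|_{L^2}\|\Delta u\|_{L^2}$, a product of the \emph{two full dissipation norms} times a prefactor that is not bounded: for $\xi\neq0$ there is no maximum principle, and even Br\'ezis--Gallouet only gives $\|Q\|_{L^\infty}\le C\sqrt{\ln(e+\A)}$, while absorption into the dissipation requires a bounded prefactor. No interpolation rescues this. In the paper these critical terms are never estimated: they are shown to \emph{cancel exactly in pairs} between the $u$-equation and $Q$-equation contributions (cancellations (a)--(d) in the Appendix; (b)--(d) are precisely the new identities needed when $\xi\neq0$). This cancellation is the missing idea, and it forces two choices you did not make: the evolved functional must be $\A(t)=\|\nabla u\|_{L^2}^2+\lambda\|H(Q)\|_{L^2}^2$, with the variational derivative $H(Q)$ rather than $\Delta Q$ and with the precise weight $\lambda$, so that the pairs match; and the approximation must be semi-Galerkin (only $u$ projected, the $Q$-equation solved exactly, as in Section 4.1), since projecting \eqref{Q equ} onto eigenspaces of $-\Delta$, as you suggest, destroys these identities at the approximate level because $H(Q^N)$ is no longer an admissible test function.

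A second, related defect concerns uniformity in time even where your estimates do close. Evolving $y=\|u\|_{H^1}^2+\|Q\|_{H^2}^2$ and pairing with $\Delta^2Q$ generates cross terms such as $\Gamma\int_{\T^2}\nabla H:\nabla\big(aQ+cQ\tr(Q^2)\big)\,dx$, which are bounded by $\epsilon\|\nabla H\|_{L^2}^2+C(1+\A)$ with a \emph{standalone} constant $C$; integrating the resulting inequality gives $\Psi(y(t))\le\Psi(y(0))+Ct+C\|\A\|_{L^1(0,+\infty)}$, which rules out finite-time blow-up but grows linearly in $t$, so it does not yield the uniform-in-time bound asserted in the theorem. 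The paper's functional is designed so that every right-hand-side term in \eqref{DAT0} carries at least two factors from $\{H,\nabla H,\nabla u,\Delta u\}$, hence at least one full factor of $\A$ after interpolation (Proposition \ref{high2D}); only then does the Osgood argument produce a bound depending solely on $\A(0)$ and $\int_0^{\infty}\A\,dt$. Your uniqueness argument, by contrast, is fine and coincides with the paper's (the weak--strong uniqueness estimate of Paicu--Zarnescu adapted to the torus).
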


Our second main result states that for any global strong solution obtained in Theorem \ref{strong2d}, it has an unique asymptotic limit as $t\to +\infty$.

\begin{theorem}[Uniqueness of asymptotic limit] \label{long-time}
Suppose that the assumptions in Theorem \ref{strong2d}
are satisfied. For any $(u_0, Q_0)\in
\mathbf{V}\times H^2\big(\T^2, S_0^{(2)}\big)$, the unique global strong solution to problem
\eqref{navier-stokes}-\eqref{IC} converges
to a single steady state solution $(0, Q_\infty)$ as time tends to
infinity:
\begin{equation}
\displaystyle\lim_{t\rightarrow+\infty}(\|u(t)\|_{H^1}+\|Q(t)-Q_\infty\|_{H^2})=0,
\end{equation}
where $Q_\infty\in S_0^{(2)}$ satisfies the elliptic problem in $\T^2$
$$
  L\Delta{Q}_\infty -aQ_\infty-c\tr(Q_\infty^2)Q_\infty=0,
  \;\;\mbox{in } \T^2, \quad Q_\infty(x+e_i)=Q_\infty \;\;\mbox{for } x\in \T^2.
$$
Furthermore, the following estimate on convergence rate holds
\begin{equation}
\|u(t)\|_{H^1}+\|Q(t)-Q_\infty\|_{H^2}\leq
C(1+t)^{-\frac{\theta}{1-2\theta}}, \quad\forall\, t\geq 0.\label{covrate}
\end{equation}
Here, $C>0$ is a constant that depends on $\nu, \Gamma, L, \lambda, a, c, \xi,
\T^2, \|u_0\|_{H^1}, \|Q_0\|_{H^2}, \|Q_\infty\|_{H^2}$, and
$\theta\in (0, \frac12)$  is the constant given in Lemma \ref{lemma on LS inequality} depending on $Q_\infty$.
\end{theorem}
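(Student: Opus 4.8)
The plan is to follow the \L ojasiewicz--Simon strategy, exploiting the gradient-flow structure carried by the $Q$-equation together with the strong dissipation furnished by the basic energy law of Lemma~\ref{prop on basic energy law}. First I would establish that the $\omega$-limit set of the trajectory is nonempty and consists entirely of equilibria of the form $(0,Q_\infty)$. The uniform-in-time bound $\|u(t)\|_{H^1}+\|Q(t)\|_{H^2}\le C$ from Theorem~\ref{strong2d}, combined with compact Sobolev embeddings and the higher-order control of Proposition~\ref{high2D}, yields precompactness of $\{(u(t),Q(t))\}_{t\ge 0}$ in $\mathbf V\times H^2$. Writing $\E_\infty:=\lim_{t\to\infty}\E(t)$, which exists since $\E$ is nonincreasing and bounded below by \eqref{below}, and using the decay $\A(t)\to 0$ of Lemma~\ref{lemma on decay property} together with $\int_0^\infty(\nu\|\nabla u\|_{L^2}^2+\lambda\Gamma\|H(Q)\|_{L^2}^2)\,dt<\infty$, I would show that along a suitable sequence $t_n\to\infty$ one has $\|\nabla u(t_n)\|_{L^2}\to 0$ and $\|H(Q(t_n))\|_{L^2}\to 0$; hence every cluster point has the form $(0,Q_\infty)$ with $Q_\infty$ solving the stated elliptic problem, and $\E_\infty=\lambda\F(Q_\infty)$.

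Next I would invoke the \L ojasiewicz--Simon inequality of Lemma~\ref{lemma on LS inequality}: there exist $\theta\in(0,\tfrac12)$ and constants $C_{LS},\beta>0$ such that $|\F(Q)-\F(Q_\infty)|^{1-\theta}\le C_{LS}\|H(Q)\|_{L^2}$ whenever $\|Q-Q_\infty\|_{H^2}<\beta$. The crucial point, and the mechanism that reconciles the non-variational Navier--Stokes coupling with this variational tool, is that the kinetic energy is slaved to the dissipation: since $2(1-\theta)\ge 1$, Poincar\'e's inequality (recall $u$ has zero mean) gives $\big(\tfrac12\|u\|_{L^2}^2\big)^{1-\theta}\le C\|\nabla u\|_{L^2}$ for bounded $\|\nabla u\|_{L^2}$. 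Combining this with Lemma~\ref{lemma on LS inequality} produces the key estimate $(\E(t)-\E_\infty)^{1-\theta}\le C(\|\nabla u\|_{L^2}+\|H(Q)\|_{L^2})$ on the time set where the solution stays in the $\beta$-ball around $Q_\infty$.

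Differentiating $(\E(t)-\E_\infty)^\theta$ and using the energy law then yields $-\tfrac{d}{dt}(\E(t)-\E_\infty)^\theta\ge c\,(\|\nabla u\|_{L^2}+\|H(Q)\|_{L^2})$, where I use $\tfrac{\|\nabla u\|_{L^2}^2+\|H(Q)\|_{L^2}^2}{\|\nabla u\|_{L^2}+\|H(Q)\|_{L^2}}\ge\tfrac12(\|\nabla u\|_{L^2}+\|H(Q)\|_{L^2})$. A standard continuation argument (were the trajectory to leave the $\beta$-ball, the resulting integral bound would contradict the existence of the limit point $(0,Q_\infty)$) shows the solution remains in the ball for all large $t$, so integrating gives $\int_t^\infty(\|\nabla u\|_{L^2}+\|H(Q)\|_{L^2})\,d\tau\le C(\E(t)-\E_\infty)^\theta$. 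This forces the whole trajectory, not merely a subsequence, to be Cauchy and to converge to the single limit $(0,Q_\infty)$; strong $H^1\times H^2$ convergence is then recovered from the uniform higher-order bounds, elliptic regularity for $L\Delta Q=H(Q)+aQ+c\,\tr(Q^2)Q$, and $\A(t)\to 0$.

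Finally, for the rate \eqref{covrate} I would feed the key estimate back into the energy law to obtain $-\tfrac{d}{dt}(\E(t)-\E_\infty)\ge c\,(\E(t)-\E_\infty)^{2(1-\theta)}$; since $2(1-\theta)>1$, integration gives $\E(t)-\E_\infty\le C(1+t)^{-1/(1-2\theta)}$, and inserting this into the tail integral above produces the decay $(1+t)^{-\theta/(1-2\theta)}$ for the base-norm distance to equilibrium, which is promoted to the $H^1\times H^2$ rate via interpolation with the uniform bounds and the higher-order inequality \eqref{hiA}. I expect the main obstacles to be twofold: first, constructing the \L ojasiewicz--Simon inequality (Lemma~\ref{lemma on LS inequality}) for the $S_0^{(2)}$-valued functional $\F$ under periodic boundary conditions, where translation invariance renders the equilibrium set a continuum and the analyticity/Fredholm setup must be handled with care; and second, upgrading the convergence and its rate from the base topology to the full $H^1\times H^2$ norm, which genuinely requires the uniform-in-time higher-order control of Proposition~\ref{high2D} rather than merely local-in-time regularity.
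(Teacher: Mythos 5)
Your treatment of the convergence statement itself follows the paper's route very closely: characterization of the $\omega$-limit set (Lemma \ref{lemma on decay property}), the \L ojasiewicz--Simon inequality of Lemma \ref{lemma on LS inequality}, the key estimate $(\E(t)-\F_\infty)^{1-\theta}\leq C(\|\nabla u\|_{L^2}+\|H(Q)\|_{L^2})$ obtained via Poincar\'e and $2(1-\theta)>1$ (this is exactly \eqref{EFE}), and then a tail-integral bound yielding $\|Q_t\|_{L^2}\in L^1$ in time and hence a single limit. The only differences are cosmetic: the paper covers the compact $\omega$-limit set by finitely many \L ojasiewicz balls to get a uniform $\theta$ and avoid your continuation argument, and it deduces $\A^{1/2}\in L^1(t_0,\infty)$ through the Feireisl--Simondon lemma (Lemma \ref{f}) rather than by differentiating $(\E-\F_\infty)^\theta$ directly, although the paper itself uses the latter device in its rate section; both variants are standard and interchangeable. (One small inaccuracy: uniform $\mathbf{V}\times H^2$ bounds give precompactness of the trajectory only in weaker topologies, not in $\mathbf{V}\times H^2$ as you assert; but since you recover $H^2$ convergence at the end from $\A(t)\to 0$ and elliptic regularity, as the paper does, this is harmless.)

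The genuine gap is in the convergence rate \eqref{covrate}. Interpolating the lower-order rate $\|Q(t)-Q_\infty\|_{L^2}\leq C(1+t)^{-\theta/(1-2\theta)}$ against the uniform $H^2$ bound \eqref{uniES} only gives $\|Q(t)-Q_\infty\|_{H^1}\leq C(1+t)^{-\theta/(2(1-2\theta))}$, i.e.\ \emph{half} the claimed exponent, and still less at the $H^2$ level; moreover your tail-integral estimate controls $\int_t^\infty\|\nabla u\|_{L^2}\,ds$, which gives no pointwise decay of $\|u(t)\|_{H^1}$ whatsoever, so the $u$-component of \eqref{covrate} does not follow from anything you wrote. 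The higher-order inequality \eqref{hiA} cannot rescue this by itself, since after inserting the uniform bounds it reduces to $\frac{d}{dt}\A\leq C\A$, which yields no decay. The paper closes precisely this gap with an additional energy argument: it subtracts the limiting elliptic problem, tests the equations by $u$, $-\lambda(H(Q)-H(Q_\infty))$ and $Q-Q_\infty$, builds the functional $\mathcal{Y}(t)$ (equivalent to $\|u\|_{L^2}^2+\|Q-Q_\infty\|_{H^1}^2$), derives the differential inequality \eqref{ra7}, namely $\frac{d}{dt}\mathcal{Y}+C_3[\mathcal{Y}+\A]\leq C_4\|Q-Q_\infty\|_{L^2}^2$, and couples it with \eqref{simplified higher-order inequality} to obtain \eqref{ddecaA}; this propagates the $L^2$ rate to $\mathcal{Y}(t)+\alpha\A(t)$ \emph{without loss of exponent}, which is what delivers $\|\nabla u(t)\|_{L^2}+\|\Delta Q(t)-\Delta Q_\infty\|_{L^2}\leq C(1+t)^{-\theta/(1-2\theta)}$. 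Without an argument of this type, your proposal proves convergence but only a strictly weaker rate than the one stated.
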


\section{Higher-Order Energy Inequality}
\setcounter{equation}{0}

In this section we will derive a useful higher-order energy
inequality for problem \eqref{navier-stokes}-\eqref{IC}. For
the sake of simplicity, the subsequent calculations shall be performed
formally on smooth solutions of the problem
\eqref{navier-stokes}-\eqref{IC}, without referring to any
approximation. Nevertheless, they can be justified by working within
the Faedo-Galerkin approximation scheme \eqref{approx
navier-stokes}-\eqref{approx IC} given in Section 4.

We start by recalling the following special cases of the Gagliardo-Nirenberg inequality in 2D that will be frequently used
in the subsequent proofs (see, e.g., \cite{LA1}):
\begin{lemma}
Suppose $d=2$.  We have
\begin{eqnarray}
\|g\|_{L^4}&\leq& C\big(\|\nabla g\|_{L^2}^\frac12\|g\|_{L^2}^\frac12 +\|g\|_{L^2}\big), \quad \forall\, g\in H^1(\T^2),\label{GN}\\
\|\nabla g\|_{L^2} &\leq& \|g\|_{L^2}^\frac12\|\Delta g\|_{L^2}^\frac12, \quad \forall\, g\in H^2(\T^2).\label{GN1}
\end{eqnarray}
\end{lemma}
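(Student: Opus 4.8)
The plan is to handle the two inequalities separately, since \eqref{GN1} is a one-line consequence of integration by parts, whereas \eqref{GN} is a genuine interpolation estimate of Ladyzhenskaya type that I would reduce to a one-dimensional bound on the circle.

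For \eqref{GN1}, I would argue directly. Since $g$ is periodic, integration by parts produces no boundary contribution, so
\[
\|\nabla g\|_{L^2}^2=\int_{\T^2}\nabla_k g\,\nabla_k g\,dx=-\int_{\T^2} g\,\Delta g\,dx\leq \|g\|_{L^2}\|\Delta g\|_{L^2},
\]
the last step being the Cauchy--Schwarz inequality. Taking square roots yields \eqref{GN1} with constant $1$. The same computation, summed over components, also covers the matrix-valued setting in which the estimate is later applied.

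For \eqref{GN}, the first step I would take is to establish a one-dimensional inequality on the circle $\mathbb{T}$: for periodic $h$ one has $\|h\|_{L^\infty(\mathbb{T})}^2\leq \|h\|_{L^2(\mathbb{T})}^2+2\|h\|_{L^2(\mathbb{T})}\|h'\|_{L^2(\mathbb{T})}$. This follows from the identity $h^2(x)=h^2(y)+\int_y^x (h^2)'(s)\,ds$ by averaging over $y\in\mathbb{T}$: the averaged first term gives $\|h\|_{L^2}^2$, while the second is controlled by $\int_{\mathbb{T}}|(h^2)'|\,ds=2\int_{\mathbb{T}}|h|\,|h'|\,ds\leq 2\|h\|_{L^2}\|h'\|_{L^2}$. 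The second step is the classical tensorization argument: writing $g^4=g^2\cdot g^2$ and bounding one factor by its supremum in $x_1$ and the other by its supremum in $x_2$, I get
\[
\int_{\T^2} g^4\,dx\leq\Big(\int_{\mathbb{T}}\sup_{x_1}g^2\,dx_2\Big)\Big(\int_{\mathbb{T}}\sup_{x_2}g^2\,dx_1\Big).
\]
Applying the one-dimensional bound slicewise and then Cauchy--Schwarz in the remaining variable yields $\int_{\mathbb{T}}\sup_{x_1}g^2\,dx_2\leq \|g\|_{L^2}^2+2\|g\|_{L^2}\|\partial_1 g\|_{L^2}$ and, symmetrically, the analogue in $x_2$. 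Multiplying the two and using $\|\partial_i g\|_{L^2}\leq\|\nabla g\|_{L^2}$ gives $\|g\|_{L^4}^2\leq \|g\|_{L^2}^2+2\|g\|_{L^2}\|\nabla g\|_{L^2}$, from which \eqref{GN} follows via $\sqrt{a+b}\leq\sqrt{a}+\sqrt{b}$.

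The computations here are entirely routine; the only point requiring attention --- and the reason \eqref{GN} carries the extra additive term $\|g\|_{L^2}$ absent from the clean whole-plane Ladyzhenskaya inequality $\|g\|_{L^4}^2\leq C\|g\|_{L^2}\|\nabla g\|_{L^2}$ --- is that on $\T^2$ one cannot integrate "from infinity". The fundamental-theorem-of-calculus step must instead be anchored at the mean value of $h^2$, and it is precisely this mean that produces the $\|h\|_{L^2}^2$ contribution in the one-dimensional estimate, and hence the lower-order term in \eqref{GN}. This term is genuinely necessary, since nonzero constants already show that the homogeneous inequality fails on a compact domain; thus the main conceptual step is simply to recognize that it cannot be discarded, rather than any analytic difficulty.
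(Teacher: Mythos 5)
Your proposal is correct in every step. Note that the paper does not actually prove this lemma: it records the two estimates as standard special cases of the Gagliardo--Nirenberg inequality and simply cites Ladyzhenskaya--Solonnikov--Uraltseva, so there is no in-paper argument to compare against; what you have written is a valid self-contained substitute. Your proof of \eqref{GN1} (periodic integration by parts plus Cauchy--Schwarz, giving constant $1$, which matches the constant-free form in the statement) and your proof of \eqref{GN} (the classical Ladyzhenskaya slicing: the one-dimensional bound $\|h\|_{L^\infty(\mathbb{T})}^2\leq\|h\|_{L^2}^2+2\|h\|_{L^2}\|h'\|_{L^2}$ obtained by averaging the fundamental theorem of calculus over the base point, followed by tensorization, Fubini, and Cauchy--Schwarz in the remaining variable) are exactly the textbook route, correctly adapted to the compact setting. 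Your closing remark is also on target: the additive term $\|g\|_{L^2}$ is genuinely unavoidable on $\T^2$, as nonzero constants show, and anchoring the FTC step at the mean is precisely what produces it --- this is the one place where the periodic statement differs from the homogeneous whole-plane inequality, and you have handled it correctly.
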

Besides, we will make use of the following $L^p$-interpolation inequality with precise growth of the
constant in 2D, which follows from \cite{CX} (see also \cite[Lemma 10]{MP}) and the Sobolev extension theorem \cite[Chap. 2, Sect. 3.6]{Ne}:
\begin{lemma}\label{Lp}
Suppose $d=2$. For any $\eta>1$, it holds:
$$\|g\|_{L^{2\eta}}\leq C\sqrt{\eta}\|g\|_{H^1}^{1-\frac{1}{\eta}}\|g\|_{L^2}^\frac{1}{\eta}, \quad \forall\, g\in H^1(\T^2), $$
where the constant $C$ is independent of the exponent $\eta$ and function $g$.
\end{lemma}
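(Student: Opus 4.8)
The plan is to transplant the corresponding sharp interpolation inequality from the whole plane $\RR^2$ onto the periodic cell $\mathcal{O}=(0,1)^2$ by means of a bounded Sobolev extension operator, arranging matters so that the extension contributes only an $\eta$-independent factor and the sole source of $\eta$-dependence is the one already present on $\RR^2$. First I would recall from \cite{CX} the sharp Gagliardo--Nirenberg inequality on the whole plane: for every $\eta>1$ and $f\in H^1(\RR^2)$,
$$\|f\|_{L^{2\eta}(\RR^2)}\leq C_0\sqrt{\eta}\,\|\nabla f\|_{L^2(\RR^2)}^{1-\frac1\eta}\|f\|_{L^2(\RR^2)}^{\frac1\eta},$$
with $C_0$ independent of $\eta$ and $f$. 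The interpolation exponents $1-1/\eta$ and $1/\eta$ are dictated by scaling: under $f\mapsto f(\lambda\,\cdot)$ the left side scales as $\lambda^{-1/\eta}$, the homogeneous seminorm $\|\nabla f\|_{L^2}$ is scale invariant in two dimensions, and $\|f\|_{L^2}^{1/\eta}$ supplies the matching factor $\lambda^{-1/\eta}$. The genuinely sharp ingredient, and the only place where precise asymptotics enter, is the $\sqrt{\eta}$ growth of the constant furnished by \cite{CX}. Bounding $\|\nabla f\|_{L^2}\leq\|f\|_{H^1(\RR^2)}$ then yields the (weaker) $H^1$-version that I will actually carry over to the torus.

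Next I would invoke a Sobolev extension operator $E\colon H^1(\mathcal{O})\to H^1(\RR^2)$ as provided by \cite[Chap.~2, Sect.~3.6]{Ne}; since $\mathcal{O}$ is a cube, a reflection/cutoff extension does the job. The feature to exploit is that $E$ may be taken to be a single, $\eta$-independent bounded linear operator that is simultaneously bounded on $L^2$ and on $H^1$, so that there is a constant $C_1\geq 1$ depending only on the fixed cell $\mathcal{O}$ with
$$\|Eg\|_{L^2(\RR^2)}\leq C_1\|g\|_{L^2(\mathcal{O})},\qquad \|Eg\|_{H^1(\RR^2)}\leq C_1\|g\|_{H^1(\mathcal{O})}.$$
Applying the whole-space inequality to $f=Eg$ and using that $Eg=g$ on $\mathcal{O}$, I would obtain
$$\|g\|_{L^{2\eta}(\mathcal{O})}\leq \|Eg\|_{L^{2\eta}(\RR^2)}\leq C_0\sqrt{\eta}\,\big(C_1\|g\|_{H^1(\mathcal{O})}\big)^{1-\frac1\eta}\big(C_1\|g\|_{L^2(\mathcal{O})}\big)^{\frac1\eta}.$$

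The delicate bookkeeping step, which I would flag as the only point requiring care, is that the two powers of the extension constant recombine as $C_1^{1-1/\eta}C_1^{1/\eta}=C_1$, independently of $\eta$; this cancellation works precisely because one and the same operator controls both the $L^2$ and the $H^1$ norms, so no $\eta$-dependent constant survives. Hence
$$\|g\|_{L^{2\eta}(\mathcal{O})}\leq C_0C_1\sqrt{\eta}\,\|g\|_{H^1(\mathcal{O})}^{1-\frac1\eta}\|g\|_{L^2(\mathcal{O})}^{\frac1\eta}.$$
Finally, identifying integration over $\T^2$ with integration over $\mathcal{O}$, so that $\|g\|_{L^p(\mathcal{O})}=\|g\|_{L^p(\T^2)}$ and likewise for the $H^1$-norm, and setting $C=C_0C_1$, gives exactly the claimed bound with $C$ independent of $\eta$ and $g$. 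I expect the main obstacle to be entirely concentrated in importing the sharp $\sqrt{\eta}$ growth on $\RR^2$ from \cite{CX} (an alternative self-contained route would iterate the two-dimensional Ladyzhenskaya inequality on powers $f^k$, where the factor $k$ from $\nabla f^k=kf^{k-1}\nabla f$ generates the $\sqrt{\eta}$ rate); the remainder is a routine extension-and-restriction argument whose only subtlety is the $\eta$-uniformity just described.
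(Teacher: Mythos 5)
Your proposal is correct and follows exactly the route the paper itself indicates (the paper gives no written proof, only the citations): the sharp $\sqrt{\eta}$-interpolation inequality of Chemin--Xu on $\RR^2$ combined with the Sobolev extension theorem of N\u{e}cas, restricted back to the periodic cell. Your additional observation that the extension constant recombines as $C_1^{1-1/\eta}C_1^{1/\eta}=C_1$, so that no $\eta$-dependence leaks in, is precisely the point that makes the citation-level argument rigorous.
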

Next, we recall that when $\xi=0$ the system
\eqref{navier-stokes}-\eqref{IC} enjoys a maximum principle for the
$Q$-equation \eqref{Q equ} (see e.g., \cite[Theorem 3]{GR15}).
However, since now the parameter $\xi$ is allowed to be non-zero,
the maximum principle property is no longer valid. The loss of
control on $Q\in L^\infty(0, T; L^\infty)$ brings us several
difficulties in obtaining estimates for highly nonlinear terms of
the system. In order to handle the $L^\infty$-norm of $Q$, we shall
use the following well-known results

\begin{lemma}[Agmon's Inequality \cite{Te97}]
When $d=2$, it holds
\begin{equation}
\|g\|_{L^\infty}\leq C\|g\|_{L^2}^\frac12\|g\|_{H^2}^\frac12, \quad \forall\, g\in H^2(\T^2).\label{AG}
\end{equation}
\end{lemma}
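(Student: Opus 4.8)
The plan is to prove this by a frequency decomposition of the Fourier series on the torus, which also makes transparent why the inequality serves as a substitute for the (false) critical embedding $H^1 \hookrightarrow L^\infty$ in two dimensions. Since $\mathcal{O} = (0,1)^2$, I would expand $g(x) = \sum_{k \in \mathbb{Z}^2} \hat{g}_k e^{2\pi i k \cdot x}$ and start from the elementary bound $\|g\|_{L^\infty} \leq \sum_{k} |\hat{g}_k|$. The whole problem is then reduced to estimating this $\ell^1$ norm of the Fourier coefficients in terms of the two quantities which, by Parseval's identity, represent the $L^2$ and $H^2$ norms, namely $(\sum_k |\hat{g}_k|^2)^{1/2}$ and $(\sum_k (1+|k|^2)^2 |\hat{g}_k|^2)^{1/2}$.

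First I would split the sum at a cutoff radius $R>0$ to be chosen. For the low frequencies I would use Cauchy-Schwarz together with the two-dimensional lattice-point count $\#\{k : |k| \leq R\} \leq C(1+R^2)$, which yields $\sum_{|k| \leq R} |\hat{g}_k| \leq C(1+R)\|g\|_{L^2}$. For the high frequencies I would insert the weight $(1+|k|^2)^{-1}$ and apply Cauchy-Schwarz in the form
\begin{equation}
\sum_{|k| > R} |\hat{g}_k| \leq \left(\sum_{|k| > R} \frac{1}{(1+|k|^2)^2}\right)^{\frac12} \left(\sum_{k} (1+|k|^2)^2 |\hat{g}_k|^2\right)^{\frac12} \leq C(1+R)^{-1}\|g\|_{H^2},
\end{equation}
where the first factor is controlled by comparison with the integral $\int_R^\infty r^{-3}\,dr \sim R^{-2}$ in two dimensions.

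Combining the two pieces gives $\|g\|_{L^\infty} \leq C(1+R)\|g\|_{L^2} + C(1+R)^{-1}\|g\|_{H^2}$ for every $R>0$, and I would optimize by balancing the two terms, choosing $R$ so that $(1+R)^2 \sim \|g\|_{H^2}/\|g\|_{L^2}$; in the trivial case $\|g\|_{H^2} \leq \|g\|_{L^2}$ one simply takes $R=0$ and uses $\|g\|_{L^2} \leq \|g\|_{H^2}$. This substitution produces exactly $\|g\|_{L^\infty} \leq C \|g\|_{L^2}^{\frac12}\|g\|_{H^2}^{\frac12}$.

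The step I expect to be the genuine crux is the high-frequency tail estimate, precisely because the borderline situation is critical: the bare embedding $H^1 \hookrightarrow L^\infty$ fails in $2D$ (the corresponding sum $\sum_k (1+|k|^2)^{-1}$ diverges logarithmically), so one cannot simply bound $\|g\|_{L^\infty}$ by $\|g\|_{H^2}$ with the interpolation exponents appearing for free. The cutoff parameter $R$ and the subsequent optimization are exactly what circumvents this logarithmic failure. As an alternative, the estimate is a special instance of the Gagliardo-Nirenberg inequality with $p=\infty$, $j=0$, $m=2$, $n=2$ and interpolation index $a=\frac12$, so it could also be deduced from bounds of the type \eqref{GN}-\eqref{GN1} together with a standard extension argument; I prefer the Fourier proof above because it is self-contained on $\T^2$ and exhibits the critical obstruction explicitly.
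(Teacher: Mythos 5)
Your argument is correct, but note that the paper offers no proof of this lemma to compare against: it is quoted as a known result with a citation to Temam \cite{Te97}. Your Fourier-splitting proof is the standard self-contained derivation in the space-periodic setting --- low frequencies controlled by Cauchy--Schwarz and the lattice-point count, high frequencies by inserting the weight $(1+|k|^2)^{-1}$ and using the convergent tail $\sum_{|k|>R}(1+|k|^2)^{-2}=O\big((1+R)^{-2}\big)$, then optimizing the cutoff $R$ --- so it supplies exactly what the paper takes for granted. Two minor points, neither affecting validity: first, the ``trivial case'' $\|g\|_{H^2}\leq\|g\|_{L^2}$ that you set aside is essentially vacuous, since $\sum_k(1+|k|^2)^2|\hat{g}_k|^2\geq\sum_k|\hat{g}_k|^2$ always holds, so the choice $1+R=\big(\|g\|_{H^2}/\|g\|_{L^2}\big)^{1/2}\geq 1$ is always admissible (up to the harmless constants in the equivalence between $\|g\|_{H^2}$ and its Fourier-side expression); second, the comparison with $\int_R^\infty r^{-3}\,dr$ justifies the tail bound only for, say, $R\geq 1$, while for $0\leq R<1$ one should simply invoke convergence of the full series $\sum_k(1+|k|^2)^{-2}$ --- a one-line patch. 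Your closing observation is also apt: \eqref{AG} is the endpoint $p=\infty$ member of the Gagliardo--Nirenberg family to which \eqref{GN} and \eqref{GN1} belong, and it cannot be reached through the embedding $H^1\hookrightarrow L^\infty$, which fails in $2D$; your cutoff-and-optimize step is precisely what bypasses that critical obstruction, in the same spirit in which the paper elsewhere bypasses it via the Br\'{e}zis--Gallouet inequality \eqref{BG}.
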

\begin{lemma}[Br\'{e}zis-Gallouet Inequality \cite{BG}]
When $d=2$, for any $g\in H^2(\T^2)$, it holds
\begin{equation}
 \|g\|_{L^\infty}\leq C\|g\|_{H^1}\sqrt{\ln (1+\|g\|_{H^2})} +C\|g\|_{H^1}.\label{BG}
 \end{equation}
\end{lemma}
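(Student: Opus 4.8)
The plan is to exploit the periodic (Fourier) structure of $\T^2$ and to prove a sharp, scale-invariant version of \eqref{BG} from which the stated inequality follows. Writing $g=\sum_{k\in\mathbb{Z}^2}\hat g(k)e^{2\pi i k\cdot x}$, the starting point is the elementary bound $\|g\|_{L^\infty}\le\sum_{k\in\mathbb{Z}^2}|\hat g(k)|$, which reduces everything to estimating this absolutely convergent series. I would then split the frequencies at a level $N\ge 2$ to be chosen, treating $|k|\le N$ and $|k|>N$ separately and measuring the low modes in $H^1$ and the high modes in $H^2$.

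For the low-frequency part, Cauchy--Schwarz gives
\[
\sum_{|k|\le N}|\hat g(k)|\le\Big(\sum_{|k|\le N}(1+|k|^2)^{-1}\Big)^{1/2}\Big(\sum_{|k|\le N}(1+|k|^2)|\hat g(k)|^2\Big)^{1/2}\le C\sqrt{1+\ln N}\,\|g\|_{H^1},
\]
where the crucial point is that in two space dimensions the weight sum $\sum_{|k|\le N}(1+|k|^2)^{-1}$ diverges only logarithmically, like $\ln N$, because the number of lattice points on the circle $|k|\approx r$ grows like $r$. This logarithmic divergence is precisely the source of the $\sqrt{\ln(\cdots)}$ factor in \eqref{BG}. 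For the high-frequency part, a second application of Cauchy--Schwarz with the $H^2$-weight gives
\[
\sum_{|k|>N}|\hat g(k)|\le\Big(\sum_{|k|>N}(1+|k|^2)^{-2}\Big)^{1/2}\Big(\sum_{|k|>N}(1+|k|^2)^2|\hat g(k)|^2\Big)^{1/2}\le \frac{C}{N}\,\|g\|_{H^2},
\]
since in $\T^2$ the tail $\sum_{|k|>N}(1+|k|^2)^{-2}$ behaves like $N^{-2}$.

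Collecting the two bounds yields, for every $N\ge 2$,
\[
\|g\|_{L^\infty}\le C\sqrt{1+\ln N}\,\|g\|_{H^1}+\frac{C}{N}\,\|g\|_{H^2}.
\]
For $g\not\equiv 0$ I would then optimize by choosing $N\approx 2+\|g\|_{H^2}/\|g\|_{H^1}$ (the trivial case $g\equiv0$ being immediate). With this choice the high-frequency term is absorbed into $C\|g\|_{H^1}$, and, using $\sqrt{1+\ln N}\le 1+\sqrt{\ln N}$, one arrives at the scale-invariant Br\'ezis--Gallouet estimate
\[
\|g\|_{L^\infty}\le C\|g\|_{H^1}\sqrt{\ln\big(1+\|g\|_{H^2}/\|g\|_{H^1}\big)}+C\|g\|_{H^1}.
\]
The form \eqref{BG} is then recovered in the regime relevant for the application, where the lower-order norm $\|g\|_{H^1}$ is controlled (e.g.\ by Lemma \ref{prop on lower order norm}), so that $\ln(1+\|g\|_{H^2}/\|g\|_{H^1})$ is dominated by $C\,(1+\ln(1+\|g\|_{H^2}))$ and the constants are adjusted accordingly.

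I expect the only genuinely delicate point to be the bookkeeping in this last step: the natural output of the frequency splitting is the scale-invariant inequality with the ratio $\|g\|_{H^2}/\|g\|_{H^1}$ inside the logarithm, and passing to the stated absolute form with $\ln(1+\|g\|_{H^2})$ must be done carefully using the a priori control of $\|g\|_{H^1}$, since the two forms are not equivalent under the scaling $g\mapsto\lambda g$. By contrast, the estimates of the two lattice sums are routine once the two-dimensional counting $\#\{k:\,|k|\approx r\}\sim r$ is used.
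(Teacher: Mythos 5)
Your Fourier-splitting argument is, in its core, correct, and it is necessarily a different route from the paper, since the paper offers no proof at all: the lemma is simply quoted from the reference [BG]. The two lattice-sum estimates (the logarithmic growth of $\sum_{|k|\le N}(1+|k|^2)^{-1}$ and the $N^{-2}$ decay of the tail $\sum_{|k|>N}(1+|k|^2)^{-2}$, both using the two-dimensional lattice counting) and the optimization in $N$ are sound, and they yield the sharp scale-invariant inequality
\[
\|g\|_{L^\infty}\le C\|g\|_{H^1}\sqrt{\ln\bigl(1+\|g\|_{H^2}/\|g\|_{H^1}\bigr)}+C\|g\|_{H^1}.
\]

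The genuine gap is in your final reduction to \eqref{BG}. Your claim that, when $\|g\|_{H^1}$ is controlled, $\ln\bigl(1+\|g\|_{H^2}/\|g\|_{H^1}\bigr)$ is dominated by $C\bigl(1+\ln(1+\|g\|_{H^2})\bigr)$ is false: an \emph{upper} bound on $\|g\|_{H^1}$ makes the ratio larger, not smaller (take $\|g\|_{H^2}\approx 1$ and $\|g\|_{H^1}=\varepsilon\to 0$; the left-hand side is $\approx\ln(1/\varepsilon)$ while the right-hand side stays bounded). In fact no argument can close this step, because \eqref{BG} as literally stated is false: pick $h_n\in H^2(\T^2)$ with $\|h_n\|_{H^1}=1$ and $a_n\defeq\|h_n\|_{L^\infty}\to\infty$ (possible since $H^1(\T^2)\not\hookrightarrow L^\infty$), set $b_n=\|h_n\|_{H^2}$ and $g_n=h_n/b_n$; then \eqref{BG} applied to $g_n$ reads $a_n/b_n\le C(\sqrt{\ln 2}+1)/b_n$, i.e. $a_n\le C(\sqrt{\ln 2}+1)$, a contradiction --- and note $\|g_n\|_{H^1}\le 1$, so an a priori $H^1$ bound does not rescue the multiplicative form. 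What is true, and is the only form the paper ever uses (it applies the lemma to $Q$ with $\|Q\|_{H^1}\le C$ and needs only $\|Q\|_{L^\infty}^2\le C\bigl(1+\ln(1+\|Q\|_{H^2})\bigr)$), is the additive version: if $\|g\|_{H^1}\le M$, then $\|g\|_{L^\infty}\le C_M\bigl(1+\sqrt{\ln(1+\|g\|_{H^2})}\bigr)$. This does follow from your scale-invariant inequality, but by using the prefactor to absorb the bad part of the logarithm rather than by comparing the two logarithms: writing $x=\|g\|_{H^1}$ and $y=\|g\|_{H^2}$, for $x\ge 1$ one has $\ln(1+y/x)\le\ln(1+y)$, while for $x<1$ one has $x\sqrt{\ln(1+y/x)}\le x\sqrt{\ln(1+y)}+x\sqrt{\ln(2/x)}\le\sqrt{\ln(1+y)}+C$, since $x\mapsto x\sqrt{\ln(2/x)}$ is bounded on $(0,1]$. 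With this one-line correction your proof is complete, but the statement it proves is the additive (normalized) Br\'ezis--Gallouet inequality, which is also how the lemma should be read in the paper.
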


Now we state the main result of this section.
\begin{proposition}\label{high2D}
Let $d=2$ and
\begin{equation} \label{def of A}
\A(t)=\|\nabla{u}(t)\|_{L^2}^2+\lambda\|H(Q(t))\|_{L^2}^2.
\end{equation}
For any $\xi\in \RR$, the following energy inequality holds:
\begin{eqnarray}
&&\frac{d}{dt}\A(t) + \frac{\nu}{2}\|\Delta{u}(t)\|_{L^2}^2+ \frac{\lambda\Gamma}{2}\|\nabla{H}(Q(t))\|_{L^2}^2\non\\
&\leq& C_\ast\Big[1+|\xi|\big[1+\ln(e+\ln(e+\A(t)))\big](e+\ln(e+\A(t)))\Big]\big[e+\A(t)\big]\A(t),\label{hiA}
\end{eqnarray}
where $C_\ast>0$ is a constant that depends on $\nu, \Gamma, L, \lambda, a, c,
\T^2, \|u_0\|_{L^2}, \|Q_0\|_{H^1}$ and $\xi$.
\end{proposition}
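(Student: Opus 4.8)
The plan is to differentiate the quantity $\A(t) = \|\nabla u\|_{L^2}^2 + \lambda\|H(Q)\|_{L^2}^2$ in time and control every resulting term by the right-hand side of \eqref{hiA}. For the velocity part I would test the Navier-Stokes equation \eqref{navier-stokes} with $\mathcal{S}u = -\Delta u$ (equivalently, differentiate $\|\nabla u\|_{L^2}^2$), producing the good dissipative term $\nu\|\Delta u\|_{L^2}^2$ together with the convective term $\int (u\cdot\nabla u)\cdot\Delta u$ and the stress-divergence terms $\lambda\int \nabla\cdot(\tau+\sigma)\cdot\Delta u$. For the $Q$-part the natural multiplier is $-\Delta H(Q)$ (or, after an integration by parts exploiting the structure $H(Q) = L\Delta Q - aQ - cQ\tr(Q^2)$, one differentiates $\|H(Q)\|_{L^2}^2$ and uses $Q_t = -u\cdot\nabla Q + S(\nabla u,Q) + \Gamma H(Q)$). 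This yields the second good term $\lambda\Gamma\|\nabla H(Q)\|_{L^2}^2$ plus the transport term, the stretching/rotation term involving $S(\nabla u, Q)$, and lower-order potential contributions. The $\xi$-dependence enters precisely through the $S$ and $\tau$ terms. A crucial preliminary observation is that the cross terms coming from $\lambda\int\nabla\cdot\sigma\cdot\Delta u$ in the velocity estimate and from the $S$-contribution in the $Q$-estimate should cancel or combine favorably — this is the energy-structure of the Beris-Edwards system that makes the basic law \eqref{basic energy law} hold, and it must be respected at the higher-order level.

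The heart of the matter is estimating the nonlinear terms so that the bad factors are at most of the double-logarithmic type displayed on the right of \eqref{hiA}. The plan is to reduce everything to $\|\Delta u\|_{L^2}$, $\|\nabla H(Q)\|_{L^2}$ (absorbable into the left side by Young's inequality with small constants) times powers of $\A(t)$ and of $\|Q\|_{L^\infty}$. The convective term $\int(u\cdot\nabla u)\cdot\Delta u$ is handled by the standard 2D estimate $\|u\|_{L^4}^2\|\nabla u\|_{L^4}\|\Delta u\|_{L^2}$ using \eqref{GN} and \eqref{GN1}, giving at most $C\A(t)^2$ after absorption. The genuinely delicate terms carry $\xi$ and involve products like $Q\,\nabla H(Q)$, $\nabla Q\,\Delta Q$, and $Q\,\tr(Q^2)$ against $\Delta u$ or $\nabla H(Q)$; here one cannot use $\|Q\|_{L^\infty}$ directly because the maximum principle fails for $\xi\neq 0$. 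The strategy, following \cite{PZ11}, is to control $\|Q\|_{L^\infty}$ by the Br\'{e}zis-Gallouet inequality \eqref{BG}, $\|Q\|_{L^\infty}\leq C\|Q\|_{H^1}\sqrt{\ln(1+\|Q\|_{H^2})}+C\|Q\|_{H^1}$, where $\|Q\|_{H^1}$ is already bounded by Lemma \ref{prop on lower order norm} and $\|Q\|_{H^2}$ relates to $\A(t)$ via $H(Q)$. I would also invoke the precise-growth interpolation Lemma \ref{Lp} to estimate $\|Q\|_{L^{2\eta}}$ with the optimal $\sqrt{\eta}$ constant, choosing the exponent $\eta$ adaptively (essentially $\eta\sim\ln(e+\A(t))$), which is exactly what produces the nested logarithm $\ln(e+\ln(e+\A))$ rather than a mere single logarithm.

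The main obstacle I anticipate is the bookkeeping needed to extract the sharp double-logarithmic dependence in the $\xi$-bracket rather than a polynomial or single-logarithmic factor; a naive use of $\|Q\|_{L^\infty}^2\lesssim \|Q\|_{H^2}$ would give a factor $\A(t)\ln(e+\A(t))$ times a power of $\A$, which is too weak to close the Gronwall-type argument on the unbounded half-line. The refinement is to pair Br\'{e}zis-Gallouet (which contributes one $\ln$) with the optimal-exponent $L^p$-interpolation of Lemma \ref{Lp} (which, through the adaptive choice of $\eta$ and the $\sqrt{\eta}$ growth, contributes the inner $\ln(e+\ln(e+\A))$ factor). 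I would therefore isolate each $\xi$-dependent term, bound it as $\bigl(\text{small}\bigr)\|\Delta u\|_{L^2}^2 + \bigl(\text{small}\bigr)\|\nabla H(Q)\|_{L^2}^2$ plus a remainder of the precise form $C|\xi|\,[1+\ln(e+\ln(e+\A))]\,(e+\ln(e+\A))\,(e+\A)\,\A$, absorbing the two dissipation terms into the left-hand side. Collecting the $\xi$-independent terms (at worst $C(e+\A)\A$) with the $\xi$-dependent remainder yields exactly \eqref{hiA}. The careful tracking of constants through the adaptive interpolation, and verifying that the structural cancellation between the $\sigma$-stress and the $S$-term holds at this differentiated level, are the two places where errors are most likely and where the detailed Appendix computations will be needed.
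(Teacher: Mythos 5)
Your plan follows the same architecture as the paper's proof: differentiate $\A(t)$ by testing \eqref{navier-stokes} with $-\Delta u$ and pairing the $Q$-equation with $-\lambda\Delta H(Q)$, exploit the structural cancellations between the stress terms and the $S$-terms (the paper's Appendix records four such cancellations, one of which is the classical $\sigma$-versus-$\Omega$ cancellation and three of which are new $\xi$-cancellations), absorb the dissipative terms by Young's inequality, control $\|Q\|_{L^\infty}$ by Br\'ezis--Gallouet \eqref{BG}, and invoke Lemma \ref{Lp} with an adaptive exponent. That is exactly the paper's route.

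The genuine gap sits in the one step that is the actual content of the proposition: your adaptive choice $\eta\sim\ln(e+\A)$ does \emph{not} produce the nested logarithm, and with it the proof does not close. Concretely, Br\'ezis--Gallouet together with \eqref{QQ} and the uniform $H^1$ bound gives $\|Q\|_{L^\infty}^2\leq C\mathcal{B}$ with $\mathcal{B}\defeq e+\ln(e+\A)$, and in the worst term (the paper's $J_9$) H\"older with exponents $\tfrac{2}{p}$, $\tfrac{2}{1-p}$, Lemma \ref{Lp} (cost $\sqrt{p^{-1}}$) and Young's inequality leave a remainder of size
\begin{equation*}
C|\xi|\,p^{-\frac{1}{1-p}}\,\|Q\|_{L^\infty}^{\frac{2}{1-p}}\,\A(1+\A)
\;\leq\; C|\xi|\,\big[p^{-1}\mathcal{B}\big]^{\frac{1}{1-p}}\,\A(1+\A),
\qquad p=\eta^{-1}.
\end{equation*}
With your choice $\eta\sim\ln(e+\A)$ one has $p^{-1}\mathcal{B}\sim[\ln(e+\A)]^2$ while $\tfrac{1}{1-p}$ stays near $1$, so the bracket is $\sim[\ln(e+\A)]^2$: a single logarithm squared, not $[1+\ln(e+\ln(e+\A))](e+\ln(e+\A))$. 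This lands you precisely in the regime you yourself flag as fatal: since $\int^{\infty}\frac{ds}{(e+s)[e+\ln(e+s)]^2}<+\infty$, the Osgood/Gronwall step of Section 4 (divide by the nonlinearity, integrate, use $\A\in L^1(0,+\infty)$, cf. \eqref{dZ}) yields a bound on a \emph{bounded} antiderivative of $\A$ and hence no control on $\A$ at all. The correct scaling, which the paper uses, is to take the exponent logarithmic in the Br\'ezis--Gallouet factor rather than in $\A$: $p^{-1}=1+\ln\mathcal{B}=1+\ln(e+\ln(e+\A))$. Then $\tfrac{p}{1-p}=\tfrac{1}{\ln\mathcal{B}}$, so
$\big[p^{-1}\mathcal{B}\big]^{\frac{1}{1-p}}=(1+\ln\mathcal{B})\mathcal{B}\cdot\big[(1+\ln\mathcal{B})\mathcal{B}\big]^{\frac{1}{\ln\mathcal{B}}}\leq C(1+\ln\mathcal{B})\mathcal{B}$,
because $\big[(1+\ln\mathcal{B})e^{\ln\mathcal{B}}\big]^{1/\ln\mathcal{B}}=e\,(1+\ln\mathcal{B})^{1/\ln\mathcal{B}}$ is uniformly bounded for $\mathcal{B}>e$; this is exactly the double-log bracket in \eqref{hiA}. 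Note the internal inconsistency in your sketch: you correctly attribute the inner logarithm to the interpolation cost, but that cost is essentially $\eta$ itself (from $(\sqrt{\eta}\,)^2$ after Young), so for it to equal $1+\ln(e+\ln(e+\A))$ you must choose $\eta$ to \emph{be} that inner logarithm, not $\ln(e+\A)$.
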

%
%
\begin{proof}
After a lengthy calculation (see Appendix for details), we obtain
\begin{eqnarray}
&&\frac12\frac{d}{dt}\A(t)+ \nu\|\Delta{u}\|_{L^2}^2+\lambda \Gamma\|\nabla H\|_{L^2}^2\non\\
& =& \int_{\T^2}\big(u\cdot\nabla{u},\Delta{u}\big)\,dx
-2\lambda \int_{\T^2} \nabla_l u_k\nabla_l\nabla_kQ_{ij} H_{ij} dx
+\frac{\lambda}{L}\int_{\T^2} u_k\nabla_k F_{ij} H_{ij} dx\non\\
&& - 2\lambda \int_{\T^2} \nabla_j u_i(\nabla_lQ_{kj}\nabla_lH_{ik}
-\lambda \nabla_l Q_{ik}\nabla_lH_{kj}) dx -\lambda \int_{\T^2} \nabla_j u_i(\Delta Q_{kj}H_{ik}-\Delta Q_{ik}H_{kj})dx\non\\
&& +\lambda \xi\int_{\T^2}\big(D\Delta{Q}+\Delta{Q}D):H dx
+ 4\lambda \xi \int_{\T^2} \nabla_lD_{ik}\nabla_lQ_{kj}H_{ij}\,dx\non\\
&& -2\lambda \xi\int_{\T^2} \Delta\big(Q_{kl}Q_{ji}\big)\nabla_ju_iH_{kl}\,dx
-4\lambda \xi\int_{\T^2}\nabla_m\big(Q_{kl}Q_{ji}\big)\nabla_m\nabla_ju_iH_{kl}\,dx\non\\
&& -\lambda \int_{\T^2}\frac{\partial{F(Q)}}{\partial Q} (u\cdot \nabla Q) : Hdx
+\lambda \int_{\T^2}\frac{\partial{F(Q)}}{\partial Q}S(\nabla u, Q): H dx\non\\
&& +\lambda \Gamma \int_{\T^2}\frac{\partial{F(Q)}}{\partial Q} H : H dx\non\\
&:=& \sum_{i=1}^{12} J_i.\label{DAT0}
\end{eqnarray}

Below we shall estimate the terms $J_1$ through $J_{12}$ in
\eqref{DAT0}. Let us take $\epsilon\in(0,1)$ to be a small constant
that will be determined later.

The term $J_1$ can be easily estimated by using the
Gagliardo-Nirenberg inequality \eqref{GN} and the lower-order estimate \eqref{lower order bound}:
\begin{align}
J_1 &\leq \|u\|_{L^4}\|\nabla{u}\|_{L^4}\|\Delta{u}\|_{L^2}\non\\
&\leq C\|u\|_{L^2}^{\frac12}\|\nabla{u}\|_{L^2}\|\Delta{u}\|_{L^2}^{\frac32} \non\\
&\leq \epsilon\|\Delta{u}\|_{L^2}^2+C\|\nabla{u}\|_{L^2}^4.\non
\end{align}
Recalling \eqref{def of H2d} and using again \eqref{lower order bound}, we observe that
\begin{align}
\|\Delta{Q}\|_{L^2}
&\leq \frac{1}{L}\|H(Q)\|_{L^2}+\frac{1}{L}\big\|aQ+c\tr(Q^2)Q\big\|_{L^2}
\non\\
&\leq \frac{1}{L}\|H(Q)\|_{L^2}+C(\|Q\|_{H^1}) \non\\
&\leq \frac{1}{L}\|H(Q)\|_{L^2}+C.\label{QQ}
\end{align}
Meanwhile, applying \eqref{GN1} and \eqref{AG} once more, we get
\begin{align*}
\|\nabla\Delta{Q}\|_{L^2}
&\leq
\frac{1}{L}\|\nabla{H}(Q)\|_{L^2}+\frac{1}{L}\Big\|a\nabla{Q}+c\nabla\big[\tr(Q^2)Q
\big]\Big\|_{L^2} \non\\
&\leq
\frac{1}{L}\|\nabla{H}(Q)\|_{L^2}+C\big(1+\|Q\|_{L^\infty}^2\big)\|\nabla{Q}\|_{L^2}
\non\\
&\leq \frac{1}{L}\|\nabla{H}(Q)\|_{L^2}+C\big(1+\|\Delta{Q}\|_{L^2}\big) \non\\
&\leq\frac{1}{L}\|\nabla{H}(Q)\|_{L^2}+C\big(1+\|\nabla{Q}\|_{L^2}^{\frac12}\|\nabla\Delta{Q}\|_{L^2}^{\frac12}+\|\nabla{Q}\|_{L^2}\big)\non\\
&\leq\frac{1}{L}\|\nabla{H}(Q)\|_{L^2}+\frac12\|\nabla\Delta{Q}\|_{L^2}+C,
\end{align*}
which implies
\begin{equation}
\|\nabla\Delta{Q}\|_{L^2} \leq\frac{2}{L}\|\nabla{H}(Q)\|_{L^2}+C.\label{QQQ}
\end{equation}
On the other hand, we infer from Agmon's inequality \eqref{AG} and the estimates \eqref{lower order bound}, \eqref{QQ} that
\begin{equation} \label{Argon}
\|Q\|_{L^\infty}\leq C\|Q\|_{L^2}^\frac12\|Q\|_{H^2}^\frac12\leq C(1+\|\Delta{Q}\|_{L^2}^{\frac12})\leq C(1+\|H(Q)\|_{L^2}^{\frac12}).
\end{equation}
As a consequence, we obtain from the H\"{o}lder inequality, the
Gagliardo-Nirenberg inequality \eqref{GN} and the Young's inequality
that
\begin{align}
J_2 &\leq C\|\nabla{u}\|_{L^2}\|Q\|_{W^{2,4}}\|H\|_{L^4} \non\\
&\leq C\|\nabla{u}\|_{L^2}\|Q\|_{H^2}^\frac12\|Q\|_{H^3}^\frac12\|H\|_{L^2}^\frac12\|H\|_{H^1}^\frac12\non\\
&\leq
C\|\nabla{u}\|_{L^2}\big(\|\Delta{Q}\|_{L^2}^{\frac12}\|\nabla\Delta{Q}\|_{L^2}^{\frac12}+\|\Delta{Q}\|_{L^2}+1\big)
\big(\|H\|_{L^2}^{\frac12}\|\nabla{H}\|_{L^2}^{\frac12}+\|H\|_{L^2}\big)\non\\
&\leq
C\|\nabla{u}\|_{L^2}\big(\|H\|_{L^2}^{\frac12}\|\nabla{H}\|_{L^2}^{\frac12}+\|\nabla{H}\|_{L^2}^\frac12+\|H\|_{L^2}+1 \big)\big(\|H\|_{L^2}^{\frac12}\|\nabla{H}\|_{L^2}^{\frac12}+\|H\|_{L^2}\big)\non\\
&\leq
\epsilon\|\nabla{H}\|_{L^2}^2+C\|\nabla{u}\|_{L^2}^2+C\|\nabla{u}\|_{L^2}^4+C\|H\|_{L^2}^2+C\|H\|_{L^2}^4\non\\
&\leq \epsilon\|\nabla{H}\|_{L^2}^2+C\A(1+\A).\non
\end{align}
For $J_3$, using the inequalities \eqref{GN} and \eqref{AG}, we obtain that
\begin{align}
J_3& \leq \|u\|_{L^4}\|\nabla Q\|_{L^4}(1+\|Q\|_{L^\infty}^2)\| H\|_{L^2}\non\\
&\leq C\|\nabla u\|_{L^2}\|\Delta Q\|_{L^2}^\frac12\|\nabla Q\|_{L^2}^\frac12(1+\|\Delta Q\|_{L^2})\|H\|_{L^2}\non\\
&\leq C\|\nabla u\|_{L^2} (1+\|\nabla \Delta Q\|_{L^2}^\frac12\|\nabla Q\|_{L^2}^\frac12)\|\Delta Q\|_{L^2}^\frac12\|H\|_{L^2}\non\\
&\leq \epsilon\|\nabla{H}\|_{L^2}^2 + C\|\nabla u\|_{L^2}^4+ C\|H\|_{L^2}^4+C\|H\|_{L^2}^2\non\\
&\leq \epsilon\|\nabla{H}\|_{L^2}^2+C\A(1+\A).\non
\end{align}
And terms $J_4$ and $J_5$ can be estimated as follows
\begin{align}
J_4&\leq C\|\nabla u\|_{L^4}\|\nabla Q\|_{L^4}\|\nabla H\|_{L^2}\non\\
&\leq C\|u\|_{L^2}^\frac12\|\Delta u\|_{L^2}^\frac12 \|\nabla Q\|_{L^2}^\frac12\|\Delta Q\|_{L^2}^\frac12\|\nabla H\|_{L^2}
\non\\
&\leq \epsilon\|\Delta u\|_{L^2}^2+\epsilon\|\nabla H\|_{L^2}^2+ C\|\nabla u\|_{L^2}^2+ C\|H\|_{L^2}^4\non\\
&\leq \epsilon\|\Delta u\|_{L^2}^2+\epsilon\|\nabla H\|_{L^2}^2+C\A(1+\A),\non
\end{align}
\begin{align}
J_5&\leq C\|\nabla u\|_{L^4}\|\Delta Q\|_{L^2}\|H\|_{L^4}\non\\
&\leq C\|\nabla u\|_{L^2}^\frac12\|\Delta u\|_{L^2}^\frac12(1+\|H\|_{L^2})(\|H\|_{L^2}^\frac12\|\nabla H\|_{L^2}^\frac12+\|H\|_{L^2})\non\\
&\leq \epsilon\|\Delta u\|_{L^2}^2+\epsilon\|\nabla H\|_{L^2}^2+ C\|\nabla{u}\|_{L^2}^2+C\|\nabla{u}\|_{L^2}^4+C\|H\|_{L^2}^2+C\|H\|_{L^2}^4\non\\
&\leq \epsilon\|\Delta u\|_{L^2}^2+\epsilon\|\nabla H\|_{L^2}^2+C\A(1+\A).\non
\end{align}
Besides, for $J_{10}$ and $J_{12}$ the following inequality holds
\begin{align}
& J_{10}+J_{12}\non\\
\leq &\ C\big(1+\|Q\|_{L^\infty}^2\big)\big(\|H\|_{L^2}+\|u\|_{L^\infty}\|\nabla{Q}\|_{L^2}\big) \|H\|_{L^2} \non\\
\leq &\
C(1+\|H\|_{L^2})\Big(\|H\|_{L^2}+\|u\|_{L^2}^{\frac12}\|\Delta{u}\|_{L^2}^{\frac12}\Big)\|H\|_{L^2}\non\\
\leq &\ C(1+\|H\|_{L^2})\|H\|_{L^2}^2+C\|H\|_{L^2}^2\|u\|_{L^2}^{\frac12}\|\Delta{u}\|_{L^2}^{\frac12}
+C\|H\|_{L^2}\|\nabla{u}\|_{L^2}^{\frac12}\|\Delta{u}\|_{L^2}^{\frac12}\non\\
\leq&\ \epsilon\|\Delta{u}\|_{L^2}^2+C\|\nabla{u}\|_{L^2}^2+C\|\nabla{u}\|_{L^2}^4+C\|H\|_{L^2}^2+C\|H\|_{L^2}^4\non\\
\leq &\ \epsilon\|\Delta u\|_{L^2}^2+C\A(1+\A).\non
\end{align}

It remains to estimate the terms $J_6, ..., J_{9}$ and $J_{11}$ involving the parameter $\xi$, which all vanish when $\xi=0$.
Thus, we only need to consider the case $\xi\neq 0$ (with $\xi$ being fixed).

The term $J_6$ can be estimated in the same way as for $J_2$, that is
\begin{align}
J_6&\leq C|\xi|\|\nabla u\|_{L^2}\|\Delta Q\|_{L^4}\|H\|_{L^4}\non\\
&\leq \epsilon\|\nabla{H}\|_{L^2}^2+C(1+|\xi|^2)\A(1+\A).\non
\end{align}
For $J_7$, using the H\"older inequality, \eqref{GN}, \eqref{QQ} and Young's inequality, we have
\begin{align}
J_7 &\leq C|\xi|\|\Delta u\|_{L^2}\|\nabla Q\|_{L^4}\|H\|_{L^4}\non\\
&\leq C|\xi|\|\Delta u\|_{L^2}\|\nabla Q\|_{L^2}^\frac12\|\Delta Q\|_{L^2}^\frac12\big(\|H\|_{L^2}^\frac12\|\nabla H\|_{L^2}^\frac12+\|H\|_{L^2}\big)\non\\
&\leq \epsilon\|\Delta u\|_{L^2}^2+\epsilon\|\nabla H\|_{L^2}^2+C(1+|\xi|^4)(\|H\|_{L^2}^2+\|H\|_{L^2}^4)\non\\
&\leq \epsilon\|\Delta u\|_{L^2}^2+\epsilon\|\nabla H\|_{L^2}^2+C(1+|\xi|^4)\A(1+\A).\non
\end{align}
Next, we first treat $J_{11}$ and postpone the estimates for terms $J_8$, $J_9$ that are more involved.
\begin{align}
J_{11}
&\leq C\big(1+\|Q\|_{L^\infty}^2\big)\|S(\nabla{u},Q)\|_{L^2}\|H\|_{L^2} \non\\
&\leq
C(1+\|H\|_{L^2})\big(1+\|Q\|_{L^\infty}\big)^2\|\nabla{u}\|_{L^2}\|H\|_{L^2}\non\\
&\leq C(1+\|H\|_{L^2})^2\|\nabla{u}\|_{L^2}\|H\|_{L^2}\non\\
&\leq C\|\nabla{u}\|_{L^2}\|H\|_{L^2}+ C\|H\|_{L^2}^3\|\Delta{u}\|_{L^2}^\frac12\|u\|_{L^2}^\frac12\non\\
&\leq \epsilon \|\Delta{u}\|_{L^2}^2+C\|\nabla{u}\|_{L^2}^2+C\|H\|_{L^2}^2+C\|H\|_{L^2}^4\non\\
&\leq \epsilon \|\Delta{u}\|_{L^2}^2+ C\A(1+\A).\non
\end{align}
Now, let us consider the term $J_8$. By a similar argument for $J_5$ and using the Br\'{e}zis-Gallouet inequality \eqref{BG},
we obtain that
\begin{align}
J_8 &\leq 2|\xi|\|\nabla{u}\|_{L^4}\|H\|_{L^4}\|\Delta(QQ)\|_{L^2} \non\\
&\leq
C|\xi|\|\nabla{u}\|_{L^4}\|H\|_{L^4}\big(\|Q\|_{L^\infty}\|\Delta{Q}\|_{L^2}+\|\nabla{Q}\|_{L^4}^2  \big)\non\\
&\leq
C|\xi|\|\nabla{u}\|_{L^4}\|H\|_{L^4}\big(\|Q\|_{L^\infty}\|\Delta{Q}\|_{L^2}+\|\Delta Q\|_{L^2}\|\nabla Q\|_{L^2} \big)\non\\
&\leq
C|\xi|\|\nabla u\|_{L^2}^\frac12\|\Delta u\|_{L^2}^\frac12(1+\|H\|_{L^2})(\|H\|_{L^2}^\frac12\|\nabla H\|_{L^2}^\frac12+\|H\|_{L^2})\big(\|Q\|_{L^\infty}+1 \big) \non\\
&\leq \epsilon \|\Delta u\|^2+\epsilon \|\nabla H\|^2\non\\
&\quad +C(|\xi|+|\xi|^4)(1+\|Q\|_{L^\infty}^2)(\|\nabla u\|_{L^2}^4+\|\nabla u\|_{L^2}^2+\|H\|_{L^2}^4+\|H\|_{L^2}^2)\non\\
&\leq  \epsilon \|\Delta u\|^2+\epsilon \|\nabla H\|^2 +C(|\xi|+|\xi|^4)\mathcal{B}\A(1+\A),\non
\end{align}
where we have set
\begin{equation}
\mathcal{B}=e+\ln(e+\A)>e.\label{BB}
\end{equation}

Concerning the last term $J_9$, by the H\"older inequality we have, for any $p\in (0,1)$,
\begin{align}
J_9 &\leq
4|\xi|\|Q\|_{L^\infty}\|\nabla{Q}\|_{L^\frac2p}\|\nabla^2u\|_{L^2}\|H\|_{L^\frac{2}{1-p}}.\label{J9}
\end{align}
For any $p\in (0,1/2)$, applying the $L^p$-interpolation inequality Lemma \ref{Lp}, with $\eta=p^{-1}>1$ and $\eta=(1-p)^{-1}\in (1,2)$, respectively, we deduce that
\begin{align}
\|\nabla Q\|_{L^\frac2p}
&\leq C\sqrt{p^{-1}}\|Q\|_{H^2}^{1-p}\|\nabla Q\|_{L^2}^p\non\\
&\leq C\sqrt{p^{-1}}\|H\|_{L^2}^{1-p}\|\nabla Q\|_{L^2}^p+ C\sqrt{p^{-1}}\non\\
&\leq C\sqrt{p^{-1}}\|H\|_{L^2}^{1-p}+ C\sqrt{p^{-1}},
\end{align}
and
\begin{align}
\|H\|_{L^\frac{2}{1-p}}
&\leq C\sqrt{(1-p)^{-1}}\|H\|_{H^1}^{p}\|H\|_{L^2}^{1-p}\non\\
&\leq C\sqrt{(1-p)^{-1}}\|\nabla H\|_{L^2}^{p}\|H\|_{L^2}^{1-p}+ C\sqrt{(1-p)^{-1}}\|H\|_{L^2}\non\\
&\leq C\|\nabla H\|_{L^2}^{p}\|H\|_{L^2}^{1-p}+ C\|H\|_{L^2}.\label{eH}
\end{align}
Hence, by the Br\'{e}zis-Gallouet inequality \eqref{BG}, estimates \eqref{J9}-\eqref{eH} and the Young's inequality, we have
\begin{align}
J_9&\leq C|\xi|\sqrt{p^{-1}}\|\Delta u\|_{L^2}\|Q\|_{L^\infty} \non\\
&\quad \times \big(\|\nabla H\|_{L^2}^p\|H\|_{L^2}^{2(1-p)}+\|H\|_{L^2}^{2-p}
+\|\nabla H\|_{L^2}^p\|H\|_{L^2}^{1-p}+\|H\|_{L^2}\big)\non\\
&\leq \epsilon|\xi|\|\Delta u\|_{L^2}^2+\epsilon|\xi|\|\nabla H\|_{L^2}^2
+C|\xi|p^{-\frac{1}{1-p}}\|Q\|_{L^\infty}^\frac{2}{1-p}(\|H\|_{L^2}^4+\|H\|_{L^2}^2)\non\\
&\quad + C|\xi|p^{-1}\|Q\|^2_{L^\infty}(\|H\|_{L^2}^4+\|H\|_{L^2}^2)\non\\
&\leq \epsilon|\xi|\|\Delta u\|_{L^2}^2+\epsilon|\xi|\|\nabla H\|_{L^2}^2
+C|\xi|p^{-\frac{1}{1-p}}[e+\ln(e+\A)]^\frac{1}{1-p}(\|H\|_{L^2}^4+\|H\|_{L^2}^2)\non\\
&\quad + C|\xi|p^{-1}[e+\ln(e+\A)](\|H\|_{L^2}^4+\|H\|_{L^2}^2)\non\\
&:=\epsilon|\xi|\|\Delta u\|_{L^2}^2+\epsilon|\xi|\|\nabla H\|_{L^2}^2+J_{9a}+J_{9b},\quad \forall\, p\in\big(0, \frac12\big).
\label{J92}
\end{align}
Since the constant $C$ in the estimate \eqref{J92} is independent of the parameter $p\in (0, 1/2)$, then, in the spirit of \cite{PZ11},
we can take the exponent
$$p=(1+\ln \mathcal{B})^{-1},$$ where $\mathcal{B}$ is given in \eqref{BB}. We note that
with this choice $p$ may not be a constant, but it is always true that
$p \in (0, 1/2)$. Then it follows from \eqref{J92} that
\begin{align}
J_{9b}&\leq C|\xi| (1+\ln \mathcal{B})\mathcal{B}\A(1+\A),
\end{align}
and
\begin{align}
J_{9a}&\leq C|\xi|p^{-\frac{1}{1-p}}[e+\ln(e+\A)]^\frac{1}{1-p}(\|H\|_{L^2}^4+\|H\|_{L^2}^2)\non\\
&\leq C|\xi| [(1+\ln \mathcal{B})\mathcal{B}]^{1+\frac{1}{\ln B}} \A(1+\A)\non\\
&\leq C|\xi| (1+\ln \mathcal{B})\mathcal{B}\A(1+\A),\label{J9a}
\end{align}
where we have used the following simple fact such that the quantity
$$
[(1+\ln \mathcal{B})\mathcal{B}]^\frac{1}{\ln B}=[(1+\ln \mathcal{B})e^{\ln \mathcal{B}}]^\frac{1}{\ln \mathcal{B}}
=e(1+\ln \mathcal{B})^\frac{1}{\ln \mathcal{B}}
$$
is uniformly bounded for all $\mathcal{B}>e$.
As a consequence of \eqref{J92}-\eqref{J9a}, we deduce that
\begin{align}
J_9&\leq \epsilon|\xi|\|\Delta u\|_{L^2}^2+\epsilon|\xi|\|\nabla H\|_{L^2}^2+C|\xi| (1+\ln \mathcal{B})\mathcal{B}\A(1+\A).\label{J93}
\end{align}

Now we take the small constant $$\epsilon\in\Big(0, \ \ \frac{\min\{\nu, \lambda\Gamma\}}{2(7+|\xi|)}\Big).$$
From \eqref{DAT0} and the above estimates for terms $J_1$,... $J_{12}$, it follows that
\begin{align}\label{dA1}
&\frac{d\A}{dt} + \frac{\nu}{2}\|\Delta{u}\|_{L^2}^2+ \frac{\lambda\Gamma}{2}\|\nabla{H}\|_{L^2}^2
\leq C_\ast\big[1+|\xi|(1+\ln \mathcal{B})\mathcal{B}\big]\A(1+\A),
\end{align}
which easily implies the conclusion \eqref{hiA}. The proof is complete.
\end{proof}

\begin{remark}\label{high2Dxi0}
If $\xi=0$, the inequality \eqref{hiA} reduces to
\begin{align}
&\frac{d}{dt}\A(t) \leq  C_*[e+\A(t)]\A(t),\label{hiAxi0}
\end{align}
which is the same as the higher-order energy inequality derived in \cite[Lemma 7.1]{ADL15}.
\end{remark}

\section{Global Strong Solutions in $2D$}
\setcounter{equation}{0}

In this section, we show that starting from initial data with higher regularity, the problem \eqref{navier-stokes}-\eqref{IC} admits a unique global strong solution.

\subsection{Semi-Galerkin approximation scheme}

We can work with a semi-Galerkin scheme in the periodic setting, which
is similar to \cite{LL95} for the simplified Ericksen-Leslie system
for incompressible nematic liquid crystal flow. For the convenience
of the readers, we briefly describe it below. Recalling the classical
spectral theorem for compact operators in Hilbert spaces and
standard results for the stationary Stokes system, we have the
following results on eigenfunctions of the Stokes operator
$\mathcal{S}$ for $u$. Let $\{ v_n \}_{n=1}^\infty$ be the
eigenvectors of the Stokes operator $\mathcal{S}$ in the torus
$\T^2$ with zero mean,
 \begin{align}
 &\mathcal{S} v_n= \kappa_n v_n,\quad \nabla\cdot v_n = 0, \quad \int_{\T^2}v_n(x)\,dx=0, \ \ \mbox{in } \T^2,\non\\
 & v_n(x+e_i)=v_n(x),\quad x\in \T^2,\non
 \end{align}
 where $0<\kappa_1\leq\kappa_2\leq ...\nearrow +\infty$ are eigenvalues. The eigenvectors
 ${v_n}$ are smooth and the sequence $\{v_n\}_{n=1}^\infty$ forms an orthogonal
basis of $\mathbf{H}$ as well as $\mathbf{V}$ (see e.g., \cite{Te}).

Taking an arbitrary but fixed integer $N \in \mathbb{N}$, we
consider the finite-dimensional space $V_N = {\rm
span}\{v_n\}_{n=1}^N$ along with the orthogonal projection operators
$\Pi_{N}: \mathbf{H}\to V_N$, which are bounded linear operators
with norms bounded by one. For any $T>0$, we seek approximations of
solutions to the problem \eqref{navier-stokes}-\eqref{IC}. The
approximation of velocity $u^N$ takes the form
$$u^{N}=\sum_{i=1}^N h_i(t)v_i(x),$$
which solves
\begin{align}
&\int_{\T^2} (u^{N})_t\cdot v_k\,dxdt-\int_{\T^2} \left(u^{N}\otimes  u^{N}\right) : \nabla v_k dx + \nu \int_{\T^2} \nabla u^{N}: \nabla v_k\,dx\nonumber\\
&\quad =-\int_{\T^2} (\sigma^{N}+\tau^{N}):\nabla v_k\,dx,\quad \forall\,t \in (0,T), \label{approx navier-stokes}
\end{align}
for any $k=1,...,N$. In \eqref{approx navier-stokes}, the approximations of stress tensors are given by
\begin{eqnarray}
\tau^{N}&\defeq&-\xi\big(Q^{N}+\frac{1}{2}\Id\big) H^{N}(Q^N)-\xi H^{N}(Q^N)\big(Q^{N}+\frac{1}{2}\Id\big)\nonumber\\
&& +2\xi\big(Q^{N}+\frac{1}{2}\Id\big)\tr(Q^{N}H^{N}(Q^N))-L\nabla{Q}^{N}\odot\nabla{Q}^{N},\label{symmetic tensor appa}\\
\sigma^{N}&\defeq& Q^{N}H^{N}(Q^N)-H^{N}(Q^N)Q^{N},\label{skew-symmetric tensor appa}
\end{eqnarray}
where
\begin{align}
H^{N}(Q^N)&\defeq L\Delta{Q}^{N}-aQ^{N}-cQ^{N}\tr((Q^{N})^2),\label{def of H appa}
\end{align}
On the other hand, the approximate function $Q^{N}$ is determined in terms of $u^{N}$ as the unique solution of the parabolic system
\begin{align}
Q^{N}_t+u^{N}\cdot\nabla Q^{N}-S(\nabla{u}^{N}, Q^{N})&=\Gamma{H}^{N}(Q^N),\quad (x,t)\in \T^{2} \times \mathbb{R}^+, \label{approx Q equ}
\end{align}
 where
\begin{align}
S^N(\nabla{u}^{N}, Q^{N})&\defeq (\xi D^{N}+\Omega^N)\big(Q^{N}+\frac12 \Id
\big)+\big(Q^{N}+\frac12 \Id \big)(\xi D^{N}-\Omega^{N})\nonumber\\
&\quad -2\xi\big(Q^{N}+\frac12 \Id\big)\tr(Q^{N}\nabla{u}^{N})\label{S1 appa}
\end{align}
with
$$D^{N}=\frac{\nabla u^{N}+\nabla^Tu^{N}}{2},\quad \Omega^{N}=\frac{\nabla
u^{N}-\nabla^Tu^{N}}{2}.$$
The initial conditions are given by
\begin{equation}
u^{N}|_{t=0}= \Pi_{N} u_0, \quad Q^{N}|_{t=0}= Q_{0}, \quad x\in \T^2. \label{approx IC}
\end{equation}

\subsection{Proof of Theorem \ref{strong2d}}

\noindent The proof for the existence of global strong solutions
consists of several steps. \medskip

\textbf{Existence of approximate solutions}. For any fixed integer $N$, we have the following result on local existence of the approximate solution $(u^{N}, Q^{N})$:

\begin{proposition} \label{Theorem for the approx prob}
Suppose $u_0 \in\mathbf{V}$, $Q_0 \in H^2(\T^2, S_0^{(2)})$. For any $N\in\mathbb{N}$, there exists $T_N
> 0$ depending on $\|u_0\|_{H^1}$, $\|Q_0\|_{H^2}$ and $N$ such that the
approximate problem \eqref{approx navier-stokes}-\eqref{approx IC}
admits a solution $(u^N, Q^N)$ satisfying
\begin{align*}
 & u^N \in L^{\infty}(0, T_N; \mathbf{V}) \cap L^2(0, T_N; H^2(\T^2,\RR^2))\cap H^1(0,T_N; \mathbf{H}),\\
 & Q^N \in L^{\infty}(0, T_N; H^2(\T^2,S^{(2)}_0))\cap L^2(0, T_N; H^3(\T^2,S^{(2)}_0))\cap H^1(0,T_N; H^1(\T^2,S^{(2)}_0)).
 \end{align*}
\end{proposition}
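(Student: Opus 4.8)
The plan is to decouple the approximate system \eqref{approx navier-stokes}--\eqref{approx IC} into two subproblems and then reconnect them through a contraction argument on a short time interval $[0,T_N]$. A crucial simplification is that the velocity unknown lives in the finite-dimensional space $V_N=\mathrm{span}\{v_1,\dots,v_N\}$: writing $u^N=\sum_{i=1}^N h_i(t)v_i$ and testing \eqref{approx navier-stokes} against $v_k$ turns the velocity equation into a system of $N$ ordinary differential equations for $h=(h_1,\dots,h_N)$, while all Sobolev norms of $u^N$ are mutually equivalent on $V_N$ (with constants depending on the eigenvalues $\kappa_1,\dots,\kappa_N$, hence on $N$). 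This norm equivalence is exactly what forces the existence time $T_N$ to depend on $N$.

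First I would treat the $Q$-subproblem. For a fixed $u^N\in C([0,T];V_N)$ --- which is smooth in $x$, so that $\nabla u^N$ and all its spatial derivatives are controlled by $\|u^N\|_{L^2}$ times $N$-dependent constants --- equation \eqref{approx Q equ} is a semilinear parabolic system whose principal part is $\Gamma L\Delta Q^N$ and whose remaining terms, namely the transport term $u^N\cdot\nabla Q^N$, the stretching/rotation term $S^N(\nabla u^N,Q^N)$ (linear in $\nabla u^N$, at most quadratic in $Q^N$), and the cubic term $c\,Q^N\tr((Q^N)^2)$, are all lower order. I would obtain a unique solution $Q^N$ on $[0,T]$ with
$$
Q^N\in L^\infty(0,T;H^2)\cap L^2(0,T;H^3)\cap H^1(0,T;H^1)
$$
either via $L^p$-maximal regularity for the heat semigroup together with a Picard iteration on the polynomial nonlinearity, or, in keeping with the Galerkin philosophy of this section, via a secondary Galerkin approximation in $Q$ closed by $H^2$-energy estimates (testing the equation by $\Delta^2 Q^N$ and using the two-dimensional Gagliardo--Nirenberg and Agmon inequalities \eqref{GN}, \eqref{AG} to absorb the nonlinear terms). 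In the same step I would record the Lipschitz-type estimate showing that the map $u^N\mapsto Q^N$ is Lipschitz continuous from $C([0,T];V_N)$ into $C([0,T];H^1)$ on bounded sets, which will be needed to close the outer fixed point.

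Next I would solve the $u$-subproblem. Substituting $Q^N$ into the right-hand side of \eqref{approx navier-stokes}, the coefficient vector $h(t)$ satisfies $h_k'(t)=-\nu\kappa_k h_k+F_k(h,Q^N)$, where $F_k$ collects the convection term and the projections of the stress tensors $\sigma^N,\tau^N$ against $\nabla v_k$. The only point to check is that this forcing is well defined and locally Lipschitz in $h$: since $\sigma^N$ and $\tau^N$ depend on $H^N(Q^N)=L\Delta Q^N-aQ^N-cQ^N\tr((Q^N)^2)$, they require $\Delta Q^N\in L^2$, together with $\nabla Q^N\in L^4$ for the term $L\nabla Q^N\odot\nabla Q^N$ --- precisely the $H^2$-regularity of $Q^N$ produced in the previous step (the $L^4$ control of $\nabla Q^N$ in $2D$ following from \eqref{GN}). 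Cauchy--Lipschitz then yields a unique local solution $\tilde u^N$.

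The fixed point closes the argument: the composition $\mathcal{T}:u^N\mapsto Q^N\mapsto\tilde u^N$ maps a ball in $C([0,T];V_N)$ (centred at the constant trajectory $\Pi_N u_0$) into itself and is a contraction provided $T=T_N$ is chosen small enough depending on $\|u_0\|_{H^1}$, $\|Q_0\|_{H^2}$ and $N$; here one uses short-time smallness together with the Lipschitz dependence of $Q^N$ on $u^N$ and the $N$-dependent norm equivalence on $V_N$. Its unique fixed point is the desired approximate solution, and a final bootstrap from the two subproblems gives the stated regularity classes. I expect the main obstacle to be the $Q$-subproblem at the $H^2$ level: the coupling feeds back into the velocity equation through the second derivatives $\Delta Q^N$ hidden in $H^N(Q^N)$, so one cannot afford to lose regularity there, and the cubic term $c\,Q^N\tr((Q^N)^2)$ must be controlled carefully --- it is precisely the short time and the smoothing of the parabolic operator that make both the $H^2$-estimate and the Lipschitz dependence available.
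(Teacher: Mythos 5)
Your overall architecture is the same semi-Galerkin decoupling that the paper uses: freeze $u^N\in C([0,T];V_N)$, solve the parabolic $Q$-equation \eqref{approx Q equ}, insert $Q[u^N]$ into the finite-dimensional velocity equation \eqref{approx navier-stokes}, and close with a fixed point, with $T_N$ depending on $N$ through the norm equivalence on $V_N$. The difference is that the paper closes with Schauder's fixed point theorem (citing \cite{LL95}), while you close with a Banach contraction; that is a legitimate variant, but it demands quantitative stability estimates that the Schauder route never needs, and this is exactly where your plan, as written, has a gap.

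You record the Lipschitz dependence of $u^N\mapsto Q[u^N]$ only in $C([0,T];H^1)$, but the forcing of your ODE system consists of the pairings of $\sigma^N+\tau^N$ with $\nabla v_k$, and these stresses contain $H^N(Q)=L\Delta Q-aQ-cQ\,\tr(Q^2)$. Hence the difference of forcings for two inputs $u_1,u_2$ contains terms such as
$L\int_{\T^2}\big(Q_1\,\Delta(Q_1-Q_2)\big):\nabla v_k\,dx$,
which are controlled by $\|\Delta(Q_1-Q_2)\|_{L^2}$ and \emph{not} by the $C([0,T];H^1)$ distance you have recorded; so the contraction estimate does not close with the ingredients you list. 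You even flag the danger yourself (``one cannot afford to lose regularity there'') without resolving it. Two standard repairs are available. (i) Integrate by parts once in each stress pairing, using that the $v_k$ are smooth eigenfunctions: then $\int_{\T^2}(\sigma^N+\tau^N):\nabla v_k\,dx$ is rewritten in terms of $Q$ and $\nabla Q$ only, paired against $\nabla v_k$ and $\nabla^2 v_k$, and your $H^1$-Lipschitz estimate suffices. (ii) Alternatively, upgrade the stability estimate to $C([0,T];H^2)$ by performing the energy estimate for the difference equation at the level of $\Delta(Q_1-Q_2)$, which is available thanks to the uniform $L^\infty(0,T;H^2)\cap L^2(0,T;H^3)$ bounds from your first step. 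Either repair makes the contraction close. Note finally that the paper's Schauder argument sidesteps this issue entirely: there one only needs continuity and compactness of the composed map, which come for free from parabolic smoothing in the $Q$-equation and the finite dimensionality of $V_N$, with no Lipschitz estimate at all.
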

Proposition \ref{Theorem for the approx prob} can be proved by a classical Schauder's argument (see e.g., \cite{LL95}).
Indeed, given a vector $\tilde{u} \in C([0,T]; V_N)$, then we find a $Q= Q[\tilde{u}]$ by solving the equation \eqref{approx Q equ} with $u^N$ replaced by $\tilde{u}$. Inserting $Q[\tilde{u}]$ back into the equation \eqref{approx navier-stokes}, then the solution $u$ to the resulting ODE system defines a mapping $\mathcal{T}: \tilde{u} \mapsto {\cal T}[\tilde{u}]=u$. It is standard to show that ${\cal T}$ admits a fixed point by means of the classical Schauder's argument on $(0,T_N)$, with certain $T_N>0$ depending on $\|u_0\|_{H^1}$, $\|Q_0\|_{H^2}$ and $N$.  We leave the detailed proof to interested readers.
\begin{remark}
Since we are working in the periodic domain $\T^2$, by the classical regularity theory for parabolic equations (cf. \cite{LA1}) and a bootstrap argument,
we can see that $(u^N, Q^N)$ is $C^\infty$ in the interior of $\T^2\times (0, T_N)$.
\end{remark}

In order to prove the existence of global strong solutions, we need to derive some uniform estimates for approximate solutions $(u^N, Q^N)$
that are independent of the approximation parameter $N$ as well as the time $t$.

\medskip
\textbf{Lower-order estimates}.  A similar argument like in \cite{ADL14} yields that the approximate solutions satisfy the following energy identity
\begin{equation}
\frac{d}{dt}\E^N(t)=-\nu\int_{\T^2} |\nabla{u}^N|^2\,dx-\lambda \Gamma\int_{\T^2} |H^N(Q^N)|^2\,dx\leq 0,\quad \forall\, t\in[0,T_N).\label{BELN}
\end{equation}
where
\begin{equation}\label{total energyN}
\E^N(t)\defeq \frac12\int_{\T^2} |u^N|^2(x,t)\,dx+\lambda \F(Q^N(t)).\non\\
\end{equation}
As in Lemma \ref{lower order bound}, the energy identity \eqref{BELN} provides uniform estimates for $u^N$ and $Q^N$ such that
\begin{align}
&\|u^N(t)\|_{L^2}+\|Q^N(t)\|_{H^1}\leq C, \qquad \forall\, t\in[0,T_N),\label{lowN1}\\
&\int_0^t\int_{\T^d} |\nabla{u}^N(\tau)|^2+ |\Delta{Q}^N(\tau)|^2\,dxd\tau < C(1+t),\quad \forall\, t\in[0,T_N),\label{lowN2}
\end{align}
where the constant $C>0$ depends on $\|u_0\|, \|Q_0\|_{H^1}, L, \lambda, \nu, \Gamma, a, c$ and
$\T^2$, but it is independent of the parameter $N$ and the time $t$.

\medskip
\textbf{Higher-order estimates}. It is easy to see that the
calculations we made in Section 3 for smooth solutions $(u, Q)$ to the
problem \eqref{navier-stokes}-\eqref{IC} still hold for the
approximate solutions $(u^N, Q^N)$. Thus, for  $(u^N, Q^N)$, we introduce the quantity
\begin{equation} \label{def of AN}
\A_N(t)=\|\nabla{u}^N(t)\|_{L^2}^2+\lambda\|H^N(Q^N(t))\|_{L^2}^2.
\end{equation}
In particular, we infer from \eqref{BELN} that
\begin{equation} \label{K0a}
\int_0^t\int_{\T^2} \nu |\nabla{u}^N(\tau)|^2+\lambda \Gamma |H^N(Q^N(\tau))|^2\,dxd\tau \leq K, \quad \forall\, t\in[0,T_N),
\end{equation}
where $K>0$ is a constant that only depends on $\|u_0\|_{L^2}, \|Q_0\|_{H^1}, L, \lambda, a, c, \T^2$. Then we have
\begin{equation}
\int_0^{t}\A_N(\tau)\,d\tau \leq \frac{K}{\min\{\nu,  \Gamma\}},\quad \forall\, t\in[0,T_N).\label{intAN}
\end{equation}

On the other hand, using the lower-order estimate \eqref{lowN1}, for any $\xi\in \RR$, we can get the following higher-order energy inequality for all $ t\in [0,T_N)$:
\begin{eqnarray}
&&\frac{d}{dt}\A_N(t) + \frac{\nu}{2}\|\Delta{u}^N\|_{L^2}^2+ \frac{\lambda\Gamma}{2}\|\nabla{H}^N(Q^N)\|_{L^2}^2\non\\
&\leq& C_\ast\big[1+|\xi|\big[1+\ln(e+\ln(e+\A_N(t)))\big](e+\ln(e+\A_N(t)))\big][e+\A_N(t)]\A_N(t),\label{hiAN}
\end{eqnarray}
where $C_\ast>0$ is a constant that depends on $\nu, \Gamma, L, \lambda, a, c,
\T^2, \|u_0\|_{L^2}, \|Q_0\|_{H^1}$, $\xi$, but is independent of $N$ and $t$.

Now we consider two cases.

\textbf{Case 1}. If $\xi=0$, then we infer from \eqref{hiAN} that
\begin{align}
\frac{d}{dt}\ln[e+\A_N(t)]\leq C_\ast\A_N(t),\non
\end{align}
which implies
\begin{align}
\A_N(t) \leq [e+\A_N(0)] e^{C_*\int_0^t \A_N(\tau) d\tau}\leq [e+\A_N(0)]e^{\frac{C_*K}{\min\{\nu,  \Gamma\}}} \leq C, \quad \forall\, t\in [0,T_N),\non
\end{align}
where
$C>0$ is a constant that depends on $\nu, \Gamma, L, \lambda, a, c, \T^2, \|u_0\|_{\mathbf{V}}, \|Q_0\|_{H^2}$.

\textbf{Case 2}. If $\xi\neq 0$, then we deduce from \eqref{hiAN} that
\begin{align}
\frac{d}{dt}\ln \mathcal{Z}_N(t)\leq C_*(1+|\xi|)\A_N(t),\label{dZ}
\end{align}
where
$$\mathcal{Z}_N(t)=1+\ln[1+\ln[e+\A_N(t)]].$$
Integrating \eqref{dZ} with respect to time and using \eqref{intAN}, we have
\begin{align}
\ln \mathcal{Z}_N(t)
&\leq \ln \mathcal{Z}_N(0)+C_*(1+|\xi|)\int_0^t\A_N(\tau)d\tau\non\\
& \leq \ln \mathcal{Z}_N(0)+\frac{(C_*)^2 K(1+|\xi|)}{\min\{\nu,  \Gamma\}},\qquad \forall\, t\in [0,T_N),\non
\end{align}
which again yields that
\begin{align}
\A_N(t) \leq C, \quad \forall\, t\in [0,T_N).
\end{align}

For both cases, after integrating the differential inequality \eqref{hiAN} with respect to time, we obtain that
\begin{equation}
\int_0^t \big(\|\Delta{u}^N(\tau)\|_{L^2}^2+ \|\nabla{H}^N(Q^N(\tau))\|_{L^2}^2\big)d\tau \leq C, \quad \forall\, t\in [0,T_N).\non
\end{equation}
As a consequence, we have the following uniform higher-order  estimates:
\begin{align}
&\|u^N(t)\|_{H^1}+\|Q^N(t)\|_{H^2}\leq C, \qquad \forall\, t\in[0,T_N),\label{highN1}\\
&\int_0^t\int_{\T^2} |\Delta{u}^N(\tau)|^2+ |\nabla
\Delta{Q}^N(\tau)|^2\,dxd\tau < C(1+t),\quad \forall\, t\in
[0,T_N),\label{highN2}
\end{align}
where the constant $C>0$ depends on $\|u_0\|_{H^1}, \|Q_0\|_{H^2}, L, \lambda, \nu, \Gamma, a, c, \xi$ and
$\T^2$, but it is independent of the parameter $N$ and the time $t$.

\medskip
\textbf{Passage to the limit $N\to \infty$}. First, we can deduce
from the above uniform-in-time lower-order and higher-order
estimates \eqref{lowN1}, \eqref{highN1} that the approximate
solutions $(u^N, Q^N)$ can not blow up in finite time. Thus, for any
$N\in \mathbb{N}$, it holds $T_N=+\infty$ such that every
approximate solution $(u^N, Q^N)$ can be extended to the time
interval $[0,T]$ for arbitrary $T>0$.

Second, since the uniform estimates \eqref{lowN1}, \eqref{lowN2},
\eqref{highN1}, \eqref{highN2} are also independent of the
approximation parameter $N$, we infer from the equations
\eqref{approx navier-stokes}, \eqref{approx Q equ} and the H\"older
inequality that for any $T>0$ and $N\in \mathbb{N}$,
\begin{align*}
 & u^N \in L^{\infty}(0, T; \mathbf{V}) \cap L^2(0, T; H^2(\T^2,\RR^2))\cap H^1(0,T; \mathbf{H}),\\
 & Q^N \in L^{\infty}(0, T; H^2(\T^2,S^{(2)}_0))\cap L^2(0, T; H^3(\T^2,S^{(2)}_0))\cap H^1(0,T; H^1(\T^2,S^{(2)}_0)).
 \end{align*}
The above uniform estimates together with standard weak compactness results and the Aubin-Lions compactness lemma (see e.g., \cite[Cor. 4, Sec. 8]{SI}) enable us to pass to the limit as $N\rightarrow \infty$ (up to a subsequence) to obtain a limit pair $(u, Q)$, which turns
out to be a global strong solution to the original Navier-Stokes and Q-tensor system \eqref{navier-stokes}-\eqref{IC}. Since the argument is standard (cf. \cite{ADL15}), we omit the details here.

\medskip

\textbf{Uniqueness}. The uniqueness of strong solutions is a direct consequence of \cite[Section 5]{PZ11}, where a weak-strong uniqueness
result is given in $\RR^2$.

Let $(u_i, Q_i)$, $i=1,2$ be two global strong solutions of the problem \eqref{navier-stokes}-\eqref{IC}
subject to initial data $(u_{i0}, Q_{i0})$, $i=1,2$, respectively. Since we are dealing with the periodic domain, using the same argument
as in \cite{PZ11}, we can obtain the following estimates (however, without any smallness assumption on $\xi$):
\begin{align}
& \frac{d}{dt}\Big(\|u_1-u_2\|_{L^2}^2+\lambda L\|\nabla(Q_1-Q_2)\|_{L^2}^2+\lambda\|Q_1-Q_2\|_{L^2}^2\Big)\non\\
& \ \ +\nu\|\nabla(u_1-u_2)\|_{L^2}^2+\lambda \Gamma L^2\|\Delta (Q_1-Q_2)\|_{L^2}^2 \non\\
\leq \ & h(t)\Big(\|u_1-u_2\|_{L^2}^2+\lambda L\|\nabla(Q_1-Q_2)\|_{L^2}^2+\lambda\|Q_1-Q_2\|_{L^2}^2\Big),
\end{align}
where $h(t)\in L^1(0,T)$ is a time-integrable function. As a
consequence, we have
\begin{eqnarray}
&& \| (u_1-u_2)(t)\|_{L^2}^2+\|(Q_1-Q_2)(t)\|_{H^1}^2 \non\\
&& \ \ +\int_0^t(\|\nabla
 (u_1-u_2)(s)\|_{L^2}^2+\|\Delta(Q_1-Q_2)(s)\|_{L^2}^2) ds \non\\
 &\leq&  Ce^{\int_0^t h(s)ds}\left(\|u_{01}-u_{02}\|_{L^2}^2+\|Q_{01}-Q_{02}\|_{H^1}^2\right),\quad \forall\, t\in(0,T).\label{continuous}
\end{eqnarray}
Therefore, the global strong solution to the problem
\eqref{navier-stokes}-\eqref{IC} is unique.

The proof of Theorem \ref{strong2d} is complete.

\begin{remark}
It seems impossible to prove any continuous dependence results on
initial data for the strong solutions obtained above in the space $\mathbf{V}\times
H^2$. Nevertheless, the estimate \eqref{continuous} implies that for any $(u_0,
Q_0)\in \mathbf{V}\times H^2$, we are able to define a \emph{closed
semigroup} $\Sigma(t)$ for  $t\geq 0$ (in the sense of \cite{PZ07})
by setting $(u(t),Q(t))=\Sigma(t)(u_0, Q_0)$,  where $(u,Q)$ is the global
strong solution to the problem \eqref{navier-stokes}-\eqref{IC}.
\end{remark}

\section{Long-time Behavior}
\setcounter{equation}{0}

In this section we investigate the long-time behavior of the global
strong solution to problem \eqref{navier-stokes}-\eqref{IC}
established in Theorem \ref{strong2d}. The related study consists of
two steps. First, we prove that the asymptotic limit point of the global
strong solution $(u(t), Q(t))$ as $t$ tends to infinity is unique.
Then we provide an uniform estimate of the convergence rate.

\subsection{Characterization of $\omega$-limit set}

For any initial datum $(u_0, Q_0)\in \mathbf{V}\times H^2\big(\T^2, S_0^{(2)}\big)$,
we denote its $\omega$-limit set by
\begin{align*}
\omega(u_0, Q_0)\defeq\big\{(u_\infty, Q_\infty)| \;\exists
\;\{t_n\} \nearrow \infty\,:\, u(t_n)\rightarrow
u_\infty \mbox{ in } L^2, Q(t_n)\rightarrow Q_\infty \mbox{ in } H^1
\mbox{ as } n\rightarrow \infty \big\}.
\end{align*}
On the other hand, we denote the set of solutions to the elliptic problem
$$
  \mathfrak{S} = \big\{Q_*: L\Delta{Q}_*-aQ_*-c\tr(Q_*^2)Q_*=0,\ \ Q_*\in S_0^{(2)}\ \text{and}\  Q_*(x+e_i)=Q_*(x) \mbox{ in } \T^2\big\}.
$$
\begin{remark}
Since the free energy $\mathcal{F}(Q)$ given by \eqref{elastic energy} is bounded from below, using the classical variational method and the elliptic regularity theorem, it is easy to see that the set $\mathfrak{S}$ is non-empty. Besides, every $Q_*\in \mathfrak{S}$ is a critical point of $\mathcal{F}(Q)$.
\end{remark}
Next, by virtue of the properties of the $\omega$-limit set
$\omega(u_0, Q_0)$ as well as the higher-order energy term $\A(t)$, we have
\begin{lemma} \label{lemma on decay property}
Suppose that the assumptions in Theorem \ref{strong2d} are
satisfied. For any initial datum $(u_0, Q_0)\in \mathbf{V}\times H^2\big(\T^2, S_0^{(2)}\big)$, we have $\omega(u_0, Q_0)$ is
a nonempty bounded subset in $\mathbf{V}\times H^2\big(\T^2, S_0^{(2)}\big)$ which satisfies
$$
  \omega(u_0, Q_0) \in \big\{(0, Q_*): Q_*\in \mathfrak{S}\big\}
$$
and the total energy $\E(t)$ is a constant on $\omega(u_0, Q_0)$. Besides, the unique global strong solution $(u, Q)$ has the following decay property:
\begin{equation}
\displaystyle\lim_{t\rightarrow +\infty}\big(\|u(t)\|_{H^1}+\|H(Q(t))\|_{L^2}\big)=0.\label{decayu}
\end{equation}
\end{lemma}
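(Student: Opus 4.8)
The plan is to deduce every assertion from the integrability of $\A(t)$, the higher-order inequality \eqref{hiA}, and the uniform-in-time bounds of Theorem \ref{strong2d}. First I would integrate the basic energy law \eqref{basic energy law}; since $\E(t)$ is non-increasing and bounded from below (by \eqref{below}), this gives $\int_0^{+\infty}\big(\nu\|\nabla u\|_{L^2}^2+\lambda\Gamma\|H(Q)\|_{L^2}^2\big)\,d\tau<+\infty$, and comparing with the definition \eqref{def of A} yields $\A\in L^1(0,+\infty)$. Moreover, Theorem \ref{strong2d} provides $\|u(t)\|_{H^1}+\|Q(t)\|_{H^2}\leq C$ for all $t\geq 0$; recalling \eqref{def of H2d} and controlling $\|Q\|_{L^\infty}$ through Agmon's inequality \eqref{AG}, this produces a uniform bound $\A(t)\leq C$.

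The central step is to upgrade $\A\in L^1(0,+\infty)$ into $\A(t)\to 0$. Because $\A$ is uniformly bounded, the entire bracket on the right-hand side of \eqref{hiA} is bounded by a constant, so after discarding the nonnegative dissipation terms $\frac{\nu}{2}\|\Delta u\|_{L^2}^2+\frac{\lambda\Gamma}{2}\|\nabla H\|_{L^2}^2$ we obtain the differential inequality $\frac{d}{dt}\A(t)\leq C\A(t)$, whose right-hand side is itself an $L^1(0,+\infty)$ function. I would then invoke the elementary fact that a nonnegative $f\in L^1(0,+\infty)$ whose derivative satisfies $f'\leq g$ with $g\in L^1(0,+\infty)$ must tend to zero at infinity: if $f(t_n)\geq\varepsilon$ along $t_n\to\infty$, then once the tail $\int_{t_n}^{+\infty}g$ is smaller than $\varepsilon/2$ the function $f$ stays $\geq\varepsilon/2$ on a fixed-length interval after $t_n$, contradicting $f\in L^1$. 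This forces $\A(t)\to 0$, hence $\|\nabla u(t)\|_{L^2}\to 0$ and $\|H(Q(t))\|_{L^2}\to 0$. Since $u(t)\in\mathbf{V}\subset\dot{H}^1$, Poincar\'e's inequality promotes the former to $\|u(t)\|_{H^1}\to 0$, which is exactly \eqref{decayu}.

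For the structure of $\omega(u_0,Q_0)$ I would use the uniform bound $\|u(t)\|_{H^1}+\|Q(t)\|_{H^2}\leq C$ together with the compact embeddings $H^2(\T^2)\hookrightarrow\hookrightarrow H^1(\T^2)\hookrightarrow\hookrightarrow L^2(\T^2)$. Given any $t_n\to\infty$, a subsequence satisfies $u(t_n)\rightharpoonup u_\infty$ in $\mathbf{V}$ and $Q(t_n)\rightharpoonup Q_\infty$ in $H^2$, with strong convergence $u(t_n)\to u_\infty$ in $L^2$ and $Q(t_n)\to Q_\infty$ in $H^1$ and, by $H^2(\T^2)\hookrightarrow\hookrightarrow C(\T^2)$, also in $C(\T^2)$. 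Hence $\omega(u_0,Q_0)$ is nonempty, and weak lower semicontinuity of the norms gives boundedness in $\mathbf{V}\times H^2$. The decay \eqref{decayu} forces $u_\infty=0$. To identify $Q_\infty\in\mathfrak{S}$ I would pass to the limit in $H(Q(t_n))=L\Delta Q(t_n)-aQ(t_n)-cQ(t_n)\tr(Q(t_n)^2)$ tested against an arbitrary smooth $\phi$: the weak $L^2$-convergence $\Delta Q(t_n)\rightharpoonup\Delta Q_\infty$ handles the linear principal term while the uniform convergence of $Q(t_n)$ handles the cubic nonlinearity; since $\|H(Q(t_n))\|_{L^2}\to 0$, the limit reads $(H(Q_\infty),\phi)=0$ for all $\phi$, so $H(Q_\infty)=0$ and $Q_\infty\in\mathfrak{S}$.

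Finally, for the constancy of $\E$ on $\omega(u_0,Q_0)$, the energy law makes $\E(t)$ non-increasing and bounded below, so $\E(t)\searrow\E_\infty$ for some constant. Along any $t_n\to\infty$ converging to $(0,Q_\infty)$ one has $\|u(t_n)\|_{L^2}^2\to 0$, $\|\nabla Q(t_n)\|_{L^2}^2\to\|\nabla Q_\infty\|_{L^2}^2$ by $H^1$-convergence, and $\int_{\T^2}f_B(Q(t_n))\to\int_{\T^2}f_B(Q_\infty)$ by the uniform convergence of $Q(t_n)$; hence $\E(u(t_n),Q(t_n))\to\lambda\F(Q_\infty)=\E(0,Q_\infty)$, forcing $\E(0,Q_\infty)=\E_\infty$ independently of the limit point. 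The main obstacle is precisely the implication $\A\in L^1\Rightarrow\A\to 0$: it is here that the uniform-in-time bound of Theorem \ref{strong2d} is indispensable, since without it the double-logarithmic factor in \eqref{hiA} could not be absorbed into a constant and the simple comparison $\frac{d}{dt}\A\leq C\A$ would fail.
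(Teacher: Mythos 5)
Your proposal follows essentially the same route as the paper: integrate the basic energy law to get $\A\in L^1(0,+\infty)$, use the uniform-in-time bound to collapse the right-hand side of \eqref{hiA} into a harmless differential inequality, conclude $\A(t)\to 0$, and then characterize $\omega(u_0,Q_0)$ by standard compactness. (The paper deduces $\frac{d}{dt}\A\leq C$ rather than your $\frac{d}{dt}\A\leq C\A$, and it is terser on the $\omega$-limit identification, but these are inessential differences; your identification of $Q_\infty\in\mathfrak{S}$ and the constancy of $\E$ are correct.)

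There is, however, one slip in your justification of the elementary lemma ``$f\geq 0$, $f\in L^1$, $f'\leq g\in L^1$ $\Rightarrow$ $f\to 0$''. You claim that once the tail of $g$ is small, $f$ stays $\geq \varepsilon/2$ on an interval \emph{after} $t_n$. This is false as written: the hypothesis $f'\leq g$ is only an upper bound on the derivative, so $f$ may plummet arbitrarily fast immediately after $t_n$; nothing prevents $f(t_n+0^+)$ from being near zero. The upper bound on $f'$ instead prevents $f$ from having \emph{risen} quickly, so the correct argument looks backward in time: choose $T$ with $\int_T^{+\infty}g\,ds<\varepsilon/2$; then for any $t_n>T$ and any $s\in[T,t_n]$,
\begin{equation*}
f(s)\;\geq\; f(t_n)-\int_s^{t_n}f'(\tau)\,d\tau \;\geq\; f(t_n)-\int_s^{t_n}g(\tau)\,d\tau\;\geq\;\varepsilon-\frac{\varepsilon}{2}\;=\;\frac{\varepsilon}{2},
\end{equation*}
and letting $t_n\to+\infty$ gives $f\geq\varepsilon/2$ on all of $[T,+\infty)$, contradicting $f\in L^1$. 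The fact you invoke is true and the fix is one line, so the overall proof stands; the same elementary fact (with $g\equiv C$) is what the paper uses implicitly when it combines \eqref{lipA} with the integrability of $\A$.
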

\begin{proof}
Since the global strong solution $(u, Q)$ obtained in Theorem
\ref{strong2d} satisfies the higher-order energy inequality
\eqref{hiA}, using a similar argument as in Section 4.2, we get
\begin{equation}
\mathcal{A}(t)\leq C,\quad \forall\, t\geq 0,\label{esAA}
\end{equation}
where $C>0$ depends on $\|u_0\|_{H^1}, \|Q_0\|_{H^2}, L, \lambda, \nu, \Gamma, a, c, \xi$ and $\T^2$.
As a consequence, it follows from \eqref{hiA} and \eqref{esAA} that
\begin{equation}
\frac{d}{dt}\A(t) \leq C, \quad \forall\, t\geq 0.\label{lipA}
\end{equation}
On the other hand, the energy identity \eqref{es1} for $(u, Q)$ yields that
\begin{equation} \label{K0}
\int_0^{+\infty} \int_{\T^2} \nu |\nabla{u}|^2+\lambda \Gamma |H(Q)|^2\,dxdt \leq K_0,
\end{equation}
where $K_0>0$ is a constant that only depends on $\|u_0\|_{L^2}, \|Q_0\|_{H^1}, L, \lambda, a, c, \T^2$.
This implies that $\int_0^{+\infty} \A(t)\,dt <+\infty$, which together with \eqref{lipA} leads to the conclusion  \eqref{decayu}.

Since the total energy $\E(t)$ is non-increasing in time and bounded from below by a generic constant, there exists a constant $\F_\infty\in \RR$ such that
\begin{equation}
\lim_{t\to+\infty}\mathcal{E}(t)=\F_\infty.\label{limE}
\end{equation}
By the definition of $\omega(u_0, Q_0)$, it is easy to see that $\E(t)$ is equal to the constant $\F_{\infty}$ on the set $\omega(u_0, Q_0)$. The proof is complete.
\end{proof}

\subsection{Convergence to equilibrium}

In general, we cannot directly conclude that each global strong solution of
system \eqref{navier-stokes}-\eqref{IC} converges to a single
equilibrium as $t\to +\infty$ because the set of steady states
$\mathfrak{S}$ for $Q$-tensors can have a complicated structure.
Besides, since we are working in the periodic torus $\T^2$, we may
expect the dimension of the set $\mathfrak{S}$ to be at least $2$.
However, we may establish a gradient inequality of \L ojasiewicz-Simon type for this matrix valued function
$Q$ and apply Simon's idea (see \cite{FS,S83}) to accomplish our goal.

To begin with, using \eqref{lower order bound} and \eqref{esAA}, we
have the following uniform-in-time estimates
\begin{equation}
\|u(t)\|_{H^1}+\|Q(t)\|_{H^2} \leq C, \quad \forall\, t\geq 0.\label{uniES}
\end{equation}
Then, from Lemma \ref{lemma on decay property} we infer that there exists an
increasing unbounded sequence $\{t_n\}_{n\in N}$ and a matrix function $Q_\infty\in
H^2\big(\T^2, S_0^{(2)}\big)$, such that
\begin{equation} \label{sequential convergence}
 \displaystyle\lim_{t_n\rightarrow +\infty}\big\|Q(t_n)-Q_\infty\big\|_{H^1}=0,
\end{equation}
where $(0, Q_\infty) \in \omega(u_0, Q_0)$.

We now proceed to prove the convergence of $Q(t)$ to $Q_\infty$ for all time as $t\to+\infty$, which implies that the
$\omega$-limit set $\omega(u_0, Q_0)$ is actually a singleton. For this purpose, the following \L ojaciewicz-Simon type
inequality plays an important role.
\begin{lemma}\label{lemma on LS inequality}
 Let $Q_\ast\in H^1(\T^2, S_0^{(2)})$ be a critical point of the energy functional $\F(Q)$. Then there exist some
constants $\theta\in (0, \frac12)$ and $\beta>0$ depending on $Q_\ast$, such that for any  $Q\in H^1(\T^2, S_0^{(2)})$ satisfying
$\|Q-Q_\ast\|_{H^1}<\beta$, we have
\begin{equation} \label{LS inequality}
\big\|L\Delta{Q}-aQ-c\tr(Q^2)Q\big\|_{(H^1)'} \geq
|\F(Q)-\F(Q_\ast)|^{1-\theta}.
\end{equation}
Here, $(H^1(\T^2, S_0^{(2)}))'$ is the dual space of $H^1(\T^2, S_0^{(2)})$.
\end{lemma}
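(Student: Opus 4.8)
The plan is to establish the Łojasiewicz--Simon inequality \eqref{LS inequality} by reducing it to the abstract gradient-inequality framework of Simon \cite{S83}, adapted to the tensor-valued, periodic setting. The key observation is that $\F(Q)$ is an analytic functional on $H^1(\T^2, S_0^{(2)})$: the gradient term $\frac{L}{2}\|\nabla Q\|_{L^2}^2$ is quadratic, hence analytic, while the bulk term $f_B(Q)$ is a polynomial in the entries of $Q$ (recall that in $2D$ the cubic term vanishes, so $f_B(Q)=\frac{a}{2}\tr(Q^2)+\frac{c}{4}\tr^2(Q^2)$), so $\int_{\T^2} f_B(Q)\,dx$ is likewise analytic on $H^1$ by the Sobolev embedding $H^1(\T^2)\hookrightarrow L^p$ for every finite $p$. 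The negative gradient $-\partial\F/\partial Q = L\Delta Q - aQ - c\tr(Q^2)Q$ is exactly the left-hand side quantity appearing in \eqref{LS inequality}, so the statement is precisely the Łojasiewicz inequality for the analytic functional $\F$ near its critical point $Q_\ast$, measured in the dual norm $(H^1)'$.

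First I would set up the functional-analytic frame: take the Hilbert triple $H^1(\T^2, S_0^{(2)}) \hookrightarrow L^2(\T^2, S_0^{(2)}) \hookrightarrow (H^1(\T^2, S_0^{(2)}))'$ and regard $\F'(Q) \in (H^1)'$ as the Fréchet derivative. I would then verify the two structural hypotheses of the abstract Łojasiewicz--Simon theorem (see \cite{FS,S83}): (i) $\F$ is analytic in a neighborhood of $Q_\ast$ in $H^1$, which follows from the polynomial/quadratic structure noted above together with the analyticity of the Nemytskii operator $Q\mapsto \tr(Q^2)Q$ on $H^1$; and (ii) the linearized operator $\F''(Q_\ast): H^1 \to (H^1)'$ is a Fredholm operator of index zero. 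For (ii), the second variation has principal part $-L\Delta$ plus lower-order (compact) perturbations $aQ + c[\tr(Q^2)Q]'$; since $-L\Delta + \mathrm{Id}$ is an isomorphism from $H^1$ onto $(H^1)'$ on the torus and the remaining terms are relatively compact by the compact embedding $H^1 \hookrightarrow L^2$, the operator $\F''(Q_\ast)$ is a compact perturbation of an isomorphism, hence Fredholm of index zero.

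The technical heart is the standard Lyapunov--Schmidt reduction: let $P$ denote the orthogonal projection onto $N=\ker \F''(Q_\ast)$, which is finite-dimensional by the Fredholm property. One solves the infinite-dimensional part of the equation $\F'(Q)=0$ via the implicit function theorem (using analyticity to keep the solution map analytic), reducing the problem to a finite-dimensional analytic function $\Phi(\zeta)=\F(Q_\ast + \zeta + \psi(\zeta))$ on $N$. The classical finite-dimensional Łojasiewicz inequality for the analytic function $\Phi$ then furnishes exponents $\theta\in(0,\tfrac12)$ and constants $\beta,c_0>0$ with $|\Phi(\zeta)-\Phi(0)|^{1-\theta}\le c_0\,|\nabla\Phi(\zeta)|$, and transferring this back through the reduction yields \eqref{LS inequality} with the dual norm on the left, possibly after shrinking $\beta$. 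One subtlety specific to this problem that I would flag explicitly: the argument must respect the constraint that $Q$ stays in the space $S_0^{(2)}$ of symmetric traceless matrices, so the variational derivative, the linearization, and all projections are to be taken within $S_0^{(2)}$-valued function spaces. I expect the main obstacle to be the careful verification of analyticity and the Fredholm property of $\F''(Q_\ast)$ in the correct tensorial dual pairing, rather than the abstract reduction, which is by now routine; once those two hypotheses are checked, the conclusion follows from the established machinery and I would cite \cite{S83} and \cite{FS} for the reduction itself rather than reproducing it in full.
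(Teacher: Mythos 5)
Your proposal is correct, but it takes a genuinely different route from the paper. The paper sidesteps all functional-analytic verification by exploiting the two-dimensional structure of $S_0^{(2)}$: writing $Q=\left(\begin{smallmatrix} p & q\\ q & -p\end{smallmatrix}\right)$ identifies $H^1(\T^2,S_0^{(2)})$ with the unconstrained vector-valued space $H^1(\T^2,\RR^2)$, under which the energy becomes $\tilde\F(p,q)=\int_{\T^2}\left[L(|\nabla p|^2+|\nabla q|^2)+a(p^2+q^2)+c(p^2+q^2)^2\right]dx$, and then it quotes the {\L}ojasiewicz--Simon inequality for vector-valued functions from \cite{H06} verbatim. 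You instead stay in the matrix-valued space and verify the hypotheses of the abstract theorem yourself: analyticity of $\F$ (correct, since $\F$ is polynomial in $Q$ and $H^1(\T^2)\hookrightarrow L^p$ for every finite $p$), and the Fredholm-index-zero property of $\F''(Q_\ast)$ as a compact perturbation of the isomorphism $-L\Delta+\mathrm{Id}\colon H^1\to(H^1)'$ (also correct; note $Q_\ast$ is smooth by elliptic bootstrapping, so the multiplication operators are bounded and the compact embedding argument goes through), before citing \cite{S83,FS} for the Lyapunov--Schmidt reduction. Both proofs ultimately defer to established machinery; yours is more robust, since it would apply whenever the energy is analytic on the relevant space --- for instance to $S_0^{(3)}\cong\RR^5$ with the cubic term present in $f_B$, where no such simple two-scalar parametrization is available --- while the paper's is shorter and needs no verification at all. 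One comment: the ``subtlety'' you flag about respecting the constraint is not actually an obstacle, and for precisely the reason that makes the paper's trick work: the constraints of symmetry and tracelessness are pointwise \emph{linear}, so $H^1(\T^2,S_0^{(2)})$ is itself a Hilbert space to which the abstract framework applies directly. Finally, as you note, the constant $c_0$ produced by the abstract result versus the constant-free form of \eqref{LS inequality} is reconciled by shrinking $\beta$ or $\theta$, exactly as the paper implicitly does when passing from the vector-valued inequality back to the matrix form.
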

\begin{proof}
If $Q\in S_0^{(2)}$, then it can be written into the following form
\begin{equation}
  Q(x)=\left(
      \begin{array}{cc}
        p(x) & q(x) \\
        q(x) & -p(x) \\
      \end{array}
    \right),\label{Qvector}
\end{equation}
where $p, q$ are two scalar functions defined on $\T^2$.
Now we introduce the vector $\tilde{Q}: \T^2\to  \RR^2$ defined by
$$
   \tilde{Q}=\left(
               \begin{array}{c}
                 p \\
                 q \\
               \end{array}
             \right).
$$
By direct computations, we see that
$$
 \tilde{\mathcal{F}}(\tilde{Q})=\tilde{\mathcal{F}}(p, q)\defeq
  \mathcal{F}(Q)=\int_{\T^2}\left[L(|\nabla{p}|^2+|\nabla{q}|^2)+a(p^2+q^2)+c(p^2+q^2)^2\right]dx.
$$
Then the corresponding Fr\'echet derivative of $\tilde{\F}$ with
respect to $\tilde{Q}$ in $L^2$ is given by
$$
  \frac{\delta\tilde{\F}}{\delta\tilde{Q}}=\left(
  \begin{array}{c}
  \frac{\delta\tilde{\F}}{\delta{p}} \\
  \frac{\delta\tilde{\F}}{\delta{q}} \\
  \end{array}
  \right)
=-2\left(
     \begin{array}{c}
       L\Delta{p}-ap-2c(p^2+q^2)p \\
       L\Delta{q}-aq-2c(p^2+q^2)q \\
     \end{array}
   \right)
$$
Let $\tilde{Q}_\ast=\left(
\begin{array}{c}
p_\ast \\
q_\ast \\
\end{array}
\right)$
be a critical point of $\F(\tilde{Q})$. Correspondingly, we can easily verify that
 $Q_\ast=\left(
 \begin{array}{cc}
 p_\ast & q_\ast \\
 q_\ast & -p_\ast \\
 \end{array}
 \right)$ is a critical point of $\F(Q)$. Then, applying the \L ojaciewisz-Simon inequality for vector valued functions
derived in \cite{H06}, we conclude that there exist some constants $\theta\in (0, \frac12)$ and $\beta>0$ depending on $\tilde{Q}_\ast$
(and thus $Q_\ast$), such that the following inequality holds
\begin{equation}\label{LS inequality vector form}
\Big\|\frac{\delta\tilde{\F}}{\delta\tilde{Q}}\Big\|_{(H^1(\T^2))'}
\geq
\big|\tilde{\F}(\tilde{Q})-\tilde{\F}(\tilde{Q_\ast})\big|^{1-\theta},
\end{equation}
for any $\tilde{Q}\in H^1(\T^2, \RR^2)$, provided that $\|\tilde{Q}-\tilde{Q_\ast}\|_{H^1}<\frac{\beta}{2}$.
 Therefore, our conclusion \eqref{LS inequality} is an immediate consequence of the inequality \eqref{LS inequality vector form}. The proof is complete.
\end{proof}
\begin{remark}
Lemma \ref{lemma on LS inequality} can be considered as an extended
version for matrix valued functions of Simon's result in \cite{S83}
for scalar functions. In the present case, there are two constraints
(i.e., matrix symmetry and trace free) imposed on $Q\in S_0^{(2)}$,
which might bring extra difficulties in the proof. However, due to
the special structure of the $Q$-tensor in two dimensional case
\eqref{Qvector}, the possible difficulties can be avoided by
reducing the problem to the vector case that has been treated in the
literature.
\end{remark}

The convergence of the order parameter $Q(t)$ can be proved by adapting the argument in \cite{FS} for parabolic equations,
which relies on the following analysis lemma (see e.g., \cite[Lemma 7.1]{FS})
\begin{lemma}\label{f}
Let $\theta\in(0,\frac{1}{2})$. Assume that $Z(t)\geq 0$ be a measurable function on $(0, +\infty)$, $Z(t)\in L^2(0, +\infty)$ and there exist $C>0$ and
$t_0\geq 0$ such that
\be \nonumber \int_t^{\infty} Z^2(s)ds\leq CZ(t)^{\frac{1}{1-\theta}},\quad\text{for a.e.   }t\geq t_0.\ee
Then $Z(t)\in L^1(t_0,+\infty).$
\end{lemma}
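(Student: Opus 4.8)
The plan is to analyze the tail function $G(t)\defeq\int_t^{\infty}Z^2(s)\,ds$, which is finite since $Z\in L^2(0,+\infty)$. Because $Z^2\in L^1$, the function $G$ is absolutely continuous, non-increasing, satisfies $G'(t)=-Z^2(t)$ for a.e.\ $t$, and $G(t)\to 0$ as $t\to+\infty$. If $G(t_1)=0$ for some $t_1\ge t_0$, then $Z=0$ a.e.\ on $[t_1,+\infty)$ and the conclusion is immediate after bounding the finite piece $\int_{t_0}^{t_1}Z$ by Cauchy--Schwarz; hence I may assume $G(t)>0$ for all $t$.

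First I would convert the hypothesis into a differential inequality. The assumption $G(t)\le C\,Z(t)^{1/(1-\theta)}$ gives the pointwise lower bound $Z(t)\ge C^{-(1-\theta)}G(t)^{1-\theta}$, so that
$$-G'(t)=Z^2(t)\ge C^{-2(1-\theta)}\,G(t)^{2(1-\theta)}\qquad\text{for a.e. }t\ge t_0.$$
Writing $\alpha\defeq 2(1-\theta)$ and $c_0\defeq C^{-2(1-\theta)}$, and observing that $\theta\in(0,\tfrac12)$ forces $\alpha\in(1,2)$, I would differentiate $G^{1-\alpha}$: since $G$ is positive and absolutely continuous, $\frac{d}{dt}G^{1-\alpha}=(\alpha-1)G^{-\alpha}(-G')\ge(\alpha-1)c_0$ a.e. Integrating from $t_0$ to $t$ yields $G(t)^{-(\alpha-1)}\ge(\alpha-1)c_0(t-t_0)$, i.e.\ the polynomial decay
$$G(t)\le \big[(\alpha-1)c_0\big]^{-\frac{1}{\alpha-1}}(t-t_0)^{-\frac{1}{1-2\theta}},\qquad t>t_0,$$
where I used $\alpha-1=1-2\theta$.

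Finally I would recover the $L^1$ bound by a dyadic decomposition. On any interval $[a,b]\subset[t_0,+\infty)$, Cauchy--Schwarz gives $\int_a^b Z\le (b-a)^{1/2}\big(\int_a^b Z^2\big)^{1/2}\le (b-a)^{1/2}G(a)^{1/2}$. Taking the intervals $I_k=[2^k,2^{k+1}]$ for $k\ge k_0$ with $2^{k_0}\ge\max\{1,2t_0\}$, so that $2^k-t_0\ge 2^{k-1}$, and inserting the decay estimate for $G(2^k)$, each contribution is controlled by a constant times $2^{k/2}\,2^{-k\gamma/2}=2^{k(1-\gamma)/2}$ with $\gamma\defeq 1/(1-2\theta)$. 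Summing over $k$ and adding the finite remainder $\int_{t_0}^{2^{k_0}}Z$ then gives $Z\in L^1(t_0,+\infty)$.

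The decisive point — and the step I expect to be the real obstacle — is that the whole scheme succeeds only because $\theta<\tfrac12$: this is precisely what makes $\alpha>1$ (so the differential inequality integrates to genuine decay rather than a mere bound) and simultaneously makes the decay exponent $\gamma=1/(1-2\theta)$ strictly larger than $1$, which is exactly the threshold needed for the geometric-type series $\sum_k 2^{k(1-\gamma)/2}$ to converge. Some care is also needed with the degenerate case $G\equiv 0$ on a tail and with the fact that all the pointwise inequalities hold only almost everywhere, but these are routine once the decay estimate for $G$ is established.
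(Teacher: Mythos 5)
Your proof is correct. Note that the paper itself gives no proof of this lemma: it is quoted verbatim from the reference [FS, Lemma 7.1] (Feireisl--Simondon), and your argument --- passing to the tail function $G(t)=\int_t^\infty Z^2(s)\,ds$, converting the hypothesis into the differential inequality $-G'\ge c_0 G^{2(1-\theta)}$, integrating to get the decay $G(t)\le C(t-t_0)^{-1/(1-2\theta)}$, and then summing $\int_{2^k}^{2^{k+1}}Z\le 2^{k/2}G(2^k)^{1/2}$ over dyadic blocks using $1/(1-2\theta)>1$ --- is essentially that standard proof, carried out correctly and with the degenerate case $G(t_1)=0$ and the a.e./chain-rule issues properly handled.
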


To this end, by Lemma  \ref{lemma on LS inequality}, for each element $(0,Q_\infty)\in\omega(u_0, Q_0)$, there exist some constants
$\beta_{Q_\infty}>0$ and $\theta_{Q_\infty}\in(0,\frac{1}{2})$ such that the inequality \eqref{LS inequality} holds for
\begin{equation}
Q\in \textbf{B}_{\beta_{Q_\infty}}(Q_\infty):=\Big\{Q\in H^1(\T^2,
S^{(2)}_0),\ \ \|Q-Q_\infty\|_{H^1}<\beta_{Q_\infty}\Big\}.\non
\end{equation}
The union of  balls $\{0\}\times \{\textbf{B}_{\beta_{Q_\infty}}(Q_\infty):\ (0,Q_\infty)\in\omega(u_0, Q_0)\}$ forms an open cover of $\omega(u_0, Q_0)$. Due to the compactness of $\omega(u_0, Q_0)$ in $H^1$ (see Lemma \ref{lemma on decay property}), there exists a \emph{finite }sub-cover $\{0\}\times \{\textbf{B}_{\beta_i}(Q_\infty^i):i=1,2,...,m\}$ of $\omega(u_0, Q_0)$ in $H^1$, where the constants $\beta_i, \theta_i$ corresponding to the limit point $Q_\infty^i$ (and thus a critical point of $\F(Q)$) in Lemma \ref{lemma on LS inequality} are indexed by $i$. From the definition of $\omega(u_0, Q_0)$, there exists a sufficient large $t_0>>1$ such that the global strong solution $Q(t)$ satisfies
\begin{equation}
\nonumber Q(t)\in\mathcal{U}:=\bigcup_{i=1}^m\textbf{B}_{\beta_i}(Q_\infty^i), \quad \text{for}\;\; t\geq t_0.
\end{equation}
Taking $\theta=\min_{i=1}^m\{\theta_i\}\in(0,\frac{1}{2})$, using Lemma \ref{lemma on LS inequality} and convergence of the total energy
$\mathcal{E}(t)$ (see \eqref{limE}),  we deduce, for all $t\geq t_0$,
\begin{align}
\big|\F(Q(t))-\F_\infty \big|^{1-\theta} &\leq   \big\|L\Delta{Q}(t)-aQ(t)-c\tr(Q(t)^2)Q(t)\big\|_{(H^1)'}\non\\
&\leq \|H(Q(t))\|_{L^2}.\label{ls1}
\end{align}
Therefore, we have
\begin{align}
\big( \E(t) -\F_\infty \big)^{1-\theta}&\leq \left(\frac12\|u(t)\|_{L^2}^2+\big|\F(Q(t))-\F(Q_\infty)\big|
\right)^{1-\theta} \non\\
&\leq  \Big(\frac12\|u\|_{L^2}^2+\|H(Q(t))\|_{L^2}^{\frac{1}{1-\theta}} \Big)^{1-\theta}   \non\\
&\leq C\|u(t)\|_{L^2}^{2(1-\theta)}+C\|H(Q(t))\|_{L^2} \non\\
&\leq C(\|u(t)\|_{L^2}+\|H(Q(t))\|_{L^2})\non\\
&\leq C\mathcal{A}^\frac12(t),\quad \forall\, t\geq t_0,\label{EFE}
\end{align}
in which we use the fact  $0<\theta<\frac12$ and the uniform estimate \eqref{uniES}.

On the other hand, it follows from the energy inequality \eqref{basic energy law} that
\be
 \E(t)-\F_{\infty}\geq \min\{\nu, \Gamma\}\int_{t}^{\infty}\mathcal{A}(s)ds,\quad \forall\,t\geq t_0.\label{eee0}
\ee
 As a consequence,
\bea
\int_t^{\infty}\mathcal{A}(s) ds
&\leq&  C\mathcal{A}(t)^{\frac{1}{2(1-\theta)}},\quad\forall\, t\geq t_0.\label{z}
\eea
Taking $Z(t)=\mathcal{A}(t)^\frac12$, from \eqref{z} and Lemma \ref{f} we conclude that
\be\label{z1}
\int_{t_0}^{+\infty}(\|\nabla u(t)\|_{L^2}+\| H(Q(t))\|_{L^2})dt\leq
\int_{t_0}^{+\infty} \mathcal{A}(t)^\frac12 dt <+\infty.
\ee
Then, by using the equation \eqref{Q equ} for $Q$, the uniform bounds on $\|u(t)\|_{H^1}$, $\|Q(t)\|_{H^2}$ and the Sobolev embedding theorem ($d=2$), we have
\begin{align}
\int_{t_0}^{\infty}\|Q_t(t)\|_{L^2}\,dt
&\leq \int_{t_0}^{\infty}\big(\|u\cdot\nabla{Q}\|_{L^2}+\|S(\nabla{u},
Q)\|_{L^2}+\Gamma\|H(Q)\|_{L^2}\big)\,dt \non\\
&\leq
C\int_{t_0}^{\infty}\Big[\|u\|_{L^4}\|\nabla{Q}\|_{L^4}+\|\nabla{u}\|_{L^2}\big(\|Q\|_{L^\infty}^2+1\big)
+\|H(Q)\|_{L^2}\Big]\,d\tau \non\\
&\leq C\int_{t_0}^{\infty}\big( \|\nabla{u}(t)\|_{L^2}+\|H(Q)(t)\|_{L^2}\big)\,dt
\non\\
&< +\infty,\label{inttQ}
\end{align}
which indicates that $Q(t)$ converges in $L^2(\T^2)$ for all $t\rightarrow +\infty$. Combining the sequential convergence result \eqref{sequential convergence}, it is easy to check that
\begin{equation}\label{L2 convergence}
\displaystyle\lim_{t\rightarrow+\infty}\|Q(t)-Q_\infty\|_{L^2}=0.
\end{equation}
Next, by the uniform bound on $\|Q(t)\|_{H^2}$ (see \eqref{uniES}) and \eqref{L2 convergence}, from the standard interpolation we obtain that
\begin{equation}\label{H1 convergence}
\displaystyle\lim_{t\rightarrow+\infty}\|Q(t)-Q_\infty\|_{H^1}=0.
\end{equation}
Finally, observing the following fact
\begin{align*}
\|\Delta{Q}-\Delta{Q}_\infty\|&\leq
\frac1L\|H(Q)-H(Q_\infty)\|+\frac1L\big\|aQ+c\tr(Q^2)Q-aQ_\infty-c\tr(Q_\infty^2)Q_\infty\big\|\non\\
&\leq \frac1L\|H(Q)\|+C\|Q-Q_\infty\|_{H^1},
\end{align*}
we further deduce from Lemma \ref{lemma on decay property} and \eqref{H1 convergence} that
\begin{equation}\label{H2 convergence}
\displaystyle\lim_{t\rightarrow+\infty}\|Q(t)-Q_\infty\|_{H^2}=0.
\end{equation}

\subsection{Convergence rate} In what follows, we derive uniform estimates on the convergence rate. First, the rate on
lower-order norm $\|Q(t)-Q_\infty\|_{L^2}$ follows from the \L
ojasiewicz-Simon approach (cf. \cite{HJ01}). We infer from the basic
energy law \eqref{basic energy law}, \eqref{limE} and \eqref{EFE}
that \be
 \frac{d}{dt}(\mathcal{E}(t)-\F_\infty)^\theta + C (\|\nabla{u}\|_{L^2}+\|H(Q)\|_{L^2})\leq 0, \quad \forall\, t\geq  t_0,\label{dt}
 \ee
 and
 \be
 \frac{d}{dt}(\mathcal{E}(t)-\F_\infty)+ C(\mathcal{E}(t)-\F_\infty)^{2(1-\theta)}\leq 0, \quad \forall\, t\geq  t_0.\label{ly4}
 \ee
As a consequence of \eqref{ly4}, we can deduce the rate on energy decay:
 $$ 0\leq \mathcal{E}(t)-\F_\infty \leq
 C(1+t)^{-\frac{1}{1-2\theta}},\quad \forall\, t\geq
 t_0. $$
 Then similar to \eqref{inttQ}, on $(t,+\infty)$, where $t\geq t_0$,  it follows from \eqref{dt} that
 \begin{align}
\int_{t}^{\infty}\|Q_t(s)\|_{L^2}\,ds
&\leq C\int_{t}^{\infty}\big( \|\nabla{u}(s)\|_{L^2}+\|H(Q)(s)\|_{L^2}\big)\,ds\non\\
&\leq (\mathcal{E}(t)-\F_\infty )^\theta \non\\
&\leq  C(1+t)^{-\frac{\theta}{1-2\theta}},\quad \forall\, t\geq
 t_0,
\end{align}
which further implies
 \be
    \|Q(t)-Q_\infty\|\leq C(1+t)^{-\frac{\theta}{1-2\theta}}, \quad \forall\, t\geq 0.\label{rate1}
 \ee

Higher-order estimates on the convergence rate can be achieved by constructing proper differential inequalities via
a suitable energy method (see e.g., \cite{W10} for the simplified liquid crystal system). The key idea relies on the use of the basic energy law \eqref{basic energy law} and the higher-order energy inequality \eqref{hiA}.

It follows from Lemma \ref{lemma on decay property} that the limit system of problem \eqref{navier-stokes}--\eqref{IC} takes the following form
 \begin{align}
 & \nabla P_\infty=-\lambda \nabla \cdot(\nabla Q_\infty\odot \nabla Q_\infty), \quad x\in \T^2,\label{s1a}\\
 & H(Q_\infty)=0,\quad x\in \T^2,\label{s2a}
 \end{align}
subject to periodic boundary conditions. Subtracting the
stationary problem \eqref{s1a}--\eqref{s2a} from the evolution
problem \eqref{navier-stokes}-\eqref{IC}, then testing the velocity equation by $u$ and the $Q$-equation by $-\lambda (H(Q)-H(Q_\infty))$,
respectively, adding the results together and integrating over $\T^2$, we can infer from \eqref{basic energy law} that
 \bea
  && \frac{d}{dt} \left(\frac12\|u\|_{L^2}^2+\frac{\lambda L}{2}\|\nabla Q-\nabla
  Q_\infty\|_{L^2}^2+\lambda \int_{\T^2} [f_B(Q)-f_B(Q_\infty)- f_B'(Q_\infty)(Q-Q_\infty)]dx\right)\non\\
  && \quad + \nu\|\nabla u\|_{L^2}^2+ \lambda \Gamma \|H(Q)\|_{L^2}^2\non\\
  &=& -\lambda L\int_{\T^2}u_i\nabla_j(\nabla_i (Q_\infty)_{kl}\nabla_j(Q_\infty)_{kl}) dx \non\\
  &=& -\lambda \int_{\T^2}u_i\nabla_i (Q_\infty)_{kl} (L\Delta (Q_\infty)_{kl}-[f_B'(Q_\infty)]_{kl})dx -\lambda \int_{T^2}u_i\nabla_i (Q_\infty)_{kl} [f_B'(Q_\infty)]_{kl} dx\non\\
  && -\frac{\lambda L}{2} \int_{\T^2}u_i\nabla_i|Q_\infty|^2 dx\non\\
  &=&0,
  \label{ra1}
  \eea
  where $f_B'(Q)=aQ+cQ\tr(Q^2)$.  On the other hand, testing the equation \eqref{Q equ} by $Q-Q_\infty$, from the uniform estimate \eqref{uniES},
  the H\"older inequality and the Sobolev embedding theorem ($d=2$) we conclude that
 \begin{align}
 & \frac12\frac{d}{dt}\|Q-Q_\infty\|_{L^2}^2+\Gamma L \| \nabla (Q-Q_\infty)\|_{L^2}^2\non\\
 =\ & \int_{\T^2} [-u\cdot\nabla  Q+ S(\nabla u, Q)]:( Q-Q_\infty)dx \non\\
 &- \Gamma \int_{\T^2} (f_B'(Q)-f_B'(Q_\infty)): (Q-Q_\infty) dx\non\\
 \leq \ & C\|u\|_{L^4}\|\nabla Q\|_{L^4}\|Q-Q_\infty\|_{L^2}+C\|u\|_{L^2}(\|Q\|_{L^\infty}^2+1)\|Q-Q_\infty\|_{L^2}\non\\
 & +C\int_{\T^2} \int_0^1 f_B''(sQ+(1-s)Q_\infty)(Q-Q_\infty): (Q-Q_\infty) dsdx\non\\
 \leq  \ & \epsilon_1 \|\nabla u\|_{L^2}^2+C_1\|Q-Q_\infty\|_{L^2}^2. \label{ra3}
 \end{align}
 Multiplying \eqref{ra3} by $\mu>0$ and adding the resultant to \eqref{ra1}, we get
 \begin{align}
  & \frac{d}{dt} \mathcal{Y}(t)+\left(\nu-\mu\epsilon_1\right)\|\nabla u\|_{L^2}^2  + \lambda \Gamma\|H(Q)\|_{L^2}^2 +\mu \Gamma L\|\nabla(Q-Q_\infty)\|_{L^2}^2  \non\\
 \leq\  & C_1\mu \|Q-Q_\infty\|_{L^2}^2,
 \label{ra5}
 \end{align}
 where
 \begin{align}
 \mathcal{Y}(t)&=  \frac12\|u(t)\|_{L^2}^2+\frac{\lambda L}{2}\|\nabla Q(t)-\nabla
  Q_\infty\|_{L^2}^2+  \frac{\mu}{2}\|Q(t)-Q_\infty\|_{L^2}^2\non\\
  & + \lambda \int_{\T^2} [f_B(Q(t))-f_B(Q_\infty)- f_B'(Q_\infty)(Q(t)-Q_\infty)]dx, \quad \forall\, t\geq 0. \label{y1}
  \end{align}
It follows from the Newton-Leibinz formula and \eqref{uniES} that
 \begin{align}
    & \left\vert\int_{\T^2} [f_B(Q)-f_B(Q_\infty)-f'_B(Q_\infty):(Q-Q_\infty)] dx\right\vert\non\\
 =\ & \left\vert\int_{\T^2} \int_0^1s\int_0^1f''_B(\rho(s Q+(1-s)Q_\infty)+(1-\rho)Q_\infty)(Q-Q_\infty):(Q-Q_\infty)d\rho ds dx\right\vert\non\\
 \leq\ & \|f''_B\|_{L^\infty}\|Q-Q_\infty\|_{L^2}^2 \non\\
 \leq\ & C_2\|Q-Q_\infty\|_{L^2}^2. \label{rate6}
  \end{align}
Thus we can choose $ \mu \geq  2+2\lambda C_2>0$ to see that there exists a constant $k_1>k_2>0$,
  \be k_1(\|u(t)\|_{L^2}^2+\|Q(t)-Q_\infty\|_{H^1}^2)\geq  \mathcal{Y}(t)\geq k_2(\|u(t)\|_{L^2}^2+\|Q(t)-Q_\infty\|_{H^1}^2).
 \label{y1a}
  \ee
 We now take $\epsilon_1=\frac{\nu}{2\mu}$ in \eqref{ra5}. It follows from \eqref{y1a} that there exist some
 constants $C_3,C_4>0$ such that
  \be
  \frac{d}{dt} \mathcal{Y}(t)+C_3 [\mathcal{Y}(t)+\A(t)] \leq C_4\|Q(t)-Q_\infty\|_{L^2}^2,\label{ra7}
  \ee
  which together with \eqref{rate1} yields
  \be
  \mathcal{Y}(t)\leq C(1+t)^{-\frac{2\theta}{1-2\theta}}, \quad \forall\,t\geq 0.\label{ratey}
  \ee
 In view of \eqref{y1a}, we thus obtain
  \be
   \|u(t)\|_{L^2}+\|Q(t)-Q_\infty\|_{H^1}\leq C(1+t)^{-\frac{\theta}{1-2\theta}}, \quad \forall\, t\geq 0.\label{rate2}
  \ee

At last, from the higher-order energy inequality \eqref{hiA} and the uniform estimate \eqref{uniES}, it follows that
 \be
 \frac{d}{dt}\A(t) \leq C_5\A(t).  \label{simplified higher-order inequality}
 \ee
Multiplying \eqref{simplified higher-order inequality} with
$\alpha=\frac{C_3}{2C_5}$ and adding the resultant with \eqref{ra7}, we deduce
 \be \frac{d}{dt}[\mathcal{Y}(t)+\alpha\A(t)]+C_6[\mathcal{Y}(t)+\alpha \A(t) ] \leq
C(1+t)^{-\frac{2\theta}{1-2\theta}}. \label{ddecaA}
 \ee
 As a consequence,
 \be
 \mathcal{Y}(t)+\alpha \A(t) \leq C(1+t)^{-\frac{2\theta}{1-2\theta}}, \ \ \ \ \ \forall\, t \geq 0,\non
 \ee
which together with the fact  $\mathcal{Y}(t) \geq 0$ (see \eqref{y1a}) yields
   \be
   \A(t)\leq C(1+t)^{-\frac{2\theta}{1-2\theta}}, \quad \forall\, t\geq 0.\label{rate3a}
  \ee
Then from the definition of $\A(t)$ and estimates \eqref{rate2}, \eqref{rate3a}, we can see that
 \be
 \|\nabla u(t)\|_{L^2}+ \|\Delta Q(t)-\Delta Q_\infty\|_{L^2} \leq C(1+t)^{-\frac{\theta}{1-2\theta}}, \quad \forall\, t\geq 0.\label{rate4a}
  \ee
Collecting the estimates \eqref{rate2} and \eqref{rate4a}, we arrive at the conclusion \eqref{covrate}.

The proof of Theorem \ref{long-time} is complete.


\section{Appendix}
\setcounter{equation}{0}
The following calculations hold for both two and three dimensional cases.
\begin{lemma}
Suppose $d=2, 3$. Let $(u, Q)$ be a smooth solution to the problem \eqref{navier-stokes}-\eqref{IC}. Define the quantity
\begin{equation}
\A(t)=\|\nabla u\|_{L^2}^2+\lambda \|H(Q)\|_{L^2}^2.\label{A1}
\end{equation}
Then we have the following equality
\begin{eqnarray}
&&\frac12\frac{d}{dt}\A(t)+ \nu\|\Delta{u}\|_{L^2}^2+\lambda \Gamma\|\nabla H\|_{L^2}^2\non\\
& =&\int_{\T^d}\big(u\cdot\nabla{u},\Delta{u}\big)\,dx
-2\lambda\int_{\T^d} \nabla_l u_k\nabla_l\nabla_kQ_{ij} H_{ij} dx
+\frac{\lambda}{L}\int_{\T^d} u_k\nabla_k F_{ij} H_{ij} dx\non\\
&& - 2\lambda\int_{\T^d} \nabla_j u_i(\nabla_lQ_{kj}\nabla_lH_{ik}-\nabla_l Q_{ik}\nabla_lH_{kj}) dx
-\lambda\int_{\T^d} \nabla_j u_i(\Delta Q_{kj}H_{ik}-\Delta Q_{ik}H_{kj})dx\non\\
&& +\lambda\xi\int_{\T^d}\big(D\Delta{Q}+\Delta{Q}D):H dx
+ 4\lambda\xi \int_{\T^d} \nabla_lD_{ik}\nabla_lQ_{kj}H_{ij}\,dx\non\\
&& -2\lambda\xi\int_{\T^d} \Delta\big(Q_{kl}Q_{ji}\big)\nabla_ju_iH_{kl}\,dx
-4\lambda\xi\int_{\T^d}\nabla_m\big(Q_{kl}Q_{ji}\big)\nabla_m\nabla_ju_iH_{kl}\,dx\non\\
&& -\lambda\int_{\T^d}\frac{\partial{F(Q)}}{\partial Q} (u\cdot \nabla Q) : Hdx
+\lambda\int_{\T^d}\frac{\partial{F(Q)}}{\partial Q}S(\nabla u, Q): H dx\non\\
&& +\lambda\Gamma \int_{\T^d}\frac{\partial{F(Q)}}{\partial Q} H : H dx\non\\
&:=& \sum_{i=1}^{12} J_i,\label{DAT}
\end{eqnarray}
where
\begin{equation}
F(Q)=-aQ+b\left(Q^2-\frac1d\mathrm{tr}(Q^2)\mathbb{I}\right)-cQ\tr(Q^2)=-\frac{\partial f_B(Q)}{\partial Q}-\frac{b}{d}\mathrm{tr}(Q^2)\mathbb{I}.\label{def of F}
\end{equation}
\end{lemma}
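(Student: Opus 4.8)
The plan is to differentiate $\A(t)=\|\nabla u\|_{L^2}^2+\lambda\|H(Q)\|_{L^2}^2$ in time and treat the two pieces separately, reproducing the cancellations of the basic energy law (Lemma \ref{prop on basic energy law}) one derivative higher. For the kinetic part I would take the $L^2$ inner product of the momentum equation \eqref{navier-stokes} with $-\Delta u$. Integrating by parts (justified by periodicity) turns $\int\nabla u:\nabla u_t$ into $-\int u_t\cdot\Delta u$, sends the viscous term to $+\nu\|\Delta u\|_{L^2}^2$, and kills the pressure term since $\nabla\cdot\Delta u=\Delta(\nabla\cdot u)=0$. This isolates $J_1=\int(u\cdot\nabla u,\Delta u)\,dx$ together with the stress contribution $-\lambda\int[\nabla\cdot(\tau+\sigma)]\cdot\Delta u\,dx$, which I would retain for later cancellation against the $Q$-side.

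For the free-energy part I would write $\tfrac{\lambda}{2}\tfrac{d}{dt}\|H\|_{L^2}^2=\lambda\int H:\partial_t H$ and expand $\partial_t H=L\Delta Q_t+\tfrac{\partial F}{\partial Q}[Q_t]$ using $H=L\Delta Q+F(Q)$ with $F$ as in \eqref{def of F}, then substitute $Q_t=\Gamma H-u\cdot\nabla Q+S(\nabla u,Q)$ from \eqref{Q equ} so that the expression splits along the three forcing terms. The $\Gamma H$ term, after integrating $\int H:\Delta H$ by parts, yields the good dissipative term proportional to $\|\nabla H\|_{L^2}^2$ (to be moved to the left-hand side) plus the purely algebraic remainder $J_{12}=\lambda\Gamma\int\tfrac{\partial F}{\partial Q}H:H\,dx$. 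For the transport term I would use the commutator identity $\Delta(u\cdot\nabla Q)=u\cdot\nabla\Delta Q+2\nabla_l u_k\nabla_l\nabla_k Q+\Delta u_k\nabla_k Q$ and the chain rule $u\cdot\nabla F=\tfrac{\partial F}{\partial Q}[u\cdot\nabla Q]$, exploiting the incompressibility cancellation $\int H:(u\cdot\nabla H)=\tfrac12\int u\cdot\nabla|H|^2=0$; the genuinely second-order commutators that survive, rewritten via $L\Delta Q=H-F$ (which is where the $1/L$ and the mixed structure enter), feed $J_2$ and $J_3$.

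The core of the argument is the $S$-term together with the stress divergence. The part $\tfrac{\partial F}{\partial Q}[S]$ produces $J_{11}=\lambda\int\tfrac{\partial F}{\partial Q}S:H\,dx$ directly, while the $L\Delta S$ part must be played off against $-\lambda\int[\nabla\cdot(\tau+\sigma)]\cdot\Delta u$ after inserting the explicit forms \eqref{S1}, \eqref{symmetic tensor}, \eqref{skew-symmetric tensor} and integrating by parts. Here I expect the higher-order analogue of the energy-law cancellation: the skew-symmetric stress $\sigma=QH-HQ$ cancels against the rotational ($\Omega$) part of $S$, leaving the commutators $J_4,J_5$; the symmetric stress $\tau$ cancels against the $\xi D$ and the $\tr(Q\nabla u)$ parts of $S$, leaving the $\xi$-weighted commutators $J_6,\dots,J_9$. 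The symmetry and tracelessness of $Q$ and $H$, together with the self-adjointness of the operator $\partial_Q H=L\Delta(\cdot)+\tfrac{\partial F}{\partial Q}[\cdot]$, are precisely what make these pairings close.

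The main obstacle is not conceptual but organizational: there is no single decisive step, only a long chain of integrations by parts in which the cancellation between $\nabla\cdot(\tau+\sigma)$, $S$ and $\Gamma H$ must be executed exactly, tracking every index contraction and confirming that all boundary terms vanish by periodicity. The delicate point is bookkeeping discipline — ensuring that after the aligned pieces cancel, precisely the twelve advertised remainders $J_1,\dots,J_{12}$ persist and nothing extraneous is left behind.
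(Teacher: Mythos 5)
Your proposal is correct and follows essentially the same route as the paper's Appendix: test \eqref{navier-stokes} with $-\Delta u$ (which produces $J_1$ plus the retained stress contribution), write $\partial_t H=L\Delta Q_t+\frac{\partial F}{\partial Q}[Q_t]$, move the Laplacian onto $H$, substitute \eqref{Q equ}, extract the dissipation and $J_{12}$ from the $\Gamma H$ part, and close the argument through exactly the cancellations you anticipate between $\sigma$ and the $\Omega$-part of $S$ (leaving $J_4,J_5$) and between the $\xi$-weighted stresses and the $\xi D$-, $\tr(Q\nabla u)$-parts of $S$ (leaving $J_6,\dots,J_9$). The one bookkeeping slip is your attribution of \emph{all} of $\tau$ to cancellation against $S$: the non-$\xi$ Ericksen piece $-L\nabla Q\odot\nabla Q$ in \eqref{symmetic tensor} does not pair with $S$ at all, but rather — after using $\nabla\cdot u=0$ and $\nabla\tr(Q)=0$ to reduce its divergence tested against $\Delta u$ to $\lambda\int_{\T^d}\nabla_iQ_{kl}H_{kl}\Delta u_i\,dx$ (the paper's $I_{3a}$, \eqref{A-3a}) — it cancels against the term $-\lambda\int_{\T^d}\Delta u_k\nabla_kQ_{ij}H_{ij}\,dx$ that your own commutator identity produces from the transport term $u\cdot\nabla Q$ (the paper's $I_{4a}$ in \eqref{A-4-2}, cancellation (b)); this is precisely why only $J_2$ and $J_3$ survive from the advection.
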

\begin{proof}
Using the equations \eqref{navier-stokes}, \eqref{Q equ} and integration by parts, we have
\begin{eqnarray}
&& \frac12\frac{d}{dt}\|\nabla u\|_{L^2}^2+ \nu\|\Delta{u}\|_{L^2}^2\non\\
&=& \int_{\T^d} (u\cdot\nabla{u},\Delta{u})\,dx-\lambda\int_{\T^d}(\nabla\cdot\sigma,
\Delta{u})\,dx-\lambda\int_{\T^d}(\nabla\cdot\tau, \Delta{u})\,dx\non\\
&:=& I_1+I_2+I_3,
\end{eqnarray}
where
\begin{equation}
I_2= -\lambda\int_{\T^d}\nabla_j(Q_{ik}H_{kj}-H_{ik}Q_{kj})\Delta{u}_i\,dx,\label{A-2}
\end{equation}
and
\begin{align}\label{A-3}
I_3&=\lambda L\int_{\T^d} \nabla_j(\nabla_iQ_{kl}\nabla_j{Q}_{kl})\Delta{u}_i\,dx
+\lambda\xi\int_{\T^d}\nabla_j\Big(Q_{ik}H_{kj}+H_{ik}Q_{kj}+\frac{2}{d}H_{ij}\Big)\Delta{u}_i\,dx\non\\
&\quad-2\lambda\xi\int_{\T^d} \nabla_j\big(Q_{kl}H_{kl}Q_{ij}+\frac{1}{d} Q_{kl}H_{kl}\delta_{ij}\big)\Delta{u}_i\,dx\non\\
&:=I_{3a}+I_{3b}+I_{3c}.
\end{align}
As in \cite[A.3]{ADL15}, using the incompressibility condition $\nabla\cdot{u}=0$ , the definition of $F(Q)$ (see \eqref{def of F}) and the fact $\nabla \mathrm{tr}(Q)=0$, we get
\begin{eqnarray}
I_{3a}&=& \lambda \int_{\T^d} \nabla_iQ_{kl}(H_{kl}-F_{kl})\Delta{u}_i\,dx+ \lambda L\int_{\T^d} \nabla_j\nabla_iQ_{kl}\nabla_j{Q}_{kl}\Delta{u}_i\,dx\non\\
&=&  \lambda \int_{\T^d} \nabla_iQ_{kl}H_{kl}\Delta{u}_i\,dx+\lambda \int_{\T^d} \nabla f_B(Q)\cdot  \Delta{u}\,dx + \frac{\lambda b}{d}\int_{\T^d} \mathrm{tr}(Q^2)\nabla \mathrm{tr}(Q) \cdot \Delta u dx   \non\\
&& + \frac{\lambda L}{2}\int_{\T^d} (\nabla |\nabla Q|^2) \cdot \Delta{u}\,dx\non\\
&=& \lambda \int_{\T^d} \nabla_iQ_{kl}H_{kl}\Delta{u}_i\,dx.\label{A-3a}
\end{eqnarray}
Using the symmetry of $Q$ and $H(Q)$, and the basic algebra for arbitrary matrices $A, B, C \in \RR^{d\times d}$
\begin{equation}
(AB):C=B:(A^TC)=A:(CB^T),\label{ABC}
\end{equation}
 we have
\begin{eqnarray}
I_{3b}&=& -\frac{2\lambda \xi}{d}\int_{\T^d}H_{ij}\nabla_j\Delta u_i\,dx
-\lambda \xi\int_{\T^d}(Q_{ik}H_{kj}+H_{ik}Q_{kj})\nabla_j\Delta u_i\,dx\non\\
&=&   -\frac{2\lambda \xi}{d}\int_{\T^d}H_{ij}\Delta{D}_{ij}
\,dx
-\lambda \xi\int_{\T^d}Q_{ik}H_{kj}\Delta{D}_{ij}\,dx\non\\
&&-\lambda \xi\int_{\T^d}H_{ik}Q_{kj}\Delta{D}_{ij}\,dx \non\\
&=&-\lambda \xi\int_{\T^d} \big(\Delta{D}Q+Q\Delta{D} +\frac{2}{d} \Delta{D}\big): H\,dx.\label{I-3b}
\end{eqnarray}
By the incompressibility condition $\nabla\cdot{u}=0$, it holds
\begin{eqnarray}
I_{3c}&=& -2\lambda \xi\int_{\T^d} \nabla_j\big(Q_{kl}H_{kl}Q_{ij})\Delta u_idx-\frac{2\lambda \xi}{d}\int_{\T^d}\nabla_i(Q_{kl} H_{kl})\Delta{u}_i\,dx
\non\\
&=& -2\lambda \xi\int_{\T^d} \nabla_j\big(Q_{kl}H_{kl}Q_{ij})\Delta u_idx.\label{I-3c}
\end{eqnarray}
On the other hand, we have
\begin{align}\label{A-1}
&\quad \frac{\lambda}{2}\frac{d}{dt}\|H(Q)\|_{L^2}^2+\lambda \Gamma\|\nabla H(Q)\|_{L^2}^2\non\\
&=\lambda L\int_{\T^d}(\Delta{Q}_t : H(Q))\,dx+ \lambda \int_{\T^d} \partial_t F(Q) : H(Q) \,dx+ \lambda \Gamma\|\nabla H(Q)\|_{L^2}^2\non\\
&=\lambda L\int_{\T^d}({Q}_t : \Delta H(Q))\,dx+ \lambda \int_{\T^d} \partial_t F(Q) : H(Q) \,dx+ \lambda \Gamma\|\nabla H(Q)\|_{L^2}^2\non\\
&= -\lambda \int_{\T^d} u_k\nabla_k Q_{ij}\Delta H_{ij}dx
+ \lambda \int_{\T^d} S(\nabla u, Q):\Delta H dx
+ \lambda \int_{\T^d} \partial_t F(Q) : H(Q) \,dx\non\\
&:= I_4+I_5+I_6.
\end{align}
Then by the incompressibility condition $\nabla\cdot{u}=0$ and \eqref{Q equ} we have
\begin{eqnarray}
I_4
&=& -\lambda \int_{\T^d} u_k\nabla_k Q_{ij}\Delta H_{ij}dx\non\\
&=& \lambda \int_{\T^d} (\nabla_l u_k\nabla_k Q_{ij}+u_k\nabla_l\nabla_k Q_{ij}) \nabla_l H_{ij}dx\non\\
&=& -\lambda \int_{T^d} \Delta u_k \nabla_k Q_{ij} H_{ij} dx
-2\lambda \int_{\T^d} \nabla_l u_k\nabla_l\nabla_kQ_{ij} H_{ij} dx\non\\
&& -\frac{\lambda }{L}\int_{\T^d} u_k\nabla_k(H_{ij}-F_{ij})H_{ij} dx\non\\
&=& -\lambda \int_{\T^d} \Delta u_k \nabla_k Q_{ij} H_{ij} dx
-2\lambda \int_{\T^d} \nabla_l u_k\nabla_l\nabla_kQ_{ij} H_{ij} dx\non\\
&& +\frac{\lambda }{L}\int_{\T^d} u_k\nabla_k F_{ij} H_{ij} dx\non\\
&:=& I_{4a}+I_{4b}+I_{4c}.\label{A-4-2}
\end{eqnarray}
\begin{eqnarray}
I_5 &=& \lambda \int_{\T^d}(\xi D+\Omega)\big(Q+\frac1d \Id
\big)+\big(Q+\frac1d \Id \big)(\xi D-\Omega)-2\xi\big(Q+\frac1d \Id
\big)\tr(Q\nabla{u}):\Delta H dx\non\\
&=& \lambda \int_{\T^d} (\Omega Q-Q\Omega):\Delta Hdx
+ \lambda \xi\int_{\T^d} \big(DQ+QD+\frac{2}{d} D\big):\Delta H dx\non\\
&& -2\lambda \xi\int_{\T^d}\mathrm{tr}(Q\nabla u)\big(Q+\frac{1}{d}\mathbb{I}\big):\Delta H dx\non\\
&:=& I_{5a}+I_{5b}+I_{5c}.\non
\end{eqnarray}
Using the symmetry properties of $Q$, $H$ and \eqref{ABC}, after integration by parts, it is easy to check that
\begin{eqnarray}
I_{5a}&=&\lambda \int_{\T^d} (\Omega{Q}-Q\Omega): \Delta H  dx\non\\
&=& \frac{\lambda}{2} \int_{\T^d}(\nabla u Q-\nabla^T u Q-Q\nabla u+Q\nabla^T u):\Delta H dx\non\\
&=& \lambda \int_{\T^d} \nabla u: (\Delta H Q-Q\Delta H) dx\non\\
&=& \lambda \int_{\T^d}\Delta(\nabla_j u_iQ_{kj})H_{ik}\,dx
-\lambda \int_{\T^d}\Delta(\nabla_j u_iQ_{ik})H_{kj}\,dx\non\\
&=& \lambda \int_{\T^d} \Delta \nabla_j u_i(Q_{kj}H_{ik}-Q_{ik}H_{kj}) dx
+ 2\lambda \int_{\T^d} \nabla_l\nabla_j u_i(\nabla_lQ_{kj}H_{ik}-\nabla_l Q_{ik}H_{kj}) dx\non\\
&& +\lambda \int_{\T^d} \nabla_j u_i(\Delta Q_{kj}H_{ik}-\Delta Q_{ik}H_{kj})dx\non\\
&=& \lambda \int_{\T^d} \nabla_j(Q_{ik}H_{kj}-Q_{kj}H_{ik}) \Delta  u_i  dx
- 2\lambda \int_{\T^d} \nabla_j u_i(\nabla_lQ_{kj}\nabla_lH_{ik}-\nabla_l Q_{ik}\nabla_lH_{kj}) dx\non\\
&& -\lambda \int_{\T^d} \nabla_j u_i(\Delta Q_{kj}H_{ik}-\Delta Q_{ik}H_{kj})dx.\label{I-5a}
\end{eqnarray}
Moreover, using integration by parts, we have
\begin{align}\label{A-4-4}
I_{5b} &= \lambda \xi\int_{\T^d} \Delta \big(DQ+QD+\frac{2}{d} D\big):H dx\non\\
&=\lambda \xi\int_{\T^d} \big(\Delta{D}Q+Q\Delta{D} +\frac{2}{d} \Delta{D}\big): H\,dx\non\\
&\quad +\lambda \xi\int_{\T^d}\big(D\Delta{Q}+\Delta{Q}D):H dx+ 4\lambda \xi \int_{\T^d} \nabla_lD_{ik}\nabla_lQ_{kj}H_{ij}\,dx.
\end{align}
Next, for $I_{5c}$, using the property $\mathrm{tr}(H(Q))=0$ and after integration by parts, we obtain that
\begin{align}\label{A-4-5}
I_{5c}&=-2\lambda \xi\int_{\T^d} \Delta\big[\tr(Q\nabla{u})Q\big]: H\,dx\non\\
&=-2\lambda \xi\int_{\T^d} \Delta\big(Q_{kl}Q_{ji}\nabla_ju_{i} \big)H_{kl}\,dx\non\\
&=-2\lambda \xi\int_{\T^d}{Q}_{kl}Q_{ji}\nabla_j\Delta{u}_{i}H_{kl}\,dx
   -2\lambda \xi\int_{\T^d} \Delta\big(Q_{kl}Q_{ji}\big)\nabla_ju_iH_{kl}\,dx\non\\
&\quad-4\lambda \xi\int_{\T^d}\nabla_m\big(Q_{kl}Q_{ji}\big)\nabla_m\nabla_ju_iH_{kl}\,dx\non\\
&=2\lambda \xi\int_{\T^d}\nabla_j\big(Q_{kl}H_{kl}Q_{ji}\big)\Delta{u}_{i}\,dx
   -2\lambda \xi\int_{\T^d} \Delta\big(Q_{kl}Q_{ji}\big)\nabla_ju_iH_{kl}\,dx\non\\
&\quad-4\lambda \xi\int_{\T^d}\nabla_m\big(Q_{kl}Q_{ji}\big)\nabla_m\nabla_ju_iH_{kl}\,dx.
\end{align}
Finally, using the equation \eqref{Q equ}, the term $I_6$ can be expressed as follows
\begin{eqnarray}
I_6&=& \lambda \int_{\T^d} \frac{\partial{F(Q)}}{{\partial Q}}\partial_t Q : H(Q) \,dx\non\\
   &=& \lambda \int_{\T^d} \frac{\partial{F(Q)}}{\partial Q} (-u\cdot \nabla Q+ S(\nabla u, Q)+\Gamma H(Q)) : H(Q) dx.\non
\end{eqnarray}
In summary, we find the following special cancellations between those highly nonlinear terms:
\begin{itemize}
\item[(a)] the term $I_2$ (i.e., \eqref{A-2}) cancels with the first term in \eqref{I-5a},
\item[(b)] the term $I_{3a}$ (i.e., \eqref{A-3a}) cancels with the term $I_{4a}$ in \eqref{A-4-2},
\item[(c)] the term $I_{3b}$ (i.e., \eqref{I-3b}) cancels with the first term in \eqref{A-4-4},
\item[(d)] the term $I_{3c}$ (i.e., \eqref{I-3c}) cancels with the first term in \eqref{A-4-5}.
\end{itemize}
Taking into account the above cancellation relations, we can easily conclude \eqref{DAT}.
\end{proof}
\begin{remark}
We note that in the above cancellations, the relation (a) is the same as for the simpler case with $\xi=0$ (see e.g., \cite{ADL15}). However, for the general case $\xi\neq 0$, we have found extra relations (b)-(d) between higher-order nonlinear terms of the full Navier-Stokes and Q-tensor system \eqref{navier-stokes}-\eqref{IC}.
\end{remark}

\bigskip

\noindent \textbf{Acknowledgements.}
C. Cavaterra and E. Rocca are supported by the FP7-IDEAS-ERC-StG \#256872 (EntroPhase)
and by GNAMPA (Gruppo Nazionale per l'Analisi Matematica, la Probabilit\`a
e le loro Applicazioni) of INdAM (Istituto Nazionale di Alta
Matematica).
H. Wu is partially supported by NNSFC
under the grant No. 11371098 and Shanghai Center for Mathematical
Sciences of Fudan University. X. Xu is supported by the
start-up fund from the Department of Mathematics and Statistics at
Old Dominion University.



\begin{thebibliography}{99}
\itemsep=0pt

\bibitem{ADL14} H. Abels, G. Dolzmann and Y.-N. Liu,
Well-posedness of a fully coupled Navier-Stokes/Q-tensor system with inhomogeneous boundary data,
SIAM J. Math. Anal., \textbf{46}, 3050-3077, 2014.

\bibitem{ADL15} H. Abels, G. Dolzmann and Y.-N. Liu,
Strong solutions for the Beris-Edwards model for nematic liquid crystals with homogeneous Dirichlet boundary conditions,
2013, arXiv:1312.5988.

\bibitem{BM10} J. Ball and A. Majumdar,
Nematic liquid crystals : from Maier-Saupe to a continuum theory,
Mol. Cryst. Liq. Cryst., \textbf{525}, 1-11, 2010.

\bibitem{BE94} A.-N. Beris and B.-J. Edwards,
\emph{Thermodynamics of flowing systems with internal microstructure},
Oxford Engineerin Science Series, \textbf{36}, Oxford university Press, Oxford, New
York, 1994.

\bibitem{BG} H. Br{\'e}zis and T. Gallouet,
Nonlinear {S}chr\"odinger evolution equations,
Nonlinear Anal., \textbf{4}(4), 677-681, 1980.


\bibitem{CR} C. Cavaterra and E. Rocca,
On a 3D isothermal model for nematic liquid crystals accounting for stretching terms,
Z. Angew. Math. Phys., {\bf 64}, 69-82, 2013.

\bibitem{CRW} C. Cavaterra, E. Rocca and H. Wu,
Global weak solution and blow-up criterion of the general Ericksen-Leslie system for nematic liquid crystal flows,
J. Differential Equations, \textbf{255}, 24-57, 2013.


\bibitem{CX} Y. Chemin and C.-J. Xu,
Inclusions de Sobolev en calcul de Weyl-H\"{o}rmander et champs de vecteurs sous-elliptiques,
Ann. Sci. \'{E}cole Norm. Sup. (4), \textbf{30}(6), 719-751, 1997.

\bibitem{CX15} X.~F. Chen and X. Xu,
Existence and uniqueness of global classical solutions of a gradient flow of the Landau-de Gennes energy,
Proc. Amer. Math. Soc., to appear, 2015.

\bibitem{DFRSS14} M.~M. Dai, E. Feireisl, E. Rocca, G. Schimperna and M. Schonbek,
On asymptotic isotropy for a hydrodynamic model of liquid crystals,
2014, arXiv:1409.7499v2.

\bibitem{DAZ} F. De Anna and A. Zarnescu,
Uniqueness of weak solutions of the full coupled Navier-Stokes and Q-tensor system in 2D,
2015, arXiv:1509.0059.

\bibitem{dG93} P.~G. de Gennes and  J. Prost,
\emph{The physics of liquid crystals},
Oxford Science Publications, Oxford, 1993.

\bibitem{FRSZ14} E. Feireisl, E. Rocca, G. Schimperna and A. Zarnescu,
Evolution of non-isothermal Landau-de Gennes nematic liquid crystals flows with singular potential,
Commun. Math. Sci., \textbf{12}, 317-343, 2014.

\bibitem{FRSZ15} E. Feireisl, E. Rocca, G. Schimperna and A. Zarnescu,
Nonisothermal nematic liquid crystal flows with the Ball-Majumdar free energy,
Annali di Mat. Pura ed App., \textbf{194}(5), 1269-1299, 2015.

\bibitem{FS} E. Feireisl and F. Simondon,
Convergence for semilinear degenerate parabolic equations in several space dimensions,
J. Dynam. Differential Equations, \textbf{12}(3), 647-673, 2000.

\bibitem{GR14} F. Guill\'en-Gonz\'alez and M.~A. Rodr\'iguez-Bellido,
Weak time regularity and uniqueness for a Q-tensor model,
SIAM J. Math. Anal., \textbf{46}, 3540-3567, 2014.

\bibitem{GR15} F. Guill\'en-Gonz\'alez and M.~A. Rodr\'iguez-Bellido,
Weak solutions for an initial-boundary Q-tensor problem related to liquid crystals,
Nonlinear Analysis, \textbf{112}, 84-104, 2015.

\bibitem{HJ01} A. Haraux and M.A. Jendoubi,
Decay estimates to equilibrium for some evolution equations with an analytic nonlinearity, Asymptot.
Anal., \textbf{26}, 21-36, 2001.

\bibitem{HD14} J.-R. Huang and S.-J. Ding,
Global well-posedness for the dynamical Q-tensor model of liquid crystals,
Sci. China Math. \textbf{58}(6), 1349-1366, 2015.

\bibitem{HLW14} J.-R. Huang, F.-H. Lin and C.-Y. Wang,
Regularity and existence of global solutions to the Ericksen--Leslie system in $\mathbb{R}^2$,
Commun. Math. Phys., \textbf{331}, 805-850, 2014.

\bibitem{H06} S.-Z. Huang,
\emph{Gradient inequalities, with applications to asymptotic behavior and stability of gradient-like systems},
Mathematical Surveys and Monographs, \textbf{126}, AMS, 2006.

\bibitem{IXZ14} G. Iyer, X. Xu and A. Zarnescu,
Dynamic cubic instability in a 2D Q-tensor model for liquid crystals,
Math. Models Methods Appl. Sci., \textbf{25}(8), 1477-1517, 2015.


\bibitem{LA1} O.~A. Ladyzhenskaya, N.A. Solonnikov and N.N.
Uraltseva, Linear and quasilinear equations of parabolic type,
Transl. Math. Monographs, \textbf{23}, AMS, 1968.

\bibitem{EL} F. Leslie, Some constitutive equations for liquid crystals, Arch. Ration. Mech. Anal., \textbf{28}, 265-283, 1968.

\bibitem{LL95} F.-H. Lin and C. Liu,
Nonparabolic dissipative system modeling the flow of liquid crystals,
Comm. Pure Appl. Math., \textbf{XLVIII}, 501-537, 1995.

\bibitem{LL01} F.-H. Lin and C.  Liu, Existence of solutions for the Ericksen-Leslie system,  Arch. Rational Mech.
Anal., \textbf{154}(2), 135-156, 2000.

\bibitem{M10} A. Majumdar,
Equilibrium order parameters of nematic liquid crystals in the Landau-de Gennes theory,
European J. Appl. Math., \textbf{21}, 181-203, 2010.

\bibitem{MZ10} A. Majumdar and A. Zarnescu,
Landau-De Gennes theory of nematic liquid crystals: the Oseen-Frank limit and beyond,
Arch. Rational Mech. Anal., \textbf{196}, 227-280, 2010.

\bibitem{Ne} J. N\u{e}cas,
\emph{Les m\'{e}thodes directes en th\'{e}orie des \'{e}quations elliptiques},
Academia, Prague, 1967.

\bibitem{MP}  M. Paicu,
\'{E}quation periodique de Navier-Stokes sans viscosit\'{e} dans une direction,
Comm. Partial Differential Equations, \textbf{30}(7-9), 1107-1140, 2005.

\bibitem{PZ11} M. Paicu and A. Zarnescu,
Global existence and regularity for the full coupled Navier-Stokes and Q-tensor system,
SIAM J. Math. Anal., \textbf{43}, 2009-2049, 2011.

\bibitem{PZ12} M. Paicu and A. Zarnescu,
Energy dissipation and regularity for a coupled Navier-Stokes and Q-tensor system,
Arch. Ration. Mech. Anal., \textbf{203}, 45-67, 2012.

\bibitem{PZ07} V. Pata and S. Zelik,
A result on the existence of global attractors for semigroups of closed operators,
Commun. Pure Appl. Anal., \textbf{6}, 481-486, 2007.

\bibitem{S83} L. Simon,
Asymptotics for a class of nonlinear evolution equation with applications to geometric problems,
Ann. of Math., \textbf{118}, 525-571, 1983.

\bibitem{SI} J. Simon,
Compact sets in the space $L^p(0,T; B)$,
Ann. Mat. Pura Appl., \textbf{146}, 65-96, 1987.

\bibitem{Te} R. Temam,
\emph{Navier-Stokes equations and nonlinear functional analysis},
SIAM, 1983.

\bibitem{Te97} R. Temam,
\emph{Infinite dimensional dynamical systems in mechanics and physics},
Springer, New York, 1997.

\bibitem{WZZ14} W. Wang, P.-W. Zhang and Z.-F. Zhang,
Well-posedness of the Ericksen-Leslie system,
Arch. Ration. Mech. Anal., \textbf{210}(3), 837-855, 2013.

\bibitem{WZZ15} W. Wang, P.-W. Zhang and Z.-F. Zhang,
Rigorous derivation from Landau-de Gennes theory to Ericksen-Leslie theory,
SIAM J. Math. Anal., \textbf{47}, 127-158, 2015.

\bibitem{W12} M. Wilkinson,
Strict physicality of global weak solutions of a Navier-Stokes Q-tensor system with singular potential,
Arch. Ration. Mech. Anal., \textbf{218}, 487-526, 2015.

\bibitem{W10} H. Wu, Long-time behavior for nonlinear hydrodynamic system modeling the nematic
liquid crystal flows,
Discrete Contin. Dyn. Syst., \textbf{26}, 379-396, 2010.

\bibitem{WXL13} H. Wu, X. Xu and C. Liu, On the general Ericksen-Leslie system: Parodi's relation, well-posedness and stability,
 Arch. Ration. Mech. Anal., \textbf{208}(1), 59-107, 2013.

\end{thebibliography}
\end{document}